\newcommand{\norm}[1]{\left\lVert#1\right\rVert}
\numberwithin{equation}{section}
\renewcommand{\arraystretch}{1.3}
\newcounter{hours}\newcounter{minutes}
\theoremstyle{plain}
\newtheorem{theorem}{Theorem}[section]
\newtheorem{lemma}{Lemma}
\newtheorem{proposition}{Proposition}
\newtheorem{definition}{Definition}
\theoremstyle{definition}                  
\newtheorem{remark}{Remark}
\def\A1{\mathcal{A}_1}
\newcommand{\R}{\mathbb{R}}	
\newcommand{\N}{\mathbb{N}}	
\newcommand{\pa}{\partial}		
\newcommand{\Div}{\textrm{div}\,}	
\newcommand{\na}{\nabla}		
\newcommand{\chf}[1]{{\raisebox{3pt}{\Large $\chi$}}_{#1}}
\newcommand{\ft}{f^{(\tau)}}
\newcommand{\tft}{\tilde{f}^{(\tau)}}
\newcommand{\red}[1]{\textcolor{red}{#1}}
\title{Non-Local Porous Media Equations with Fractional Time Derivative}
\begin{document}
\thanks{ ED is supported by the Austrian Science Fund (FWF) grants P30000 and W1245. MPG is supported by DMS-1514761 and would like to
thank NCTS Mathematics Division Taipei for their kind hospitality. NZ acknowledges support from the Alexander von Humboldt foundation.}

\author{Esther S.~Daus, Maria Pia Gualdani, Jingjing Xu, Nicola Zamponi, Xinyu Zhang}
\address{Institute of Analysis and Scientific Computing, TU Wien, Wiedner Hauptstraße 8–10,1040 Wien, Austria} 
\email{esther.daus@tuwien.ac.at}
	\address{George Washington University, Mathematics Department, 801 22nd St. NW, Room 739
Washington, DC 20052}
\email{jix29@gwmail.gwu.edu zxyhxz@gwmail.gwu.edu}
\address{The University of Texas at Austin
Mathematics Department RLM 8.100
2515 Speedway Stop C1200
Austin, Texas 78712-1202}
\email{gualdani@math.utexas.edu }
\address{University of Mannheim, School of Business Informatics and Mathematics, B6, 28, 68159 Mannheim (Germany)}
\email{nzamponi@mail.uni-mannheim.de}

\date{\today}

\begin{abstract}
In this paper we investigate existence of solutions for the system: 
\begin{equation*}
\left\{
\begin{array}{l}
D^{\alpha}_tu=\textrm{div}(u \nabla p),\\
D^{\alpha}_tp=-(-\Delta)^{s}p+u^{2},
\end{array}
\right.
\end{equation*}
in $\mathbb{T}^3$ for $0< s \leq 1$, and $0< \alpha \le 1$. The term $D^\alpha_t u$ denotes the Caputo derivative, which models memory effects in time.  The fractional Laplacian $(-\Delta)^{s}$ represents the L\'{e}vy diffusion. 
We prove global existence of nonnegative weak solutions that satisfy a variational inequality. The proof uses several approximations steps, including an implicit Euler time discretization. We show that the proposed discrete Caputo derivative satisfies several important properties, including positivity preserving, convexity and rigorous convergence towards  the continuous Caputo derivative. Most importantly, we give a strong compactness criteria for piecewise constant functions, in the spirit of Aubin-Lions theorem, based on bounds of the discrete Caputo derivative. 



\end{abstract}

\maketitle

\baselineskip=14pt
\pagestyle{headings}		

\markboth{ }{ }
\section{Introduction}

In this manuscript we study existence of weak solutions to the following system: 
\begin{equation}\label{1L}
\left\{
\begin{array}{l}
D^{\alpha}_tu=\textrm{div}(u \nabla p),\\
D^{\alpha}_tp=-(-\Delta)^{s}p+u^{2},\ 0<s\leq 1, 
\end{array}
\right.
\end{equation}
where the operator $D^{\alpha}_t$ denotes the Caputo-type time derivative
$$
D^{\alpha}_t f :=\frac{1}{\Gamma_{1-\alpha}}\int_0^t\frac{f'(s)}{(t-s)^{\alpha}}\ ds, \quad 0< \alpha <1.
$$
Here $u(x,t)\ge 0$ denotes the density function and $p(x,t)\ge 0$ the pressure.  The model describes the time evolution of a density function $u$ that evolves under the nonlocal continuity equation 
$$
D^{\alpha}_t u = \Div(u {\bf{v}}),
$$
where the velocity is conservative, ${\bf{v}}=\nabla p$, and $p$ is related to $u^2$ by the inverse of the fractional fully nonlocal heat operator $D^{\alpha}_t p + (-\Delta )^s$.

System (\ref{1L}) is a non-local-in-time version of the one recently studied in \cite{caffarelli2018non}. In  \cite{caffarelli2018non} the authors proved the existence of weak solutions to 
\begin{equation}\label{CGZ20}
    \begin{cases}
      \pa_t u = \Div(u\na p),\\
      \pa_t p = -(-\Delta)^s p + u^\beta.
    \end{cases}
\end{equation}
for $x\in \R^2$, $ \frac{1}{\beta} < s < 1$ and $\beta>1$.  The literature on (\ref{CGZ20}) and his variants is quite large. See \cite{AS08, BIK15, CSV13, CV11, CV15, DGZ19, GLM00, LMG01, SV14, STV18} and references therein.

The presence of $D^{\alpha}_t$ makes our system quite different from  (\ref{CGZ20}). For example, techniques such as {a Div-Curl Lemma} do not work. The non-local structure prevents the equation from having  a comparison principle. The maximum principle does not give useful insights, since at any point of maximum for $u$ we only know that $D^{\alpha}_t u  \le u \Delta p$. We overcome these significant shortcomings with the introduction of ad-hoc regularization terms, together with suitable compact embeddings. We also provide a new strong compactness criterium for families of piecewise constant functions.

The authors are interested in understanding the effects and challenges that a non-local time derivative brings to a mathematical model. Time-delay memory effects are, in fact, very common in real situations. Specifically, differential equations with non-local time derivatives are used in modeling many physical and engineering processes, including particles in heat bath and soft matter with viscoelasticity (\cite{coleman1961foundations,del1997concepts,diethelm2010analysis,gorenflo2015time,kilbas2006theory,kubo1966fluctuation,li2018generalized,zwanzig2001nonequilibrium}).  In the past few years the study of stochastic and deterministic partial differential equations with non-local time derivatives has seen an increasing interest, see \cite{allen2016parabolic, allenCV17, allenI, allenII, bernardis2016maximum,  fractionalBurger2020, gorenflo2015time, li2018some, LiLiu2018, taylorremarks} and the references therein.  

The main result of this manuscript is summarized in the following two theorems: 
\begin{theorem}\label{thm:mainthm}
Let $u_{in}, p_{in}: \mathbb{T}^3 \rightarrow (0,+\infty )$ be functions such that $u_{in}\in L^2(\mathbb{T}^3)$, $p_{in}\in H^1(\mathbb{T}^3)$.
For  $0< s \leq 1$ and $0 < \alpha \le  1$, there exist functions $u,p : \mathbb{T}^3 \times [0,+\infty)$ such that for every $T>0$
$$u\in L^\infty(0,T;L^2(\mathbb{T}^3)), \ p\in L^2(0,T;H^{1+s}(\mathbb{T}^3)),$$
$$D^{\alpha}_t u \in L^2(0,T;(W^{1,\infty}(\mathbb{T}^3))'), \ D^{\alpha}_t p \in  L^2(0,T;(L^\infty \cap H^1)'(\mathbb{T}^3)), $$
which satisfy the following variational inequalities:
\begin{align*}
\int_{0}^{T}\int_{\text{\ensuremath{\mathbb{T}}}^{3}} \left<  D_t^{\alpha}{u}, \phi \right> \;dxdt+\int_{0}^{T}\int_{\text{\ensuremath{\text{\ensuremath{\mathbb{T}}}^{3}}}}{u^{}} \nabla p^{} \cdot \nabla\phi \;dxdt  =0,\ \forall\phi\in  L^{2}(0,T;\ W^{1,\infty} (\mathbb{T}^{3})),
\end{align*}
\begin{align*}
\int_{0}^{T}\int_{\mathbb{T}^{3}} \left<  D_t^{\alpha}p^{}, \psi  \right> \;dxdt+\int_{0}^{T}\int_{\text{\ensuremath{\mathbb{T}}}^{3}}(-\Delta)^{s/2}p^{} (-\Delta)^{s/2}\psi \;dxdt&\nonumber \\
-\int_{0}^{T}\int_{\text{\ensuremath{\mathbb{T}}}^{3}}u^{2}\psi \;dxdt & \geq 0,\ \forall\psi\in  L^{2}(0,T;\ H^{1}\cap L^\infty(\mathbb{T}^{3})),
\end{align*}
$$\lim_{t\rightarrow0^+}u(t) = u_{in} ~ \mbox{strongly in} ~ (W^{1,\infty}(\mathbb{T}^3))', \ \lim_{t\rightarrow0^+}p(t) = p_{in} ~ \mbox{strongly in} ~ (L^\infty \cap H^1)'(\mathbb{T}^3).$$
\end{theorem}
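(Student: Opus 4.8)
The plan is to construct the solution as a limit of regularized, time-discrete approximations and to extract the variational (in)equalities by lower semicontinuity in the limit. First I would regularize both the data and the equations: approximate $u_{in},p_{in}$ by smooth functions bounded away from $0$, and add vanishing regularizing terms (a small $\eps$-order dissipation in the density equation and/or lower-order coercive terms) so that, at each time level, the resulting elliptic system is uniformly coercive and the a priori estimates below close. I would then discretize time on a grid of size $\tau$, replacing $D^{\alpha}_t$ by the discrete Caputo operator, denoted $D^{\alpha}_\tau$: at each step the pair $(u^k,p^k)$ solves a coupled nonlinear elliptic system whose solvability I would obtain by a fixed-point or Galerkin argument using the coercivity provided by the regularization. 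Nonnegativity $u^k,p^k\ge 0$ would then follow from the positivity-preserving property of $D^{\alpha}_\tau$ together with a maximum-principle argument for the regularized elliptic problems.

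Second, the a priori estimates. The central tool is the discrete convexity (chain-rule) inequality for $D^{\alpha}_\tau$, which gives $\varphi'(f)\,D^{\alpha}_\tau f \ge D^{\alpha}_\tau \varphi(f)$ for convex $\varphi$, and in particular, with $\varphi(r)=r^2/2$, the discrete analogue of $f\,D^{\alpha}_t f \ge \tfrac12 D^{\alpha}_t f^2$. Testing the pressure equation with $p^k$ yields a discrete bound on $D^{\alpha}_\tau\|p\|_{L^2}^2$ together with the dissipation $\|(-\Delta)^{s/2}p\|_{L^2}^2$; testing against $(-\Delta)p$-type functions promotes the regularity to $L^2(0,T;H^{1+s})$; and testing the density equation with $u^k$ (using the $\eps$-diffusion to control $\nabla u$) yields the $L^{\infty}(0,T;L^2)$ bound for $u$. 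The delicate point is the sign-indefinite coupling term $\int u^2 p$ (equivalently $\int u^2 \Delta p$): I expect the regularization to be tailored precisely so that this term is absorbed by the dissipation via Gagliardo–Nirenberg interpolation between the $L^2$-bound on $u$ and the $H^{1+s}$-bound on $p$. Inserting these bounds back into the equations gives uniform bounds for $D^{\alpha}_\tau u$ and $D^{\alpha}_\tau p$ in $L^2(0,T;(W^{1,\infty})')$ and $L^2(0,T;(L^\infty\cap H^1)')$, respectively.

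Third, compactness. Here I would invoke the Aubin–Lions-type criterion for piecewise-constant-in-time functions announced in the abstract: the uniform bound on $D^{\alpha}_\tau p$ together with the spatial $H^{1+s}$-bound gives \emph{strong} convergence of the piecewise-constant interpolant of $p$, and hence of $\nabla p$, in $L^2(0,T;L^2(\mathbb{T}^3))$. For $u$ I would obtain only weak-$*$ convergence in $L^{\infty}(0,T;L^2)$ together with a.e.\ convergence along a subsequence. I would also use the stated convergence of $D^{\alpha}_\tau$ to the continuous Caputo derivative to identify the limiting time-derivative terms as $\tau\to 0$, after which the regularization parameter $\eps\to 0$ is removed.

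Finally, the passage to the limit accounts for the asymmetry between the two relations. In the density equation the bilinear flux $u\,\nabla p$ converges as (weakly convergent $u$) times (strongly convergent $\nabla p$), so this term passes to the limit and the $u$-relation is an \emph{equality}. In the pressure equation the only nonlinearity is the quadratic $u^2$; since $u$ converges merely weakly, a.e.\ convergence and Fatou's lemma give only $\int u^2\psi \le \liminf \int u^2\psi$ for nonnegative $\psi$, which—after passing to the limit in the linear Caputo and dissipation terms—produces exactly the stated one-sided variational inequality. The initial data are recovered in the negative-norm spaces from the bounds on the Caputo derivatives, via continuity of the Riemann–Liouville integral. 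The main obstacles I anticipate are (i) establishing and applying the piecewise-constant Aubin–Lions criterion under a \emph{fractional} time derivative, and (ii) closing the a priori estimate for the sign-indefinite coupling $\int u^2 p$ in the absence of any comparison or maximum principle.
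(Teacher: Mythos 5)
Your overall architecture matches the paper's (discrete Caputo operator, elliptic solvability by fixed point, positivity from monotonicity of the discrete kernel, an Aubin--Lions criterion for piecewise constant functions, and a one-sided inequality produced by lower semicontinuity in the last limit), but two of your key mechanisms would fail as stated.

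First, the uniform energy estimate. You propose to control the sign-indefinite coupling $\int u^2\Delta p$ by absorbing it into the dissipation via Gagliardo--Nirenberg interpolation between $u\in L^\infty(0,T;L^2)$ and $p\in L^2(0,T;H^{1+s})$. With only these bounds the pairing does not close: $u^2$ lives in $L^\infty(0,T;L^1)$ and $\Delta p$ in $L^2(0,T;H^{s-1})$, and $L^1$ does not pair with $H^{s-1}$ for $s\le 1$; any absorption into $\varrho\|\nabla u\|_{L^2}^2$ produces constants blowing up as the viscosity is removed, and a Gronwall-type repair is unavailable for the Caputo derivative. The paper never absorbs this term: it cancels \emph{exactly}. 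Testing the density equation with $u$ gives $-\tfrac12\int\nabla u^2\cdot\nabla p$, testing the pressure equation with $\Delta p$ gives $+\tfrac12\int\nabla u^2\cdot\nabla p$, and their sum leaves only the coercive terms; this is exactly why $H[u,p]=\int u^2+\tfrac12|\nabla p|^2$ is a Lyapunov functional (at the discrete level one also needs $f_k(D^\alpha_\tau f)_k\ge\tfrac12(D^\alpha_\tau f^2)_k$ applied to $u$ and to $\nabla p$ componentwise). Without this structural cancellation your estimates do not close, so interpolation is the wrong fix. Note also that the test function for the pressure equation must be $\Delta p$, not $p$: the energy controls $\nabla p$, and testing with $p$ produces the uncancellable term $\int u^2 p$.

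Second, the identification of the limit of $(u^{(\varrho)})^2$. You claim weak-$*$ convergence of $u^{(\varrho)}$ in $L^\infty(0,T;L^2)$ \emph{together with a.e.\ convergence along a subsequence}, and then apply Fatou. But a.e.\ convergence is precisely what is unavailable in the final limit: once the viscosity $\varrho\Delta u$ is removed there is no $H^1$ bound on $u$, hence no strong compactness in any Lebesgue space, and weak convergence alone does not yield an a.e.\ convergent subsequence (think of $\sin(nx)\rightharpoonup 0$). This is the obstruction the paper explicitly highlights as the reason a Div-Curl-type argument is missed. The paper's substitute is convexity: for $\psi\ge 0$ the functional $\Psi(u)=\int_0^T\int_{\mathbb{T}^3}u^2\psi\,dx\,dt$ is convex and continuous for the strong $L^2(0,T;L^2)$ topology, hence weakly sequentially lower semicontinuous (Brezis, Cor.~III.8), so $\liminf_{\varrho\to0}\Psi(u^{(\varrho)})\ge\Psi(u)$ under mere weak convergence. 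Your conclusion (the one-sided inequality for the pressure relation, equality for the density relation since $u\,\nabla p$ pairs weak against strong) is correct, but the route to it must go through this convexity argument, not Fatou. Relatedly, keeping the paper's three-parameter scheme ($\tau\to0$, then $\varepsilon\to0$, then $\varrho\to0$) matters: the $\varrho$-viscosity is what gives strong compactness of $u$ in the first two limits, so that both relations remain equalities there and the inequality appears only at the very end.
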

The operator $(-\Delta)^{s}$ is the fractional Laplacian and, on the torus, is defined via its Fourier series. More precisely, the $n$-th Fourier coefficient of $(-\Delta)^{s} u$ is 
\begin{align*}
\widehat{(-\Delta)^{s} u } (n) =  |n|^{2s} \widehat{u}(n),
\end{align*}
with $\widehat{u}(n)$ the $n$-th Fourier coefficient of $u$:
$$
 \widehat{u}(n) = \frac{1}{(2\pi)^{3}} \int_{\mathbb{T}^{3}} u(x) e^{-in\cdot x}\;dx.
$$

The starting point about our analysis is the observation that 
$$ H[u,p] := \int_{\mathbb{T}^3} \left( u^2 + \frac{1}{2}|\na p|^2 \right)dx  $$
is a Lyapunov functional for (\ref{1L}) and satisfies the bound
$$
D^{\alpha}_t  H[u,p] +  \int_{\mathbb{T}^3} 
|(-\Delta)^{s/2}\na p |^2 dx= 0. 
$$
To gain uniform estimates for $u$ in Sobolev spaces we first add a viscosity term $\rho \Delta u$ in the equation for $u$. After, we discretize the resulting system using the implicit Euler method; the evolution problem is approximated by a system of elliptic problems. We introduce a novel discrete formulation for the Caputo derivative, namely
\begin{equation}
\label{dis_left_caputoIN}
(D^\alpha_\tau f)_{k} := \Gamma_\alpha\tau^{-\alpha}\sum_{j=0}^{k-1}
\lambda_{k-j}(f_{j+1}-f_j),\quad k\geq 1,
\end{equation}
where the sequence $\{\lambda_k\}$ is defined by linear recurrence:
\begin{align*}
\lambda_{k+1} &= \sum_{j=1}^k ((k-j+1)^{\alpha-1}-(k-j+2)^{\alpha-1})\lambda_j,
\quad k\geq 1,\qquad\lambda_1 = 1. 
\end{align*}
We provide several properties for the sequence $\{\lambda_k\}$, including monotonicity, boundedness and asymptotic behavior. In particular we show that, as $\tau \to 0$, a suitable piecewise constant function associated to
$\lambda_k$ converges, up to constants, to the function  $\frac{1}{t^{\alpha}}$. Moreover, $ (D^\alpha_\tau f)_{k} $ satisfies some important properties, such as a discrete formulation of the fundamental theorem of calculus 
$$
f_n = f_0 + \frac{\tau^\alpha}{\Gamma_\alpha}\sum_{k=1}^n (n-k+1)^{\alpha-1}(D^\alpha_\tau f)_k,
$$
and convergence towards the continuous Caputo derivative, see Proposition \ref{prop.conv.dis.byparts} and Proposition \ref{conv.dis.caputo}.  

Once we have established the well-posedness for the discrete system, we show the existence of a sequence of solutions whose limit is a solution of the original continuous  problem (with the extra viscosity term). In this step we apply strong compactness criteria. To handle our specific problem, we need strong compactness using bounds on the discrete Caputo derivative. To the best of the authors' knowledge, there is no result of this kind in the literature. To fill this gap, we have proven the following variant of the Aubin-Lions theorem:  
\begin{theorem}\label{thm:mainII}
Assume that $X$, $B$ and $Y$ are Banach spaces such that the embedding $X\hookrightarrow B$ is compact and the embedding $B\hookrightarrow Y$ is continuous. Let $1\leq p\leq \infty$, $0 < \alpha < 1$ and $\{f^{(\tau)}\}$ be a sequence of piecewise constant functions, satisfying
\begin{align}\label{cond_comp_theo}
\norm{f^{(\tau)}}_{L^p(0,T;X)}+ \norm{D^{\alpha}_\tau f^{(\tau)}} _{L^p(0,T;Y)} \leq C_0,
\end{align}
 where $C_0$ is a constant independent of $\tau$ and $D^{\alpha}_{\tau}$ is the discrete Caputo derivative operator defined in (\ref{dis_left_caputoIN}). Then $\{f^{(\tau)}\}$ is relatively compact in $ L^p(0,T;B)$.
\end{theorem}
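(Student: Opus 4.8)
The plan is to verify the two hypotheses of Simon's compactness criterion for the family $\{f^{(\tau)}\}$ in $L^p(0,T;B)$, using the continuous embedding $B\hookrightarrow Y$ to control oscillations in time and the compact embedding $X\hookrightarrow B$ to gain spatial compactness. Recall that Simon's theorem asserts that a bounded family $F\subset L^p(0,T;B)$ is relatively compact provided (i) the time averages $\int_{t_1}^{t_2} f\,dt$ are relatively compact in $B$ for every $0<t_1<t_2<T$, and (ii) the time translations satisfy $\sup_{f\in F}\|f(\cdot+h)-f(\cdot)\|_{L^p(0,T-h;B)}\to 0$ as $h\to 0$. The bridge between the $X$-bound and the $Y$-bound is the Ehrling (Lions--Aubin) inequality: for every $\eta>0$ there is $C_\eta$ with $\|v\|_B\le \eta\|v\|_X+C_\eta\|v\|_Y$. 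Applying it to $v=f^{(\tau)}(\cdot+h)-f^{(\tau)}(\cdot)$ and taking $L^p$-norms in time reduces condition (ii) in $B$ to condition (ii) in $Y$, since the $X$-part is bounded by $2\eta\,\|f^{(\tau)}\|_{L^p(0,T;X)}\le 2\eta C_0$ and $\eta$ is arbitrary. Condition (i) is then immediate: for fixed $0<t_1<t_2<T$, H\"older in time gives $\|\int_{t_1}^{t_2}f^{(\tau)}\,dt\|_X\le (t_2-t_1)^{1-1/p}\,\|f^{(\tau)}\|_{L^p(0,T;X)}\le C$, so the averages lie in a fixed bounded subset of $X$, which is relatively compact in $B$ by compactness of $X\hookrightarrow B$.

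The heart of the matter is the uniform time-equicontinuity in $Y$, and here the discrete Caputo derivative enters through the discrete fundamental theorem of calculus stated above. I first treat an integer shift $h=r\tau$. Writing $g_k:=(D^\alpha_\tau f^{(\tau)})_k$ and subtracting the representation formula at indices $m+r$ and $m$, the difference $f_{m+r}-f_m$ becomes a truncated convolution of $g$ against the kernel built from the increments of $(n-k+1)^{\alpha-1}$,
\[
f_{m+r}-f_m \;=\; \frac{\tau^\alpha}{\Gamma_\alpha}\sum_{k}\kappa_r(m-k)\,g_k,
\]
where $\kappa_r(i)=(i+1)^{\alpha-1}-(i+r+1)^{\alpha-1}$ for $i\ge 0$ and $\kappa_r(i)=(i+r+1)^{\alpha-1}$ for $-r\le i\le -1$. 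Since $\alpha-1<0$ the kernel is nonnegative, and both pieces telescope, giving $\|\kappa_r\|_{\ell^1}=2\sum_{l=1}^{r}l^{\alpha-1}\le C\,r^\alpha$. A discrete Young (Minkowski) convolution inequality for $Y$-valued sequences then yields $\|\{f_{m+r}-f_m\}_m\|_{\ell^p(Y)}\le (\tau^\alpha/\Gamma_\alpha)\,C r^\alpha\,\|\{g_k\}\|_{\ell^p(Y)}$. Because $f^{(\tau)}$ and $D^\alpha_\tau f^{(\tau)}$ are piecewise constant with step $\tau$, one has $\|\{g_k\}\|_{\ell^p(Y)}=\tau^{-1/p}\|D^\alpha_\tau f^{(\tau)}\|_{L^p(0,T;Y)}\le \tau^{-1/p}C_0$ and $\|f^{(\tau)}(\cdot+r\tau)-f^{(\tau)}(\cdot)\|_{L^p(0,T-h;Y)}=\tau^{1/p}\|\{f_{m+r}-f_m\}_m\|_{\ell^p(Y)}$. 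Combining, the factors $\tau^{\pm 1/p}$ cancel and $r^\alpha\tau^\alpha=h^\alpha$, producing the clean bound $\|f^{(\tau)}(\cdot+h)-f^{(\tau)}(\cdot)\|_{L^p(0,T-h;Y)}\le C' h^\alpha$, uniformly in $\tau$.

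It remains to remove the assumption that $h$ is a multiple of $\tau$, which is the genuinely delicate point because the $f^{(\tau)}$ have jumps. For a general shift I would write $h=r\tau+\theta$ with $0\le\theta<\tau$ and split $f(\cdot+h)-f(\cdot)=[f(\cdot+h)-f(\cdot+r\tau)]+[f(\cdot+r\tau)-f(\cdot)]$; the second term is handled by the integer-shift bound. The first is a sub-grid shift by $\theta<\tau$, which for a piecewise constant function only sees the consecutive jumps $f_{m+1}-f_m$ on a set of measure $\theta$ per node, so $\|f(\cdot+\theta)-f(\cdot)\|_{L^p(0,T;Y)}=\theta^{1/p}\|\{f_{m+1}-f_m\}_m\|_{\ell^p(Y)}\le (2C_0/\Gamma_\alpha)\,\theta^{1/p}\tau^{\alpha-1/p}$, via the $r=1$ case of the kernel estimate (where $\|\kappa_1\|_{\ell^1}=2$). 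When $r\ge 1$ we have $\tau\le h$, so this is $\le C h^\alpha$; and when $r=0$, i.e. $h=\theta<\tau$, the constraint $\tau>h$ together with $\alpha-1/p<0$ forces $\theta^{1/p}\tau^{\alpha-1/p}\le h^{1/p}h^{\alpha-1/p}=h^\alpha$, so the same modulus $\omega(h)=C''h^\alpha\to 0$ holds in every regime, uniformly in $\tau$.

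This verifies condition (ii) in $Y$, hence in $B$ by Ehrling, and Simon's theorem then concludes relative compactness in $L^p(0,T;B)$ for $1\le p<\infty$; the endpoint $p=\infty$ requires the additional equicontinuity built into the $L^\infty$ variant of Simon's criterion, which I would address separately or by interpolating against a finite exponent. The main obstacle, as the above indicates, is precisely the uniform-in-$\tau$ control of sub-grid time translations of discontinuous functions, together with the verification that the fractional kernel's $\ell^1$ norm grows only like $r^\alpha$; once both are in hand the argument is a standard Ehrling--Simon conclusion.
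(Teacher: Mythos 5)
Your argument for $1\le p<\infty$ is correct, and it takes a genuinely different route from the paper. The paper never verifies Simon's translation condition for the piecewise constant functions themselves: it passes to the continuous linear interpolants $\tilde f^{(\tau)}$, bounds the \emph{continuous} Caputo derivative of the interpolant by the discrete one (\Cref{lem.2}), combines this with the translation estimate for the continuous Caputo derivative (\Cref{lem.1}) to get a uniform $h^\alpha$ modulus for $\tilde f^{(\tau)}$, applies \Cref{lem:simon} to the interpolants, and only then transfers compactness back to $f^{(\tau)}$ through the one-step jump bound of \Cref{lem:tau jump} plus an interpolation inequality. You instead estimate translations of $f^{(\tau)}$ directly from the discrete fundamental theorem of calculus \eqref{eq:discreteFToC}: your kernel bound $\norm{\kappa_r}_{\ell^1}\le C r^\alpha$ is precisely the $r$-step generalization of \Cref{lem:tau jump} (which is the case $r=1$), your sub-grid splitting handles non-integer shifts, and your Ehrling reduction replaces the paper's multiplicative interpolation inequality $\norm{v}_B\le C_\theta\norm{v}_X^\theta\norm{v}_Y^{1-\theta}$ (and is, for abstract triples $X\hookrightarrow\hookrightarrow B\hookrightarrow Y$, the better-justified tool). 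One slip that does not affect the conclusion: in the regime $h<\tau$ with $p>1/\alpha$ the exponent $\alpha-1/p$ is positive, so your sub-grid bound gives $h^{1/p}\tau^{\alpha-1/p}\le T^{\alpha-1/p}h^{1/p}$, not $h^\alpha$; this still vanishes as $h\to0$ uniformly in $\tau$, so equicontinuity in $Y$ holds for every finite $p$. Incidentally, your computation shows that the paper's introductory remark (that only the single-step estimate $h=\tau$ is available for the piecewise constant functions) is too pessimistic when $p<\infty$.

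The genuine gap is the endpoint $p=\infty$, which the theorem includes. There your strategy cannot be repaired from within, because Simon's translation criterion actually \emph{fails} for the family $\{f^{(\tau)}\}$: for any $0<h<\tau$ the difference $f^{(\tau)}(\cdot+h)-f^{(\tau)}$ equals the jump $f_{n+1}-f_n$ on a set of positive measure near each node, so $\norm{f^{(\tau)}(\cdot+h)-f^{(\tau)}}_{L^\infty(0,T-h;Y)}=\max_n\norm{f_{n+1}-f_n}_Y$ independently of $h$, and the supremum over the family (which contains members with fixed $\tau$ and genuine jumps) does not tend to $0$ as $h\to0$. Neither of your proposed fixes closes this: the ``$L^\infty$ variant'' of Simon's criterion imposes the same translation condition (it yields compactness in $C([0,T];B)$), and compactness in $L^q(0,T;B)$ for finite $q$ together with a uniform bound in $L^\infty(0,T;X)$ does not imply compactness in $L^\infty(0,T;B)$ (perturbations supported on sets of measure $\tau$ vanish in every $L^q$ with $q<\infty$ but not in $L^\infty$); any valid transfer must re-use the Caputo bound. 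This is exactly what the paper's interpolant detour is for: $\tilde f^{(\tau)}$ is continuous, satisfies the uniform $h^\alpha$ estimate for \emph{all} $h$ by \Cref{lem.1,lem.2}, Simon applies at $r=\infty$, and compactness is then transferred to $f^{(\tau)}$ not through equicontinuity of $f^{(\tau)}$ but through the vanishing perturbation $\norm{f^{(\tau)}-\tilde f^{(\tau)}}_{L^\infty(0,T;Y)}\le C\tau^\alpha\to0$ combined with an Ehrling-type step. To make your proof cover $1\le p\le\infty$ as stated, you would need to graft on this (or an equivalent) perturbation argument for the case $p=\infty$.
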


The proof of Theorem \ref{thm:mainII} uses Simon's version of Aubin-Lions' compactness theorem \cite{simon1986compact}. We first work with the linear interpolant functions $\tilde{f}^{(\tau)}$ of $f^{(\tau)}$ and show that $\|D_\tau^\alpha f^{(\tau)} \| \ge \|D_t^\alpha\tilde{ f}^{(\tau)}\|$; here we compare the discrete derivative of a piecewise constant function with the continuous derivative of the corresponding linear interpolant. After, we prove that the interpolants satisfy the estimate $\|\tilde{ f}^{(\tau)}(\cdot +h)-\tilde{ f}^{(\tau)}\|_{L^p(0,T,Y)}\| \le h^\alpha \|D_t^\alpha\tilde{ f}^{(\tau)}\|$ and use bound (\ref{cond_comp_theo}) and Simon's version of Aubin-Lions' compactness theorem to conclude that $\{\tilde{ f}^{(\tau)} \}$ is compact in $ L^p(0,T;B)$. In the end,  we show that convergence of the family of linear interpolants implies convergence of the piecewise constant functions. The limit is the same.


One could wonder if it is necessary to work with linear interpolants $\{\tilde{ f}^{(\tau)} \}$ instead with piecewise constant functions $\{{ f}^{(\tau)} \}$. In fact, Theorem 1 in \cite{dreher2012compact} says that an estimate of the form $\|{ f}^{(\tau)}(\cdot +h)-{ f}^{(\tau)}\|_{L^p(0,T,Y)} \le C h$ is sufficient to invoke Aubin-Lions' theorem. Unfortunately, due to the nature of the Caputo derivative, the only estimate one can get for $\{{ f}^{(\tau)} \}$ is: 
$$
\|{ f}^{(\tau)}(\cdot +\tau)-{ f}^{(\tau)}\|_{L^p(0,T,Y)} \le C \tau^\alpha, 
$$ 
which, alone, is not enough to guarantee compactness, see Proposition 2 in \cite{dreher2012compact}.

In the last step of the proof of  Theorem \ref{thm:mainthm} we remove the viscosity term $\rho \Delta u$. The major difficulty, in the approximation process, is the identification of the limit of ${u^{(\rho)}}^2$. The energy inequality  provides plenty of informations for the pressure $p$, but only uniform integrability in $L^\infty(0,T,L^2(\mathbb{T}^3))$ for $u$. It is unclear, at the moment, how to use the bounds for $\nabla p$ to get useful bounds for $\nabla u$ or $u$. The authors in \cite{caffarelli2018non} overcome a similar problem using the Div-Curl lemma, a tool commonly employed in the study of fluid-dynamic systems. The presence of the Caputo derivative makes this method not useful. One interesting question would be if the Div-Curl lemma is still true when one considers derivatives of order strictly less than one.  The authors have not explored this direction yet. Even if the lack of strong compactness prevents us from identifying the limit of $u^{(\rho)}$, we are able to provide a lower bound for: 
$$
\int_0^T\int_{\mathbb{T}^3} (u^{(\rho)})^2\;\psi \;dxdt.
$$ 
The above integral defines a functional 
$$\Psi(u) : = \int_{0}^{T}\int_{\text{\ensuremath{\mathbb{T}}}^{3}} 
u^{2}\psi \;dxdt$$ 
which is convex and continuous in the strong topology  of $L^2(0,T,L^2(\mathbb{T}^3))$ for suitable functions $\psi$. We conclude that 
$$
\liminf_{\rho \to 0} \Psi(u^{(\rho)}) \ge  \Psi(u),
$$
where $u$ is the weak limit in the $L^2(0,T,L^2(\mathbb{T}^3))$-topology.

\subsection{Outline}
 The rest of the paper is organized as follows. In \Cref{sec:pre}, we prove some properties of the discrete Caputo derivatives, then prove \Cref{thm:mainII}. \Cref{sec:eqn,sec:tauLimit,sec:epsilonLimit,sec:rhoLimit} concern the proof of Theorem \ref{thm:mainthm}. In the Appendix we show a formal $L^{3}(0,T;L^3(\mathbb{T}^3))$ estimate for $u$, provided $\frac{1}{2}<s\le1$.
 
 
\subsection{Notation}
We list here the notations that will be used consistently throughout the paper. 
\begin{itemize}
\item $\Gamma_{\alpha}$: the gamma function evaluated at $\alpha$.
\item $\lceil z \rceil$: the ceiling function, namely the smallest integer greater than or equal to $z$.
\item $\lfloor z \rfloor$: the floor function, namely the largest integer smaller than or equal to $z$.
\item $g_{+}:=\max\{g,0\}$ and $g_{-}:=\min\{g,0\}$ for measurable function $g$.
\end{itemize}

\section{Preliminaries and Compactness Criteria}\label{sec:pre}
\subsection{Preliminaries}
In this section we state various useful properties of the Caputo derivative. We first recall the definition \cite{li2018generalized}.
\begin{definition}\label{def:contCaputo}
If a function $f(t)$ is absolutely continuous in $(0,T]$, for $0< \alpha <1$ the left Caputo derivative of $f$ in variable $t\in (0,T)$ is defined as
\begin{equation}
D_t^{\alpha}f(t)=\frac{1}{\Gamma_{1-\alpha}}\int_0^t\frac{f'(s)}{(t-s)^{\alpha}}\ ds,\label{left_caputo1}
\end{equation}
and the right Caputo derivative is defined as
\begin{equation}
{}^*D_t^{\alpha}f(t)=\frac{1}{\Gamma_{1-\alpha}}\int_t^T\frac{f'(s)}{(s-t)^{\alpha}}\ ds,\label{right_caputo1}
\end{equation}
where $\Gamma_{z}$ is the gamma function  $\Gamma_z=\int_0^{\infty}e^{-x}x^{z-1}\;dx$.
\end{definition}
\begin{remark}
Alternative formulas for \eqref{left_caputo1} and \eqref{right_caputo1} are
\begin{equation}
D_t^{\alpha}f(t)=\frac{1}{\Gamma_{1-\alpha}}\left( \frac{f(t)-f(0)}{t^{\alpha}}+\alpha\int_0^t\frac{f(t)-f(s)}{(t-s)^{1+\alpha}}\ ds\right),\label{left_caputo2}
\end{equation}
and
\begin{equation}
{}^*D_t^{\alpha}f(t)=\frac{1}{\Gamma_{1-\alpha}}\left( \frac{f(T)-f(t)}{(T-t)^{\alpha}}+\alpha\int_t^T\frac{f(s)-f(t)}{(s-t)^{1+\alpha}}\ ds\right),\label{right_caputo2}
\end{equation}
respectively.
\end{remark}
For the purpose of our problem, we define a discrete version of \eqref{left_caputo1} and \eqref{right_caputo1}.
\begin{definition}\label{def:disCaputo}
Consider a function $f(t)$ in $[0,T]$. Subdivide the time interval $[0,T]$ into $N$ subintervals with uniform time step $\tau=T/N$. Denote $t_k:=k\tau$ and $f_k:=f(t_k)$ for $k=0,1, ..,N$. The left and right discrete Caputo derivatives of $f(t)$ of order $\alpha$ with $0<\alpha\leq 1$ at $t_k$ are approximated by the linear combination of values of $f$ at $t_0,t_1, ..,t_k$ as follows:
\begin{equation*}
\label{dis_left_caputo1}
(D^\alpha_\tau f)_{k} = \Gamma_\alpha\tau^{-\alpha}\sum_{j=0}^{k-1}
\lambda_{k-j}(f_{j+1}-f_j),\quad k\geq 1;
\end{equation*}
\begin{equation*}\label{dis_right_caputo1}
({}^*D^\alpha_\tau f)_{k} = \Gamma_\alpha\tau^{-\alpha}\sum_{j=k+1}^N
\lambda_{j-k}(f_j-f_{j-1}),\quad k\leq N-1,
\end{equation*}
where the sequence $\{\lambda_k\}$ is defined by the following infinite linear recurrence:
\begin{align}
\lambda_{k+1} &= \sum_{j=1}^k ((k-j+1)^{\alpha-1}-(k-j+2)^{\alpha-1})\lambda_j,
\quad k\geq 1,\qquad\lambda_1 = 1. \label{rec_rela_Lambda}
\end{align}
We extend the definition for any $t\in [0,T]$ by the backward finite difference operator:
\begin{equation}\label{eq:disCaputo2}
D^{\alpha}_{\tau}f(t):=\Gamma_{\alpha}\tau^{-1-\alpha}\int_0^{\lfloor t/\tau \rfloor  \tau-\tau}\lambda_{(\lfloor t/\tau \rfloor-\lfloor s/\tau \rfloor )}  (f(s+\tau)-f(s))\ ds,
\end{equation}
with $D^{\alpha}_{\tau}f(0)=0$. \\
When $f(t)$ is a piecewise constant function 
$f(t):= \sum_{k=1}^\infty f_k \chf{(t_{k-1},t_k]}(t)$ with 
$f(t) = f_{in}$ for $t\leq 0$, then \eqref{eq:disCaputo2} can be rewritten as
\begin{equation*}\label{eq:disCaputo3}
D_{\tau}^{\alpha}f^{(\tau)}(t)=\sum_{k=1}^n(D^{\alpha}_{\tau}f^{(\tau)})_k  \chf{(t_{k-1},t_k]}(t).
\end{equation*}
\end{definition}
 For $\alpha=1$ the operator $(D^\alpha_\tau f)_{k}$ reduces to the classical backward finite difference operator:
$$
(D^1_\tau f)_k = \frac{f_k-f_{k-1}}{\tau},\quad k\geq 1,
$$
since  $\lambda_j=0$ for all  $j\geq 2$. For $\alpha<1$ one cannot obtain an exact expression for $\lambda_j$. We have, however, an upper bound, the asymptotic behaviour, and the proof of monotonicity, as stated in the following proposition. 

\begin{proposition} 
Let $\{\lambda_k\}_{k\in \mathbb{N}}$ be defined as in (\ref{rec_rela_Lambda}). It holds that 
\begin{equation}
\lambda_{k+1} < 
\lambda_k \leq \frac{1}{k^{\alpha}}, \quad \sum_{j=1}^{k} \lambda_j  \le k^{1-\alpha},\qquad k\geq 1, \label{est_sum_lambda}
\end{equation}
and as $\tau\to 0$
\begin{equation}\label{weak.conv.lambda}
\tau^{-\alpha}\sum_{k=1}^\infty\lambda_k\chf{(t_{k-1},t_k]}(t)\rightharpoonup \frac{t^{-\alpha}}{\Gamma_{\alpha}\Gamma_{1-\alpha}} \mbox{ in } L^p(0,T) \mbox{ with } 1\leq\, p<1/\alpha.
\end{equation}
Moreover, for any sequence $f^{(\tau)} \rightarrow f$ strongly convergent in $L^q(0,T)$ for some $q\in [1,\infty)$, we have 
\begin{equation}\label{weak.conv.lambda.Convolution}
\frac{1}{\tau^{\alpha}}\int_0^{t_n}\sum_{k=1}^n \lambda_k  \chf{(t_{k-1},t_k]}(s) f^{(\tau)}(t-s)\; ds \rightarrow \frac{1}{\Gamma_{\alpha}\Gamma_{1-\alpha}}\int_0^t s^{-\alpha} f(t-s)\; ds,
\end{equation}
strongly in $L^{r}(0,T)$ for every $r < \frac{q}{1-(1-\alpha)q}$.
\end{proposition}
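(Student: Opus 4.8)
The plan is to extract all four assertions from a single algebraic reformulation of the recurrence. Writing $c_m := m^{\alpha-1}-(m+1)^{\alpha-1}$, I first record that $c_m>0$, that $\sum_{m\ge1}c_m=1$ (telescoping), and that $\{c_m\}$ is decreasing because $t\mapsto t^{\alpha-1}$ is convex; in these terms (\ref{rec_rela_Lambda}) becomes the convolution $\lambda_{k+1}=\sum_{m=1}^k c_m\lambda_{k+1-m}$. The cornerstone is the equivalent summation identity
\[
\sum_{i=1}^n (n-i+1)^{\alpha-1}\lambda_i=1,\qquad n\ge1,
\]
which I would verify by induction: at $n=1$ it reads $\lambda_1=1$, and subtracting the identity at $n-1$ from the one at $n$ reproduces exactly the recurrence. (This is the discrete fundamental theorem of calculus quoted in the introduction.)

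From this identity both bounds in (\ref{est_sum_lambda}) are immediate in the right order. Since $\alpha-1<0$ we have $(n-i+1)^{\alpha-1}\ge n^{\alpha-1}$ for $1\le i\le n$, hence $1\ge n^{\alpha-1}\sum_{i=1}^n\lambda_i$, which is the sum bound $\sum_{i=1}^n\lambda_i\le n^{1-\alpha}$. For monotonicity I would use the Toeplitz structure: the identity says $\lambda=G^{-1}\mathbf 1$ where $G$ is the lower-triangular Toeplitz matrix with symbol $Q(x)=\sum_{j\ge0}(j+1)^{\alpha-1}x^j$, so $\lambda_n=\sum_{j=0}^{n-1}r_j$ with $1/Q(x)=\sum_j r_j x^j$ and therefore $\lambda_n-\lambda_{n-1}=r_{n-1}$. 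Because $q_j=(j+1)^{\alpha-1}$ is strictly log-convex with $q_0=1$, Kaluza's sign theorem gives $r_m<0$ for $m\ge1$, i.e.\ $\lambda_{k+1}<\lambda_k$; a direct strong induction exploiting the monotonicity of $c_m$ is an alternative. The pointwise bound then follows from the identity together with monotonicity: $1\ge\lambda_n\sum_{j=1}^n j^{\alpha-1}\ge\lambda_n\,n^{\alpha}$, where $\sum_{j=1}^n j^{\alpha-1}\ge n^\alpha$ is itself a short induction.

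For the weak convergence (\ref{weak.conv.lambda}) I would first pin down the sharp asymptotics $\lambda_k\sim k^{-\alpha}/(\Gamma_\alpha\Gamma_{1-\alpha})$. Solving the recurrence through generating functions gives $\sum_k\lambda_k x^k = x^2/((1-x)P(x))$ with $P(x)=\sum_{m\ge1}m^{\alpha-1}x^m$; the classical polylogarithm estimate $P(x)\sim\Gamma_\alpha(1-x)^{-\alpha}$ as $x\to1^-$ yields $\sum_k\lambda_k x^k\sim(1-x)^{\alpha-1}/\Gamma_\alpha$, and since $\{\lambda_k\}$ is monotone the Hardy--Littlewood--Karamata Tauberian (monotone density) theorem transfers this to $\lambda_k\sim k^{-\alpha}/(\Gamma_\alpha\Gamma_{1-\alpha})$, matching $[x^k](1-x)^{\alpha-1}\sim k^{-\alpha}/\Gamma_{1-\alpha}$. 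Setting $\phi_\tau(t)=\tau^{-\alpha}\sum_k\lambda_k\chf{(t_{k-1},t_k]}(t)$, the pointwise bound $\lambda_k\le k^{-\alpha}$ gives $\norm{\phi_\tau}_{L^p(0,T)}^p=\tau^{1-\alpha p}\sum_k\lambda_k^p\le C(T,p)$ precisely when $\alpha p<1$, i.e.\ $p<1/\alpha$; combining this uniform bound with the asymptotics (which identifies $\lim_\tau\int_a^b\phi_\tau$ as a Riemann sum for $\frac{1}{\Gamma_\alpha\Gamma_{1-\alpha}}\int_a^b t^{-\alpha}\,dt$) and the density of step functions yields the claimed weak convergence.

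Finally, (\ref{weak.conv.lambda.Convolution}) is a convolution statement that I would obtain by splitting $\phi_\tau*f^{(\tau)}-K*f=\phi_\tau*(f^{(\tau)}-f)+(\phi_\tau-K)*f$ with $K(t)=t^{-\alpha}/(\Gamma_\alpha\Gamma_{1-\alpha})$. The first term is estimated by Young's inequality: $K\in L^p_{\loc}$ for every $p<1/\alpha$, and with $1/r=1/p+1/q-1$ letting $p\uparrow1/\alpha$ produces exactly the threshold $r<q/(1-(1-\alpha)q)$; this term vanishes by the strong $L^q$ convergence of $f^{(\tau)}$. The second term vanishes by the weak convergence of $\phi_\tau$ together with a density argument (approximate $f$ by continuous functions, using the uniform $L^p$ bound to control the error). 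I expect the main obstacle to be the asymptotic analysis of $\lambda_k$: isolating the exact constant $1/(\Gamma_\alpha\Gamma_{1-\alpha})$ and making the Tauberian passage rigorous (equivalently, proving a sharp infinite-mean renewal estimate) is the delicate core, whereas once the sharp asymptotic and the uniform $L^p$ bound are available, both (\ref{weak.conv.lambda}) and (\ref{weak.conv.lambda.Convolution}) reduce to standard functional-analytic and Young-inequality arguments.
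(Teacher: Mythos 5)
Your proposal is correct, and up to the bounds \eqref{est_sum_lambda} it essentially coincides with the paper: both arguments rest on the summation identity $\sum_{j=1}^n(n-j+1)^{\alpha-1}\lambda_j=1$ (the paper's \eqref{1}), from which the sum bound and the pointwise bound follow as you describe (the paper gets $\lambda_n\le n^{-\alpha}$ slightly more directly by bounding $(n-j+1)^{\alpha-1}\ge n^{\alpha-1}$ rather than summing $j^{\alpha-1}$, and it proves monotonicity by the direct induction on $\sum_{j=1}^i(i+1-j)^{\alpha-1}(\lambda_j-\lambda_{j+1})=(i+1)^{\alpha-1}$, i.e.\ exactly the ``strong induction'' you mention as an alternative; your appeal to Kaluza's sign theorem is also valid, since $(j+1)^{\alpha-1}$ is strictly log-convex, but imports an external result). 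The genuine divergence is in how the constant $1/(\Gamma_\alpha\Gamma_{1-\alpha})$ is identified. The paper never derives asymptotics of $\lambda_k$: it extracts a weak $L^p$ limit $\lambda$ of $\lambda^{(\tau)}$ by compactness, passes to the limit in the discrete identity to obtain the Abel integral equation $\int_0^t\lambda(s)(t-s)^{\alpha-1}\,ds=1$ (its \eqref{lambdaeta=1}), and solves it by Laplace transform; the convolution statement \eqref{weak.conv.lambda.Convolution} is proved \emph{before} the limit is identified, and upgrading weak to strong convergence there requires a compactness argument (Corollary 4.28 of \cite{brezis2010functional}). You instead prove the sharp pointwise asymptotics $\lambda_k\sim k^{-\alpha}/(\Gamma_\alpha\Gamma_{1-\alpha})$ from the generating-function identity $\Lambda(x)=x^2/((1-x)P(x))$ (which is a correct repackaging of \eqref{1}), the polylogarithm estimate $P(x)\sim\Gamma_\alpha(1-x)^{-\alpha}$, and the Karamata/monotone-density Tauberian theorem — legitimate here precisely because positivity and monotonicity of $\lambda_k$ are established first. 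This is heavier classical machinery, but it buys more: together with the dominant $\lambda^{(\tau)}(t)\le t^{-\alpha}$ it yields \emph{strong} $L^p(0,T)$ convergence of $\lambda^{(\tau)}$ for $p<1/\alpha$ (the paper only obtains weak convergence), after which \eqref{weak.conv.lambda.Convolution} reduces to the Young's-inequality splitting with the threshold $r<q/(1-(1-\alpha)q)$ emerging exactly as you computed. Two small points to tidy: the integral in \eqref{weak.conv.lambda.Convolution} runs over $[0,t_n]$, $t_n=\tau\lfloor t/\tau\rfloor$, not $[0,t]$, so the strip $\int_{t_n}^t\lambda^{(\tau)}(s)f^{(\tau)}(t-s)\,ds$ must be shown to vanish (the paper's $F_2^{(\tau)}$ term; your same Young estimate disposes of it); and for $r<q$ Young's inequality does not apply literally (it would force $p<1$), but convergence in $L^q(0,T)$ on the bounded interval already implies convergence in $L^r(0,T)$.
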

\begin{proof}
We begin by showing that the sequence $(\lambda_k)_{k\in\N}$ is decreasing. Due to the definition of $\lambda_i$,
\begin{align}\label{ind.0}
\lambda_{i+1} = \sum_{j=1}^i (i-j+1)^{\alpha-1}\lambda_j -
\sum_{j=0}^{i-1}(i-j+1)^{\alpha-1}\lambda_{j+1},
\end{align}
which implies
\begin{align}\label{ind.1}
\sum_{j=1}^i (i+1-j)^{\alpha-1}(\lambda_j - \lambda_{j+1}) = (i+1)^{\alpha-1},
\qquad i\geq 1.
\end{align}
We prove by induction that $\lambda_{i+1}<\lambda_i$ for $i\geq 1$.
Since $\lambda_2 = 1 - 2^{\alpha-1} < 1 = \lambda_1$, the statement is true for $i=1$. Let us now assume that $\lambda_{i+1} < \lambda_i$ for 
$1\leq i \leq k-1$. From \eqref{ind.1} evaluated at $i=k$ it follows
\begin{align*}
1 - (k+1)^{1-\alpha}(\lambda_{k} - \lambda_{k+1}) =
\sum_{j=1}^{k-1} \left(1-\frac{j}{k+1}\right)^{\alpha-1}
(\lambda_j - \lambda_{j+1}) .
\end{align*}
Since $\lambda_j - \lambda_{j+1} > 0$ for $j=1,\ldots,k-1$ by inductive assumption and $\left(1-\frac{j}{k+1}\right)^{\alpha-1} < \left(1-\frac{j}{k}\right)^{\alpha-1}$ for $j=1,\ldots,k-1$, we deduce
\begin{align*}
1 - (k+1)^{1-\alpha}(\lambda_{k} - \lambda_{k+1}) < 
\sum_{j=1}^{k-1} \left(1-\frac{j}{k}\right)^{\alpha-1}
(\lambda_j - \lambda_{j+1}) .
\end{align*}
Evaluating \eqref{ind.1} at $i=k-1$ yields that the right-hand side of the above identity equals 1, so:
\begin{align*}
1 - (k+1)^{1-\alpha}(\lambda_{k} - \lambda_{k+1}) < 1,
\end{align*}
meaning that $\lambda_{k} - \lambda_{k+1} > 0$. Therefore the sequence $(\lambda_i)_{i\in\N}$ is decreasing.\\
We prove now the upper bounds for $\lambda_k$.
From \eqref{ind.0} it follows
\begin{align*}
\sum_{j=1}^{i+1} (i-j+2)^{\alpha-1}\lambda_{j} = 
\sum_{j=0}^{i} (i-j+1)^{\alpha-1}\lambda_{j+1} = 
\sum_{j=1}^i (i-j+1)^{\alpha-1}\lambda_j,\quad i\geq 1. 
\end{align*}
Thanks to the above identity, one can easily prove by induction that
\begin{equation}
\label{1}
\sum_{j=1}^i (i-j+1)^{\alpha-1}\lambda_j = 1,\quad i\geq 1.
\end{equation}
Thus, since $\{\lambda_k\}$ is a decreasing sequence, 
\begin{equation*}
1 = \sum_{j=1}^n \lambda_j(n-j+1)^{\alpha -1} \geq \sum_{j=1}^n \lambda_j n^{\alpha -1} \ge \lambda_n n^{\alpha },
\end{equation*}
which implies
$$\lambda_n \leq n^{-\alpha},\quad \sum_{j=1}^n \lambda_j  \le n^{1-\alpha} \quad \textrm{ for any $n\geq 1$}.
$$ 
Define now
$$\lambda^{(\tau)}(s):=\tau^{-\alpha}\sum_{j=1}^n\lambda_j \chf{(t_{j-1},t_j]}(s).$$ 
Since $\lambda^{(\tau)}(s) \leq \sum_{j=1}^n(j\tau)^{-\alpha}\chf{(t_{j-1},t_j]}(s) \leq  s^{-\alpha}$, the sequence $\lambda^{(\tau)}$ is uniformly bounded in $L^p(0,T)$ for every $p\in [1,1/\alpha)$ and $\lambda^{(\tau)}\rightharpoonup \lambda $ in $L^p(0,T)$ with $1 \leq\, p < 1/\alpha$ up to a subsequence.

Next we show (\ref{weak.conv.lambda.Convolution}). Let $n = \lfloor t/\tau \rfloor$, $t_n\leq t <t_{n+1}$, and $q\leq r < \frac{q}{1-(1-\alpha)q}$. 
Define
\begin{align*}
F_n^{(\tau)}(t) := &\int_0^{t_n} \lambda^{(\tau)}(s)  f^{(\tau)} (t-s)\; ds\\
=& \int_0^{t} \lambda^{(\tau)}(s)  f^{(\tau)} (t-s)\; ds - \int_{t_n}^t \lambda^{(\tau)}(s)  f^{(\tau)} (t-s)\; ds\\
=: &\;F_1^{(\tau)}(t) - F_2^{(\tau)}(t).
\end{align*}
The term $F_2^{(\tau)}$ tends to zero in $L^r(0,T)$ as $\tau\to 0$. 
In fact,
\begin{align*}
\norm{F_2^{(\tau)}}_{L^r(0,T)} 
&\leq \norm{\int_{t-\tau}^t\lambda^{(\tau)}(s)  f^{(\tau)}(t-s)\; ds}_{L^r(0,T)}\\
&=\norm{ (\lambda^{(\tau)}\chf{[0,T]})\ast (f^{(\tau)}\chf{[0,\tau]}) }_{L^r(0,T)} \\
&\leq \norm{\lambda^{(\tau)}}_{L^{1+\eta}(0,T)}
\norm{f^{(\tau)}}_{L^{q}(0,\tau) }\\
&\leq C \norm{f^{(\tau)} - f}_{L^q(0,T)} 
+ C\norm{f}_{L^q(0,\tau)},
\end{align*}
where $1+\eta = (1+1/r-1/q)^{-1} \in [1, 1/\alpha)$,
and the right-hand side of the above inequality tends to zero as $\tau\to 0$
since $f^{(\tau)}\to f$ strongly in $L^q(0,T)$.\\
On the other hand,
\begin{align*}
&\norm{F_1^{(\tau)} - \int_0^t \lambda(s)  f(t-s)\; ds}_{L^q(0,T) }\\
\leq & \norm{\int_0^t \left(\lambda^{(\tau)}-\lambda\right)(s)  f(t-s)\; ds}_{L^q(0,T)} + \norm{\int_0^t\lambda^{(\tau)}(s) \left(f^{(\tau)}-f\right)(t-s) \; ds}_{L^q(0,T)}\\
\leq & \norm{\int_0^t \left(\lambda^{(\tau)}-\lambda\right)(s)  f(t-s)\; ds}_{L^q(0,T)} + \norm{\lambda^{(\tau)}}_{L^1(0,T)} \norm{f^{(\tau)}-f}_{L^q(0,T)}.
\end{align*}
The second term above converges to $0$ as $\tau\rightarrow 0$. Now consider the first term.
Since $f\in L^{q}(0,T)$ and $\norm{\lambda^{(\tau)}-\lambda}_{L^1(0,T)} \leq C$, for $C$ not depending on $\tau$, by Corollary 4.28 in \cite{brezis2010functional}, the sequence
$$
I^{(\tau)}:= \int_0^t \left(\lambda^{(\tau)}-\lambda\right)(s)  f(t-s)\; ds
$$
has compact closure in $L^{q}(0,T)$. 
However, for every $\phi\in L^{q'}(0,T)$ ($q' = q/(q-1)$ for $q>1$, $q'=\infty$ for $q=1$),
\begin{align*}
\int_0^T I^{(\tau)}(t)\phi(t)dt &= \int_0^T\phi(t)
\int_0^t \left(\lambda^{(\tau)}-\lambda\right)(s)  f(t-s)\; ds\; dt\\
&= \int_0^T \left(\lambda^{(\tau)}-\lambda\right)(s)\int_s^T \phi(t) f(t-s)\; dt \; ds,
\end{align*}
and 
\begin{align*}
\left| \int_s^T \phi(t) f(t-s)\; dt \right| \leq 
\|\phi\|_{L^{q'}(s,T)}\|f\|_{L^q(0,T-s)}\leq 
\|\phi\|_{L^{q'}(0,T)}\|f\|_{L^q(0,T)}\quad\mbox{for }s\in [0,T],
\end{align*}
which means that $s\in [0,T]\mapsto\int_s^T \phi(t) f(t-s)\; dt\in\R$ is in $L^\infty(0,T)$. Given that $\lambda^{(\tau)}-\lambda\rightharpoonup 0$ weakly in $L^1(0,T)$, we deduce that $\int_0^T I^{(\tau)}(t)\phi(t)dt\to 0$ for every $\phi\in L^{q'}(0,T)$, that is, $I^{(\tau)}\rightharpoonup 0$ weakly in $L^q(0,T)$.
Since we already knew that $I^{(\tau)}$ is relatively compact in $L^q(0,T)$, we conclude that $I^{(\tau)}\to 0$ strongly in $L^q(0,T)$ and therefore
\begin{align*}
F_1^{(\tau)}\to \int_0^t \lambda(s)  f(t-s)\; ds\quad\mbox{strongly in }L^q(0,T).
\end{align*}
However, the above convergence is also strong in $L^r(0,T)$, since
$F_1^{(\tau)}$ is bounded in $L^{\tilde{r}}(0,T)$ for every $1\leq\tilde{r}<\frac{q}{1-(1-\alpha)q}$ being the convolution of $\lambda^{(\tau)}$, which is bounded in $L^p(0,T)$ for every $1\leq p < 1/\alpha$, and $f^{(\tau)}$, which is bounded in $L^q(0,T)$.
Summarizing up we have
\begin{equation}\label{strong.conv.lambda.f}
\int_0^{\tau\lfloor t/\tau\rfloor} \lambda^{(\tau)}(s)  f^{(\tau)} (t-s)\; ds\rightarrow \int_0^t\lambda(s)  f(t-s)\; ds\quad\mbox{strongly in $L^r(0,T)$ }
\end{equation}
for every $1\leq r < \frac{q}{1-(1-\alpha)q}$,
for any sequence $f^{(\tau)} \rightarrow f$ strongly convergent in $L^q(0,T)$ for some $q\in [1,\infty)$.

Our last step is finding the value of $\lambda(t)$. 
Since 
$$
\lambda_j (n-j+1)^{\alpha -1} = \frac{1}{\tau}\int_{t_{j-1}}^{t_j}\lambda_j (n-j+1)^{\alpha -1}\; ds = \frac{1}{\tau}\int_0^{t_n} \lambda_j (n-j+1)^{\alpha -1}  \chf{(t_{j-1},t_j]}(s)\; ds,
$$
from \eqref{1} one obtains
\begin{align*}
1 = \sum_{j=1}^n\lambda_j(n-j+1)^{\alpha -1}
=&\ \frac{1}{\tau}\int_0^{t_n}\sum_{j=1}^n\lambda_j (n-j+1)^{\alpha -1}  \chf{(t_{j-1},t_j]}(s)\; ds\\
=&\ \frac{1}{\tau}\int_0^{t_n} \left( \sum_{j=1}^n\lambda_j   \chf{(t_{j-1},t_j]}(s) \right) \left( \sum_{j=1}^n (n-j+1)^{\alpha -1}  \chf{(t_{j-1},t_j]}(s) \right)\; ds\\
=&\ \frac{1}{\tau}\int_0^{t_n} \left( \sum_{j=1}^n\lambda_j   \chf{(t_{j-1},t_j]}(s) \right) \left( n+1-\lceil s/\tau \rceil \right)^{\alpha -1}\; ds ,
\end{align*}
which means, given the definition of $\lambda^{(\tau)}$,
\begin{equation}\label{lim=1}
\int_0^{t_n}\lambda^{(\tau)}(s) \left( t_n +\tau -\tau   \lceil s/\tau \rceil \right)^{\alpha -1}\; ds = 1.
\end{equation}
It holds
\begin{align*}
\chf{[0,t_n]}(s)\left( t_n +\tau -\tau   \lceil s/\tau \rceil \right)^{\alpha -1} &\geq \chf{[0,t-\tau]}(s)\left( t +\tau -s \right)^{\alpha -1},\\
\chf{[0,t_n]}(s)\left( t_n +\tau -\tau   \lceil s/\tau \rceil \right)^{\alpha -1}
&\leq \chf{[0,t_n-\tau]}(s)\left( t_n - s \right)^{\alpha -1}
+ \chf{[t_n-\tau,t_n]}(s)\left( t_n - s \right)^{\alpha -1}\\
&\leq \chf{[0,t-\tau]}(s)\left( t - \tau - s \right)^{\alpha -1}
+ \chf{[t_n-\tau,t_n]}(s)\left( t_n - s \right)^{\alpha -1},
\end{align*}
%
which means for 
\begin{align*}
 f_1^{(\tau)}(z) \equiv (z+\tau)^{\alpha-1}\chf{[\tau,T]}(z),\quad 
f_2^{(\tau)}(z) \equiv (z-\tau)^{\alpha-1}\chf{[\tau,T]}(z)
\end{align*}
that
\begin{align}\label{lim=1.2.a}
&f_1^{(\tau)}(t-s) \leq \chf{[0,t_n]}(s)\left( t_n +\tau -\tau   \lceil s/\tau \rceil \right)^{\alpha -1} \leq f_2^{(\tau)}(t-s)
+ \chf{[t_n-\tau,t_n]}(s)\left( t_n - s \right)^{\alpha -1}.
\end{align}
Estimates \eqref{lim=1}, \eqref{lim=1.2.a} and the fact that $\lambda^{(\tau)}\equiv \tau^{-\alpha}\lambda_n = \tau^{-\alpha}\lambda_{\lfloor t/\tau\rfloor}$ on $(t_n-\tau,t_n]$ imply
\begin{align}\label{lim=1.2}
&\int_0^t\lambda^{(\tau)}(s)f_1^{(\tau)}(t-s)ds \leq 1 \leq
\int_0^t\lambda^{(\tau)}(s)f_2^{(\tau)}(t-s)ds + C\lambda_{\lfloor t/\tau\rfloor},\quad \tau\leq t\leq T ,
\end{align}
with $\lambda_{\lfloor t/\tau\rfloor}\to 0$ for $\tau \to 0$.
We wish to show for $\tau \to 0$ that
\begin{align}\label{lim=1.3}
f_i^{(\tau)}\to |\cdot|^{\alpha-1}\quad\mbox{strongly in }L^q(0,T),\quad\forall q\in [1,1/(1-\alpha)),\quad i=1,2.
\end{align}
Clearly $f_i^{(\tau)}(z)\to |z|^{\alpha-1}$ for $\tau \to 0$ for every $z>0$. Furthermore,
if $q\in [1,1/(1-\alpha))$,
\begin{align*}
\int_0^T |f_1^{(\tau)}(z)|^q dz\leq
\int_0^T |f_2^{(\tau)}(z)|^q dz = \int_\tau^T (z-\tau)^{q(\alpha-1)}dz = 
\frac{(T-\tau)^{q(\alpha-1)+1}}{q(\alpha-1)+1}\leq
\frac{T^{q(\alpha-1)+1}}{q(\alpha-1)+1}.
\end{align*}
Being $q\in [1,1/(1-\alpha))$ arbitrary, we deduce via dominated convergence
that \eqref{lim=1.3} holds.
From \eqref{lim=1.2} 
we conclude that 

\begin{equation}\label{lambdaeta=1}
\int_0^t\lambda(s) (t-s)^{\alpha -1}\; ds = 1.
\end{equation}
Performing the Laplace transform on both sides of \eqref{lambdaeta=1} yields
\begin{equation*}
\int_0^{+\infty}\int_0^t e^{-kt}\lambda(s)(t-s)^{\alpha -1}\; dsdt = \int_0^{+\infty}e^{-kt}\; dt = \frac{1}{k}.
\end{equation*}
Then, interchanging the order of the integrals yields
\begin{equation*}
\int_0^{+\infty} e^{-ks}\lambda(s) \int_s^{+\infty} e^{-k(t-s)}(t-s)^{\alpha -1}\; dtds =  \frac{1}{k},
\end{equation*}
which indicates
\begin{equation*}
\int_0^{+\infty} e^{-ks}\lambda(s)\; ds = \frac{1}{k}\left( \int_0^{+\infty}e^{-ks}s^{\alpha -1}\; ds \right) ^{-1} = k^{\alpha -1} \Gamma_{\alpha}^{-1}.
\end{equation*}
Performing the inverse Laplace transform on the above equation leads to
\begin{equation*}
\lambda(t) = \frac{t^{-\alpha}}{\Gamma_{\alpha} \Gamma_{1-\alpha}},
\end{equation*}
which finishes the proof.
\end{proof}

Next we deal with the fractional version of fundamental theorem of calculus \cite{li2018some}: 
\begin{equation}
f(t) = f(0) + \frac{1}{\Gamma_{\alpha}}\int_0^t (t-s)^{\alpha -1}  D^{\alpha}_tf(s)\ ds,
\label{thm:funthm_calc_cont}
\end{equation}
and
\begin{equation}\label{thm:funthm_calc_cont2}
f(t) = f(T) - \frac{1}{\Gamma_{\alpha}}\int_t^T (s-t)^{\alpha -1}  {}^*D^{\alpha}_tf(s)\ ds.
\end{equation}
In the following proposition we propose a discrete version of \eqref{thm:funthm_calc_cont} and \eqref{thm:funthm_calc_cont2}.
\begin{proposition}[Discrete fundamental theorem of calculus]\label{prop:discreteFToC}
Let $f_n:=f(t_n)$ for $n=1, ... ,N$. We have the following identities:
\begin{align}\label{eq:discreteFToC}
f_n = f_0 + \frac{\tau^\alpha}{\Gamma_\alpha}\sum_{k=1}^n (n-k+1)^{\alpha-1}(D^\alpha_\tau f)_k,\quad
n\geq 1;
\end{align}
and
\begin{align}\label{eq:discreteFToC2}
f_n = f_N - \frac{\tau^\alpha}{\Gamma_\alpha}\sum_{k=n}^{N-1} (k-n+1)^{\alpha-1}({}^*D^\alpha_\tau f)_k,\quad
n\leq N-1.
\end{align}	
\end{proposition}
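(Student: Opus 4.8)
The plan is to prove both identities by inserting the definition of the discrete Caputo derivative, interchanging the order of the two summations, and then collapsing the resulting inner sum with the normalization identity \eqref{1}. I focus on \eqref{eq:discreteFToC}; the right-sided version \eqref{eq:discreteFToC2} is completely analogous.

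First I would substitute $(D^\alpha_\tau f)_k = \Gamma_\alpha\tau^{-\alpha}\sum_{j=0}^{k-1}\lambda_{k-j}(f_{j+1}-f_j)$ into the right-hand side of \eqref{eq:discreteFToC}. The factor $\tau^\alpha/\Gamma_\alpha$ cancels against $\Gamma_\alpha\tau^{-\alpha}$, leaving
\[
\sum_{k=1}^n (n-k+1)^{\alpha-1}\sum_{j=0}^{k-1}\lambda_{k-j}(f_{j+1}-f_j).
\]
Next I would interchange the order of summation: for fixed $j\in\{0,\dots,n-1\}$ the index $k$ runs from $j+1$ to $n$, which gives
\[
\sum_{j=0}^{n-1}(f_{j+1}-f_j)\sum_{k=j+1}^{n}(n-k+1)^{\alpha-1}\lambda_{k-j}.
\]
Then, setting $m=k-j$ so that $n-k+1=(n-j)-m+1$ and $m$ runs from $1$ to $n-j$, the inner sum becomes $\sum_{m=1}^{n-j}((n-j)-m+1)^{\alpha-1}\lambda_m$, which equals $1$ by \eqref{1} applied with $i=n-j$. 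Hence the whole expression telescopes to $\sum_{j=0}^{n-1}(f_{j+1}-f_j)=f_n-f_0$, and adding back $f_0$ recovers $f_n$, proving \eqref{eq:discreteFToC}.

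For \eqref{eq:discreteFToC2} I would proceed identically: insert the definition of ${}^*D^\alpha_\tau$, interchange the summations so that for fixed $j\in\{n+1,\dots,N\}$ the index $k$ runs from $n$ to $j-1$, and substitute $m=j-k$ so that the inner sum is again of the form $\sum_{m=1}^{j-n}((j-n)-m+1)^{\alpha-1}\lambda_m=1$ by \eqref{1}. This leaves $\sum_{j=n+1}^{N}(f_j-f_{j-1})=f_N-f_n$, which together with the sign and the leading $f_N$ term yields $f_n$. The only point requiring care, and the main (though entirely routine) obstacle, is the bookkeeping of the index ranges under the interchange together with the reindexing $m=k-j$ (resp.\ $m=j-k$), so that the inner sum matches \eqref{1} exactly; once the ranges are aligned, the collapse to $1$ and the telescoping are immediate.
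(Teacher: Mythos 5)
Your proposal is correct and follows essentially the same route as the paper's own proof: substitute the definition of the discrete Caputo derivative, interchange the two sums, reindex the inner sum (via $m=k-j$, resp.\ $m=j-k$) so that it matches the normalization identity \eqref{1} and collapses to $1$, and then telescope. The index bookkeeping in both the left- and right-sided cases checks out, so nothing further is needed.
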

\begin{proof}
From (\ref{1}) one obtains
\begin{align*}
&\frac{\tau^{\alpha}}{\Gamma_{\alpha}} \sum_{k=1}^n (n-k+1)^{\alpha-1}
(D^\alpha_\tau f)_k 
= \sum_{k=1}^n (n-k+1)^{\alpha-1}
\sum_{j=0}^{k-1}\lambda_{k-j}(f_{j+1}-f_j)\\
&= \sum_{j=0}^{n-1} \sum_{k=j+1}^{n}
(n-k+1)^{\alpha-1}\lambda_{k-j}(f_{j+1}-f_j)\\
&= \sum_{j=0}^{n-1} \sum_{k=1}^{n-j}
(n-j-k+1)^{\alpha-1}\lambda_{k}(f_{j+1}-f_j) = 
\sum_{j=0}^{n-1}(f_{j+1}-f_j)
\end{align*}
which implies \eqref{eq:discreteFToC}. Furthermore,
\begin{align*}
&\frac{\tau^{\alpha}}{\Gamma_{\alpha}} \sum_{k=n}^{N-1} (k-n+1)^{\alpha-1}
({}^*D^\alpha_\tau f)_k 
= \sum_{k=n}^{N-1} (k-n+1)^{\alpha-1}
\sum_{j=k+1}^N\lambda_{j-k}(f_j-f_{j-1})\\
&= \sum_{j=n+1}^N \sum_{k=n}^{j-1}
(k-n+1)^{\alpha-1}\lambda_{j-k}(f_j-f_{j-1})\\
&= \sum_{j=n+1}^N \sum_{k=1}^{j-n}
(j-n-k+1)^{\alpha-1}\lambda_{k}(f_j-f_{j-1}) = 
\sum_{j=n+1}^N(f_j-f_{j-1})
\end{align*}
which implies \eqref{eq:discreteFToC2}. This concludes the proof.
\end{proof}

In the next proposition we show that the discrete Caputo derivative of an absolute continuous function converges, in a weak sense, to the continuous Caputo derivative. The proof is a straightforward consequence of Proposition \ref{prop:discreteFToC}. 

\begin{proposition}[Limit $\tau\to 0$]\label{conv.dis.caputo} Let $X, Y$ be Banach space, 
$f : [0,T]\to X$ absolutely continuous. Assume that $F_\tau\equiv \sum_{k=1}^N (D^\alpha_\tau f)_k\chf{[t_{k-1},t_k)}$ is weakly convergent in $L^p(0,T; Y)$ to some limit $\xi$ with $p>1/\alpha$. Then $\xi = D_t^\alpha f$.
\end{proposition}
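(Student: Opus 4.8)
The plan is to deduce $\xi = D_t^\alpha f$ from the discrete fundamental theorem of calculus \eqref{eq:discreteFToC} by passing to the limit $\tau\to 0$ and comparing with its continuous counterpart \eqref{thm:funthm_calc_cont}. Recall that \eqref{eq:discreteFToC} reads $f_n = f_0 + \frac{\tau^\alpha}{\Gamma_\alpha}\sum_{k=1}^n (n-k+1)^{\alpha-1}(D^\alpha_\tau f)_k$. Writing $n=\lfloor t/\tau\rfloor$ and using $\tau^{\alpha-1}(n-k+1)^{\alpha-1}=(t_n-t_{k-1})^{\alpha-1}$, the sum is exactly a convolution,
\begin{equation*}
\frac{\tau^\alpha}{\Gamma_\alpha}\sum_{k=1}^n (n-k+1)^{\alpha-1}(D^\alpha_\tau f)_k = \frac{1}{\Gamma_\alpha}\int_0^{t_n}\rho_\tau(t_n-s)\,F_\tau(s)\,ds,
\end{equation*}
where $\rho_\tau(r):=(\tau\lceil r/\tau\rceil)^{\alpha-1}$ is the piecewise-constant approximation of $r\mapsto r^{\alpha-1}$. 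Thus, setting $G_\tau(t):=f_{\lfloor t/\tau\rfloor}$, the discrete identity becomes $G_\tau(t)=f_0+\frac{1}{\Gamma_\alpha}\int_0^{t_n}\rho_\tau(t_n-s)F_\tau(s)\,ds$ with $n=\lfloor t/\tau\rfloor$.

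The core of the argument is to pass to the limit in this convolution. On the one hand, since $f$ is absolutely continuous, hence continuous, and $X\hookrightarrow Y$, I get $G_\tau(t)=f_{\lfloor t/\tau\rfloor}\to f(t)$ in $Y$ for every $t$. On the other hand, $\rho_\tau(t-\cdot)\to (t-\cdot)^{\alpha-1}$ strongly in $L^{p'}(0,t)$: the pointwise convergence is clear and domination follows exactly as in the proof of \eqref{lim=1.3}, via the bracketing $(r+\tau)^{\alpha-1}\le\rho_\tau(r)\le r^{\alpha-1}$. The crucial point is that the singular kernel $(t-\cdot)^{\alpha-1}$ lies in $L^{p'}(0,t)$ precisely because the hypothesis $p>1/\alpha$ is equivalent to $p'<1/(1-\alpha)$, i.e. $(\alpha-1)p'>-1$. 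Fixing $\eta\in Y'$, the assumption $F_\tau\rightharpoonup\xi$ in $L^p(0,T;Y)$ yields $\langle\eta,F_\tau\rangle\rightharpoonup\langle\eta,\xi\rangle$ weakly in $L^p(0,T)$, and the weak--strong pairing of this with the strongly convergent kernel $\rho_\tau(t_n-\cdot)\chf{[0,t_n]}$ gives
\begin{equation*}
\Big\langle \eta, \int_0^{t_n}\rho_\tau(t_n-s)F_\tau(s)\,ds\Big\rangle \longrightarrow \int_0^t (t-s)^{\alpha-1}\langle\eta,\xi(s)\rangle\,ds,
\end{equation*}
where replacing $t_n$ by $t$ is harmless since $t_n\to t$ and translation is continuous in $L^{p'}$. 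As this holds for every $\eta\in Y'$ while $G_\tau(t)\to f(t)$, I obtain the continuous representation $f(t)=f(0)+\frac{1}{\Gamma_\alpha}\int_0^t (t-s)^{\alpha-1}\xi(s)\,ds$ for all $t\in[0,T]$.

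To conclude I would invoke \eqref{thm:funthm_calc_cont}, valid for the absolutely continuous $f$ with its genuine Caputo derivative $D_t^\alpha f\in L^1(0,T;X)$, and subtract the two representations to get
\begin{equation*}
\int_0^t (t-s)^{\alpha-1}\big(\xi-D_t^\alpha f\big)(s)\,ds = 0 \quad\text{in }Y,\qquad\text{for all }t\in[0,T].
\end{equation*}
This is an Abel integral equation whose only solution is trivial: pairing with $\eta\in Y'$ reduces it to the scalar case, and applying the Riemann--Liouville operator $I^{1-\alpha}$ together with the semigroup identity $I^{1-\alpha}I^{\alpha}=I^{1}$ gives $\int_0^t\langle\eta,(\xi-D_t^\alpha f)(s)\rangle\,ds=0$ for all $t$ and all $\eta$, whence $\xi-D_t^\alpha f=0$ a.e. by Lebesgue differentiation, i.e. $\xi=D_t^\alpha f$.

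The main obstacle is the convolution limit, where one must combine the \emph{weak} convergence of $F_\tau$ with the \emph{strong} $L^{p'}$ convergence of the discrete kernels $\rho_\tau$; the latter is available only because of the standing assumption $p>1/\alpha$, which controls the singularity $(t-s)^{\alpha-1}$ at $s=t$. The remaining steps — the exact convolution rewriting of \eqref{eq:discreteFToC}, the convergence $G_\tau\to f$, and the injectivity of $I^\alpha$ — are routine.
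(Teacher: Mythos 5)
Your proof is correct and follows essentially the same route as the paper's: rewrite the discrete fundamental theorem of calculus \eqref{eq:discreteFToC} as a convolution of a piecewise-constant kernel with $F_\tau$, pass to the limit by pairing the strong $L^{p'}(0,t)$ convergence of the kernel (which is exactly where $p>1/\alpha$ enters, and is the content of \eqref{lim=1.2.a}--\eqref{lim=1.3}) against the weak convergence of $F_\tau$, and then compare the limiting identity with the continuous fundamental theorem of calculus \eqref{thm:funthm_calc_cont}. The only differences are expository: you make explicit the dualization with $\eta\in Y'$ and the injectivity of the Abel operator (via $I^{1-\alpha}I^{\alpha}=I^{1}$), steps the paper compresses into its final sentence.
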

\begin{proof}
From \eqref{eq:discreteFToC} it follows
\begin{align*}
f(t_n) &= f(0) + \frac{\tau^{\alpha-1}}{\Gamma_\alpha}\int_0^{t_n}\sum_{k=1}^n (n-k+1)^{\alpha-1}(D^\alpha_\tau f)_k\chf{[t_{k-1},t_k)}(s)\; ds\\
&= f(0) + \frac{1}{\Gamma_\alpha}\int_0^{t_n}\sum_{k=1}^n 
(t_n -t_k + \tau)^{\alpha-1}(D^\alpha_\tau f)_k\chf{[t_{k-1},t_k)}(s)\; ds\\
&= f(0) + \frac{1}{\Gamma_\alpha}\int_0^{t_n} \left(
\sum_{j=1}^n (t_n -t_j + \tau)^{\alpha-1}\chf{[t_{j-1},t_j)}(s)
\right) \left(  
\sum_{k=1}^n (D^\alpha_\tau f)_k\chf{[t_{k-1},t_k)}(s)\right)\; ds\\
&= f(0) + \frac{1}{\Gamma_\alpha}\int_0^{t_n} \left(
\sum_{j=1}^n (t_n -t_j + \tau)^{\alpha-1}\chf{[t_{j-1},t_j)}(s)
\right) F_\tau(s)\; ds,
\end{align*}
since the characteristic functions have disjoint support. We now pass to the limit $\tau \to 0$ in the above identity. From \eqref{lim=1.2.a}, \eqref{lim=1.3} we know that 
\begin{align*}
\sum_{j=1}^n (t_n -t_j + \tau)^{\alpha-1}\chf{[t_{j-1},t_j)}\to 
(t- s  )^{\alpha-1}
\quad\mbox{strongly in $L^{p/(p-1)}(0,t)$},
\end{align*}
for $p>\frac{1}{\alpha}$.  Since $F_\tau\rightharpoonup \xi$ weakly in $L^p(0,T)$ by assumption, we deduce that
\begin{align*}
f(t) = f(0) + \frac{1}{\Gamma_\alpha}\int_0^t (t-s)^{\alpha-1}\xi,\quad t\in [0,T],
\end{align*}
which means, see \eqref{thm:funthm_calc_cont}, that $\xi = D^\alpha f$ a.e.~in $[0,T]$. This finishes the proof of the Proposition.
\end{proof}

We now recall three equivalent integration by parts formulas for \eqref{left_caputo1}. The first one \cite{ALMEIDA2012142} reads as
\begin{equation}
\int_0^TD^{\alpha}_tf(t) \phi(t)\ dt = -\frac{1}{\Gamma_{1-\alpha}}\int_0^Tf(t) \frac{d}{dt}\left(\int_t^T\frac{\phi(s)}{(s-t)^{\alpha}}\ ds\right)dt
-\frac{f(0)}{\Gamma_{1-\alpha}}\int_0^T\frac{\phi(s)}{s^{\alpha}}\ ds.\label{byparts.almeida}
\end{equation}
The second one can be found in \cite{allen2016parabolic}:
\begin{align}
\int_0^TD^{\alpha}_tf(t)  \phi(t)\ dt =& -\int_0^TD^{\alpha}_t\phi(t)  f(t)\ dt\label{byparts.allen} 
+ \frac{1}{\Gamma_{1-\alpha}}\int_0^T \phi(t)f(t)\left[ \frac{1}{(T-t)^{\alpha}}+\frac{1}{t^{\alpha}}\right]\ dt\\
&+ \frac{\alpha}{\Gamma_{1-\alpha}} \int_0^T\int_0^t \frac{(\phi(t)-\phi(s))(f(t)-f(s))}{(t-s)^{1+\alpha}}\ dsdt\nonumber
- \frac{1}{\Gamma_{1-\alpha}}\int_0^T \frac{\phi(t)f(0)+f(t)\phi(0)}{t^{\alpha}}\ dt.\nonumber
\end{align}
The last one reads as 
\begin{equation}\label{byparts.proposed}
\int_0^TD^{\alpha}_tf(t) \phi(t)\ dt = -\int_0^Tf(t)  {}^*D^{\alpha}_t\phi(t)\; dt +\frac{\phi(T)}{\Gamma_{1-\alpha}} \int_0^T \frac{f(t)}{(T-t)^{\alpha}}\; dt - \frac{f(0)}{\Gamma_{1-\alpha}}  \int_0^T \frac{\phi(s)}{s^{\alpha}}\; ds.
\end{equation}
The three formulas are equivalent; each one can be derived from the others using integration by parts. In the second part of this manuscript we will use all three. We will also use a discrete integration by parts formula, provided in the next proposition.
\begin{proposition}\label{prop:intbyparts_dis}
Let $\phi(t)$ be a bounded function in $(0,T]$, and $f^{(\tau)}(t)$ be a piecewise constant function defined by $f^{(\tau)}(t) = f_0\chf{\{0\}}(t)+ \sum_{k=1}^N f_k \chf{(t_{k-1},t_k]}(t)$.
Then we have the following discrete integration by parts formula:
\begin{align}\label{fml:intbyparts_dis}
    \int_0^T D^{\alpha}_{\tau}f^{(\tau)}(t)  \phi(t)\ dt
    =& -\int_0^{T-\tau} f^{(\tau)}(t)  {}^*D^{\alpha}_{\tau}\left(D^1_{\tau}\Phi\right)(t)\; dt \notag\\
    &+\frac{\Gamma_{\alpha}}{\tau^{\alpha}}  \Phi_N \sum_{j=1}^N \lambda_{N-j+1}  f_j - \frac{\Gamma_{\alpha}}{\tau^{\alpha}}  f_0  \sum_{k=1}^N \lambda_k  \Phi_k,
\end{align}
where $\Phi(t) := \int_0^t \phi(s)\; ds$ and $\Phi_k:=\int_{(k-1)\tau}^{k\tau}\phi(t)\ dt$.
\end{proposition}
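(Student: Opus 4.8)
The plan is to reduce the claimed identity to a purely algebraic statement about finite sums and then establish it by (iterated) discrete summation by parts. First I would exploit the piecewise-constant structure of the integrands. Since $f^{(\tau)}$ and $D^{\alpha}_{\tau}f^{(\tau)}$ are constant on each interval $(t_{k-1},t_k]$, and $\int_{t_{k-1}}^{t_k}\phi = \Phi_k$, the left-hand side collapses to
\begin{equation*}
\int_0^T D^{\alpha}_{\tau}f^{(\tau)}(t)\,\phi(t)\,dt = \sum_{k=1}^N (D^{\alpha}_{\tau}f)_k\,\Phi_k.
\end{equation*}
Likewise, observing that $(D^1_{\tau}\Phi)_k = \Phi_k/\tau$ and that the right discrete derivative is constant on the same intervals, the first term on the right becomes $-\tau\sum_{k=1}^{N-1} f_k\,({}^*D^{\alpha}_{\tau}(D^1_{\tau}\Phi))_k$, the upper limit $N-1$ coming precisely from the integration range $(0,T-\tau]$. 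Thus everything reduces to an identity between two finite bilinear expressions in $\{f_j\}$ and $\{\Phi_k\}$.

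Next I would substitute the definition of $(D^{\alpha}_{\tau}f)_k$ into the left-hand sum and interchange the order of summation, writing
\begin{equation*}
\sum_{k=1}^N (D^{\alpha}_{\tau}f)_k\,\Phi_k = \Gamma_{\alpha}\tau^{-\alpha}\sum_{j=0}^{N-1}(f_{j+1}-f_j)\,S_j,\qquad S_j := \sum_{k=j+1}^N \lambda_{k-j}\,\Phi_k.
\end{equation*}
Then I would apply Abel summation to move the forward difference off $f$, obtaining $\Gamma_{\alpha}\tau^{-\alpha}\bigl(f_N S_{N-1} - f_0 S_0 + \sum_{i=1}^{N-1} f_i(S_{i-1}-S_i)\bigr)$. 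Here $S_0 = \sum_{k=1}^N \lambda_k \Phi_k$ produces the stated $-\frac{\Gamma_{\alpha}}{\tau^{\alpha}}f_0\sum_k\lambda_k\Phi_k$ term directly, while $S_{N-1}=\lambda_1\Phi_N=\Phi_N$ using $\lambda_1=1$.

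The heart of the computation is identifying the interior sum. A reindexing together with $\lambda_1=1$ and a second summation by parts gives
\begin{equation*}
S_{i-1}-S_i = -\sum_{j=i+1}^N \lambda_{j-i}\,(\Phi_j-\Phi_{j-1}) + \lambda_{N-i+1}\,\Phi_N.
\end{equation*}
Since $(D^1_{\tau}\Phi)_j-(D^1_{\tau}\Phi)_{j-1}=(\Phi_j-\Phi_{j-1})/\tau$, multiplying by $\Gamma_{\alpha}\tau^{-\alpha}f_i$ and summing turns the first sum into exactly $-\tau\,f_i\,({}^*D^{\alpha}_{\tau}(D^1_{\tau}\Phi))_i$, which reassembles the first term on the right. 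The leftover $\lambda_{N-i+1}\Phi_N$ then combines with the boundary term $f_N\Phi_N$ (and $\lambda_1=1$) to yield $\Gamma_{\alpha}\tau^{-\alpha}\Phi_N\sum_{j=1}^N\lambda_{N-j+1}f_j$, the middle term. I expect the main obstacle to be precisely this two-layer boundary bookkeeping, since the $\Phi_N$-coefficient is assembled from three different sources—the $f_N$ Abel boundary term, the $j=N$ endpoint of each interior sum, and the explicit middle term—and the two summations by parts must be kept consistent.

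As a safeguard, should the reindexing become unwieldy, I would verify \eqref{fml:intbyparts_dis} directly by coefficient matching: expand both sides as bilinear forms in $\{f_i\}$ and $\{\Phi_k\}$ and check that the coefficient of each product $f_i\Phi_k$ agrees. A short case analysis—$i=0$; $1\le i=k$; $i<k\le N-1$; $k=N$ with $i<N$; and $i>k$—shows both sides give $-\lambda_k$, $\,1$, $\,\lambda_{k-i+1}-\lambda_{k-i}$, $\,\lambda_{N-i+1}-\lambda_{N-i}$, and $0$ respectively. This computational check is completely rigorous and sidesteps the delicate reindexing entirely, so I would use it as the fallback verification.
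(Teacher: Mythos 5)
Your proposal is correct and follows essentially the same route as the paper's proof: both reduce the integrals to the finite bilinear sum $\sum_{k}(D^{\alpha}_{\tau}f)_k\Phi_k$, transfer the difference from $f$ onto $\Phi$ by discrete summation by parts (the paper does the index shift directly on the double sum, while you organize it through the intermediate sums $S_j$ and two Abel summations — a cosmetic difference), and arrive at the identical three-term decomposition, with the interior sum recognized as $-\tau\sum_{k=1}^{N-1}f_k\,({}^*D^{\alpha}_{\tau}(D^1_{\tau}\Phi))_k$ and the $\Phi_N$ and $f_0$ boundary terms assembled exactly as in \eqref{fml:intbyparts_dis}. Your coefficient-matching fallback is also sound (the cases you list do give $-\lambda_k$, $1$, $\lambda_{k-i+1}-\lambda_{k-i}$, $\lambda_{N-i+1}-\lambda_{N-i}$, and $0$ on both sides), so either version of your argument is a valid proof.
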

\begin{proof}
It holds
\begin{align*}
   \int_0^T D^{\alpha}_{\tau}f^{(\tau)}(t)  \phi(t)\ dt=&\frac{\Gamma_{\alpha}}{\tau^{\alpha}} \sum\limits_{k=1}^{N}\left(\int_{(k-1)\tau}^{k\tau}\sum\limits_{j=0}^{k-1}\lambda_{k-j}(f_{j+1}-f_j)  \phi(t)\ dt\right)\\
   =&\frac{\Gamma_{\alpha}}{\tau^{\alpha}} \sum\limits_{k=1}^{N}\sum\limits_{j=0}^{k-1}\lambda_{k-j}(f_{j+1}-f_j)  \int_{(k-1)\tau}^{k\tau}\phi(t)\ dt.
\end{align*}
We have
$\left(D^1_{\tau} \Phi\right)_k= \left(\Phi(k\tau) - \Phi((k-1)\tau)\right)/\tau = \Phi_k/\tau .$
Now rewrite the double summation as follows:
\begin{align*}
   \sum_{k=1}^{N} \sum\limits_{j=0}^{k-1}\lambda_{k-j}  (f_{j+1}-f_j)\Phi_k
   =&\sum_{k=1}^{N} \sum\limits_{j=1}^{k}\lambda_{k-j+1}  f_j\Phi_k-\sum_{k=1}^{N} \sum\limits_{j=0}^{k-1}\lambda_{k-j}f_j\Phi_k\\
   =&\sum_{k=2}^{N+1}\sum_{j=1}^{k-1} \lambda_{k-j}  f_j  \Phi_{k-1} - \sum_{k=1}^N\sum_{j=0}^{k-1} \lambda_{k-j}  f_j  \Phi_k\\
   =&\sum_{k=2}^N\sum_{j=1}^{k-1} \lambda_{k-j}  f_j   (\Phi_{k-1}-\Phi_k) +\sum_{j=1}^N \lambda_{N-j+1}  f_j  \Phi_N - \sum_{k=1}^N \lambda_k   f_0  \Phi_k\\
   =&\sum_{k=1}^{N-1}f_k \sum_{j=k+1}^N \lambda_{j-k}  (\Phi_{j-1}-\Phi_j) + \Phi_N \sum_{j=1}^N \lambda_{N-j+1}  f_j - f_0  \sum_{k=1}^N \lambda_k  \Phi_k.
\end{align*}
Hence
\begin{align*}
    \int_0^T D^{\alpha}_{\tau}f^{(\tau)}(t)  \phi(t)\ dt
    =&\;\frac{\Gamma_{\alpha}}{\tau^{\alpha}}  \sum_{k=1}^{N-1}f_k \sum_{j=k+1}^N \lambda_{j-k}  (\Phi_{j-1}-\Phi_j)\\
    &+\frac{\Gamma_{\alpha}}{\tau^{\alpha}}  \Phi_N \sum_{j=1}^N \lambda_{N-j+1}  f_j - \frac{\Gamma_{\alpha}}{\tau^{\alpha}}  f_0  \sum_{k=1}^N \lambda_k  \Phi_k\\
    =& -\int_0^{T-\tau} f^{(\tau)}(t)  {}^*D^{\alpha}_{\tau}\left(D^1_{\tau}\Phi\right)(t)\; dt \notag\\
    &+\frac{\Gamma_{\alpha}}{\tau^{\alpha}}  \Phi_N \sum_{j=1}^N \lambda_{N-j+1}  f_j - \frac{\Gamma_{\alpha}}{\tau^{\alpha}}  f_0  \sum_{k=1}^N \lambda_k  \Phi_k.
\end{align*}

\end{proof}

We have the following convergence theorem. 

\begin{proposition}\label{prop.conv.dis.byparts}
Provided $f^{(\tau)}(t):=\sum_{j=1}^N f_j  \chf{(t_{j-1},t_j]}(t)$ is strongly convergent in $L^{1}(0,T)$, for any smooth function $\phi\in C^1(0,T)$ we have 
\begin{align*}
\int_0^TD^{\alpha}_{\tau}f^{(\tau)}(t) \phi(t)\; dt \rightarrow & -\int_0^Tf(t)  {}^*D^{\alpha}_t\phi(t)\; dt\ +\ \frac{\phi(T)}{\Gamma_{1-\alpha}} \int_0^T \frac{f(t)}{(T-t)^{\alpha}}\; dt\ - \  \frac{f_0}{\Gamma_{1-\alpha}}  \int_0^T \frac{\phi(s)}{s^{\alpha}}\; ds,
\end{align*}
as $\tau\rightarrow 0$. Moreover, if function $f$ is smooth enough, it holds
$$\int_0^TD^{\alpha}_{\tau}f^{(\tau)}(t) \phi(t)\; dt \rightarrow \int_0^T D^{\alpha}_tf(t)\phi(t)\; dt, \quad\mbox{as $\tau\rightarrow 0$.}$$
\begin{proof}
Let $\Phi(t) := \int_0^t \phi(s)\; ds$ and $\Phi_k:=\int_{(k-1)\tau}^{k\tau}\phi(t)\ dt$ as in \Cref{prop:intbyparts_dis}. Proving the above convergence  is equivalent to showing that 
\begin{equation*}\label{dis.byparts.conv123}
-\int_0^{T-\tau} f^{(\tau)}(t)  {}^*D^{\alpha}_{\tau}\left(D^1_{\tau}\Phi\right)(t)\; dt\ +\ \frac{\Gamma_{\alpha}}{\tau^{\alpha}}  \Phi_N \sum_{j=1}^N \lambda_{N-j+1}  f_j\ - \ \frac{\Gamma_{\alpha}}{\tau^{\alpha}}  f_0  \sum_{k=1}^N \lambda_k  \Phi_k
\end{equation*}
\mbox{converges to }
\begin{equation*} -\int_0^Tf(t)  {}^*D^{\alpha}_t\phi(t)\; dt\ +\ \frac{\phi(T)}{\Gamma_{1-\alpha}} \int_0^T \frac{f(t)}{(T-t)^{\alpha}}\; dt\ - \  \frac{f_0}{\Gamma_{1-\alpha}}  \int_0^T \frac{\phi(s)}{s^{\alpha}}\; ds,
\end{equation*}
as $\tau\rightarrow 0$. To see why, one should compare \eqref{byparts.proposed} and \eqref{fml:intbyparts_dis}. 
Using (\ref{est_sum_lambda}), we have 
\begin{align*}
 \norm{{}^*D^{\alpha}_{\tau}\left(D^1_{\tau}\Phi\right)}_{L^\infty(0,T)} =& 
 \max_{0\leq k\leq N-1} \left| \sum_{j=k+1}^N\tau^{1-\alpha}\lambda_{j-k}(D^2_{\tau}\Phi)_j \right| \\
\leq & \sup\limits_{t\in[0,T]}|D^2_t\Phi|   \tau^{1-\alpha} 
\max_{0\leq k\leq N-1}\sum_{j=k+1}^N\lambda_{j-k}\\
= & \sup\limits_{t\in[0,T]}|D^2_t\Phi|   \tau^{1-\alpha} \max_{0\leq k\leq N-1}\sum_{j=1}^{N-k}\lambda_{j} \\
\le&  \sup\limits_{t\in[0,T]}|D^2_t\Phi|   \tau^{1-\alpha} \max_{0\leq k\leq N-1}(N-k)^{1-\alpha}\\
\le&  \norm{\phi}_{C^1(0,T)}  T^{1-\alpha}.
\end{align*}
Hence ${}^*D^{\alpha}_{\tau}\left(D^1_{\tau}\Phi\right)\rightharpoonup^*\xi$
weakly* in $L^\infty(0,T)$. In particular 
${}^*D^{\alpha}_{\tau}\left(D^1_{\tau}\Phi\right)\rightharpoonup\xi$
weakly in $L^p(0,T)$ for every $p<\infty$, so we can employ \Cref{conv.dis.caputo} to identify the limit: $\xi = {}^*D^{\alpha}_t\phi$.
Since $f^{(\tau)}\to f$ strongly in $L^1(0,T)$ we deduce
\begin{equation*}\label{dis.bypart.conv1}
\int_0^{T-\tau} f^{(\tau)}(t)  {}^*D^{\alpha}_{\tau}\left(D^1_{\tau}\Phi\right)(t)\; dt\rightarrow \int_0^Tf(t)  {}^*D^{\alpha}_t\phi(t)\; dt,\ \textrm{ as }\tau \rightarrow 0.
\end{equation*}
With $\lambda^{(\tau)}(s):=\tau^{-\alpha}\sum_{j=1}^n\lambda_j \chf{(t_{j-1},t_j]}(s)$ we have
\begin{align*}
\frac{\Gamma_{\alpha}}{\tau^{\alpha}}  \Phi_N \sum_{j=1}^N \lambda_{N-j+1}  f_j
=& \ \Gamma_{\alpha} \int_{T-\tau}^{T}\frac{\phi(s)}{\tau}\; ds  \int_0^T \lambda^{(\tau)}(T-t)   f^{(\tau)}(t)\; dt,\notag
\end{align*}
which converges to
\begin{equation*}
\frac{\phi(T)}{\Gamma_{1-\alpha}} \int_0^T \frac{f(t)}{(T-t)^{\alpha}}\; dt
\end{equation*}
as $\tau\rightarrow 0$ by \eqref{weak.conv.lambda.Convolution}. Finally, using \eqref{weak.conv.lambda} we get 
\begin{align}\label{dis.bypart.conv3}
\frac{\Gamma_{\alpha}}{\tau^{\alpha}}  f_0  \sum_{k=1}^N \lambda_k  \Phi_k
\rightarrow \ \frac{f_0}{\Gamma_{1-\alpha}}\int_0^T\frac{\phi(s)}{s^{\alpha}}\; ds.
\end{align}
This finishes the proof.
\end{proof}
\end{proposition}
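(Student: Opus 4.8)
The plan is to reduce the statement to the limits of three explicit terms by applying the discrete integration-by-parts identity \eqref{fml:intbyparts_dis} of \Cref{prop:intbyparts_dis}. Writing $\Phi(t)=\int_0^t\phi$ and $\Phi_k=\int_{(k-1)\tau}^{k\tau}\phi$, that identity expresses $\int_0^T D^\alpha_\tau f^{(\tau)}\phi\,dt$ as the sum of an interior term $-\int_0^{T-\tau}f^{(\tau)}\,{}^*D^\alpha_\tau(D^1_\tau\Phi)\,dt$ and the two boundary terms $\frac{\Gamma_\alpha}{\tau^\alpha}\Phi_N\sum_{j=1}^N\lambda_{N-j+1}f_j$ and $-\frac{\Gamma_\alpha}{\tau^\alpha}f_0\sum_{k=1}^N\lambda_k\Phi_k$. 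Since this is exactly the discrete analogue of the continuous formula \eqref{byparts.proposed}, it suffices to show that each of the three discrete terms converges to the corresponding continuous term as $\tau\to 0$.

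For the interior term, the main step is to prove that ${}^*D^\alpha_\tau(D^1_\tau\Phi)\rightharpoonup^*{}^*D^\alpha_t\phi$ weakly-$*$ in $L^\infty(0,T)$. First I would establish a uniform bound: since $D^1_\tau\Phi$ and $D^2_\tau\Phi$ are the step-averages of $\phi$ and $\phi'$, they are controlled by $\|\phi\|_{C^1(0,T)}$, and the estimate $\sum_{j=1}^m\lambda_j\le m^{1-\alpha}$ from \eqref{est_sum_lambda} then yields $\|{}^*D^\alpha_\tau(D^1_\tau\Phi)\|_{L^\infty(0,T)}\le \|\phi\|_{C^1(0,T)}T^{1-\alpha}$. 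This gives weak-$*$ precompactness, and the limit is identified by the time-reflected version of \Cref{conv.dis.caputo} (the right discrete Caputo derivative being the left one after $t\mapsto T-t$), so every weak-$*$ limit equals ${}^*D^\alpha_t\phi$. Combining this weak-$*$ $L^\infty$ convergence of $g_\tau:={}^*D^\alpha_\tau(D^1_\tau\Phi)$ to $g:={}^*D^\alpha_t\phi$ with the strong $L^1$ convergence $f^{(\tau)}\to f$ — splitting $\int f^{(\tau)}g_\tau-\int fg$ into $\int(f^{(\tau)}-f)g_\tau$ (controlled by $\|f^{(\tau)}-f\|_{L^1}\|g_\tau\|_{L^\infty}$) and $\int f(g_\tau-g)$ (vanishing by weak-$*$ convergence) — shows the interior term tends to $-\int_0^Tf\,{}^*D^\alpha_t\phi\,dt$.

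For the two boundary terms I would invoke the convergences already proved for $\lambda^{(\tau)}(s)=\tau^{-\alpha}\sum_j\lambda_j\chf{(t_{j-1},t_j]}(s)$. Rewriting the second term as $\Gamma_\alpha\bigl(\tfrac1\tau\int_{T-\tau}^T\phi\bigr)\int_0^T\lambda^{(\tau)}(T-t)f^{(\tau)}(t)\,dt$ and substituting $s=T-t$, the integral becomes the reflected convolution of $\lambda^{(\tau)}$ with $f^{(\tau)}$, which by \eqref{weak.conv.lambda.Convolution} (at the endpoint $t=T$) converges to $\frac{1}{\Gamma_\alpha\Gamma_{1-\alpha}}\int_0^T(T-t)^{-\alpha}f(t)\,dt$, while the prefactor $\tfrac1\tau\int_{T-\tau}^T\phi\to\phi(T)$; together this gives $\frac{\phi(T)}{\Gamma_{1-\alpha}}\int_0^T(T-t)^{-\alpha}f\,dt$. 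For the third term, I would recognize $\frac{\Gamma_\alpha}{\tau^\alpha}\sum_k\lambda_k\Phi_k$ as the pairing of $\lambda^{(\tau)}$, which converges weakly in $L^p$ ($p<1/\alpha$) to $\frac{t^{-\alpha}}{\Gamma_\alpha\Gamma_{1-\alpha}}$ by \eqref{weak.conv.lambda}, against the step-averages $\Phi_k/\tau$ of $\phi$, which converge strongly in $L^{p'}$ to $\phi$ since $\phi\in C^1$; the weak–strong product then converges to $\frac{1}{\Gamma_{1-\alpha}}\int_0^T s^{-\alpha}\phi\,ds$, and multiplying by $f_0$ finishes this term.

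Summing the three limits proves the first assertion. The second claim follows immediately: if $f$ is regular enough for the continuous integration-by-parts formula \eqref{byparts.proposed} to hold (with $f(0)=f_0$), then the limit just obtained coincides with the right-hand side of \eqref{byparts.proposed}, namely $\int_0^T D^\alpha_t f\,\phi\,dt$. The principal obstacle is the interior term: one must simultaneously produce the uniform $L^\infty$ bound on ${}^*D^\alpha_\tau(D^1_\tau\Phi)$ from the $\lambda$-sum estimate, correctly identify its weak-$*$ limit through the reflected form of \Cref{conv.dis.caputo}, and justify the mixed strong–weak-$*$ passage in the product with $f^{(\tau)}$; the boundary terms, by contrast, reduce directly to the already-established convergences \eqref{weak.conv.lambda} and \eqref{weak.conv.lambda.Convolution}.
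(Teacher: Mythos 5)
Your proposal is correct and follows essentially the same route as the paper's proof: the same reduction via the discrete integration-by-parts identity \eqref{fml:intbyparts_dis}, the same uniform $L^\infty$ bound on ${}^*D^{\alpha}_{\tau}(D^1_{\tau}\Phi)$ from \eqref{est_sum_lambda} with limit identification through \Cref{conv.dis.caputo}, and the same treatment of the two boundary terms via \eqref{weak.conv.lambda.Convolution} and \eqref{weak.conv.lambda}. Your explicit remarks on the weak-$*$/strong splitting of the interior term and on the time reflection needed to apply \Cref{conv.dis.caputo} to the right Caputo derivative are slightly more careful write-ups of steps the paper leaves implicit, not a different argument.
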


The following two lemmas will be useful.
\begin{lemma}\label{lem:fact1}
If $(D^{\alpha}_{\tau}f)_k\leq0$ for $1\leq k\leq n$, then
$f_{k}\leq f_{0}$ for $1\leq k\leq n$.
\end{lemma}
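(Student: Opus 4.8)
The plan is to deduce this directly from the discrete fundamental theorem of calculus, \eqref{eq:discreteFToC} in Proposition \ref{prop:discreteFToC}, which expresses each $f_m$ as $f_0$ plus a positively weighted sum of the discrete Caputo derivatives up to index $m$. The whole statement then reduces to a sign check on the weights.

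First I would fix an arbitrary index $m$ with $1\leq m\leq n$ and apply \eqref{eq:discreteFToC} at $m$ rather than at $n$, writing
\begin{align*}
f_m = f_0 + \frac{\tau^\alpha}{\Gamma_\alpha}\sum_{k=1}^m (m-k+1)^{\alpha-1}(D^\alpha_\tau f)_k .
\end{align*}
The point to observe is that every coefficient appearing here is strictly positive: the prefactor $\tau^\alpha/\Gamma_\alpha>0$, and for each $k$ with $1\leq k\leq m$ the base $m-k+1$ is a positive integer in $\{1,\dots,m\}$, so $(m-k+1)^{\alpha-1}>0$ regardless of the sign of the exponent $\alpha-1$.

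Next I would use the hypothesis. Since $1\leq k\leq m\leq n$, all indices $k$ in the sum lie in the range where $(D^\alpha_\tau f)_k\leq 0$ is assumed. Hence every summand $(m-k+1)^{\alpha-1}(D^\alpha_\tau f)_k$ is $\leq 0$, and therefore
\begin{align*}
f_m - f_0 = \frac{\tau^\alpha}{\Gamma_\alpha}\sum_{k=1}^m (m-k+1)^{\alpha-1}(D^\alpha_\tau f)_k \leq 0 .
\end{align*}
As $m\in\{1,\dots,n\}$ was arbitrary, this gives $f_m\leq f_0$ for all $1\leq m\leq n$, which is the claim.

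I do not anticipate a genuine obstacle here: the only substantive ingredient is the representation \eqref{eq:discreteFToC}, which is already established, and the argument is purely a positivity-of-weights observation. The one point deserving a word of care is that the monotonicity must be proved at every intermediate index $m$, not just at $n$; this is why I apply the identity at each $m$ separately rather than once at the endpoint. That the convolution-type weights $(m-k+1)^{\alpha-1}$ stay positive for the subcritical exponent $\alpha-1<0$ is exactly what makes the discrete Caputo operator order-preserving in this sense, mirroring the positivity of the kernel $(t-s)^{\alpha-1}$ in the continuous identity \eqref{thm:funthm_calc_cont}.
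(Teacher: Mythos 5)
Your proof is correct and is precisely the argument the paper intends: its proof of Lemma \ref{lem:fact1} consists of the single remark that the claim ``follows directly from \eqref{eq:discreteFToC}'', and your write-up simply makes explicit the positivity of the weights $(m-k+1)^{\alpha-1}$ and the application of the identity at every index $m\leq n$. No gap, and no difference in approach.
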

\begin{proof}
It follows directly from \eqref{eq:discreteFToC}.
\end{proof}

\begin{lemma}\label{lem:fact2} We have
$$f_k  (D^{\alpha}_{\tau}f)_k\geq \frac{1}{2}(D^{\alpha}_{\tau}f^2)_k,\quad
k\geq 1. $$
\end{lemma}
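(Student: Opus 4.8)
The plan is to reduce the claim to a weighted telescoping inequality and then exploit the monotonicity of $\{\lambda_k\}$ established in \eqref{est_sum_lambda}. Since the common prefactor $\Gamma_\alpha\tau^{-\alpha}$ in the definition of the discrete Caputo derivative is strictly positive, it suffices to prove that $\sum_{j=0}^{k-1}\lambda_{k-j}\bigl[f_k(f_{j+1}-f_j)-\tfrac12(f_{j+1}^2-f_j^2)\bigr]\ge 0$. The conceptual crux is the elementary identity $f_k(f_{j+1}-f_j)-\tfrac12(f_{j+1}^2-f_j^2)=-\tfrac12\bigl[(f_k-f_{j+1})^2-(f_k-f_j)^2\bigr]$, obtained by expanding both squares on the right. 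Introducing the nonnegative sequence $g_j:=(f_k-f_j)^2\ge 0$, for which $g_k=0$, this recasts the quantity $f_k(D^\alpha_\tau f)_k-\tfrac12(D^\alpha_\tau f^2)_k$ as $-\tfrac12\Gamma_\alpha\tau^{-\alpha}\sum_{j=0}^{k-1}\lambda_{k-j}(g_{j+1}-g_j)$, so the whole matter reduces to showing that the inner sum is nonpositive.

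Next I would apply a discrete summation by parts (Abel summation) to $\sum_{j=0}^{k-1}\lambda_{k-j}(g_{j+1}-g_j)$. Splitting into $\sum_{i=1}^{k}\lambda_{k-i+1}g_i-\sum_{j=0}^{k-1}\lambda_{k-j}g_j$, discarding the vanishing term $\lambda_1 g_k$ (because $g_k=0$), and isolating the $j=0$ contribution yields the compact expression $\sum_{i=1}^{k-1}(\lambda_{k-i+1}-\lambda_{k-i})\,g_i-\lambda_k g_0$. For $1\le i\le k-1$ the index $k-i$ is a positive integer, so the strict monotonicity $\lambda_{m+1}<\lambda_m$ from \eqref{est_sum_lambda} forces $\lambda_{k-i+1}-\lambda_{k-i}<0$; combined with $g_i\ge 0$, $g_0\ge 0$ and $\lambda_k>0$, every term of this expression is nonpositive. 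Hence $\sum_{j=0}^{k-1}\lambda_{k-j}(g_{j+1}-g_j)\le 0$, and multiplying by $-\tfrac12\Gamma_\alpha\tau^{-\alpha}<0$ reverses the sign and gives exactly $f_k(D^\alpha_\tau f)_k\ge\tfrac12(D^\alpha_\tau f^2)_k$.

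The only substantive ingredient is the monotonicity of the memory weights $\{\lambda_k\}$, which is precisely what makes the Abel-summation differences $\lambda_{k-i+1}-\lambda_{k-i}$ carry the correct sign; this is already in hand from \eqref{est_sum_lambda}, so I do not expect a genuine obstacle beyond the bookkeeping in the reindexing. The interpretation worth recording is that this computation is the discrete analogue of the classical convexity inequality $f\,D^\alpha_t f\ge\tfrac12\,D^\alpha_t f^2$: the defect in the discrete chain rule is the discrete Caputo derivative of the nonnegative sequence $g_j=(f_k-f_j)^2$ tested against a decreasing kernel, and the boundary term $-\lambda_k g_0$ (the memory of the initial datum) happens to have the favorable sign as well.
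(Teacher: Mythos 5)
Your proof is correct and takes essentially the same route as the paper's: both arguments combine summation by parts with the positivity and monotonicity of $\{\lambda_k\}$, and both ultimately exhibit the difference $f_k(D^\alpha_\tau f)_k-\tfrac12(D^\alpha_\tau f^2)_k$ as the manifestly nonnegative quantity $\tfrac{\Gamma_\alpha\tau^{-\alpha}}{2}\bigl[\lambda_k(f_k-f_0)^2+\sum_{j=1}^{k-1}(\lambda_{k-j}-\lambda_{k-j+1})(f_k-f_j)^2\bigr]$. The only difference is organizational: you complete the square termwise first (via $g_j=(f_k-f_j)^2$) and then Abel-sum, whereas the paper Abel-sums the original expression first and then completes squares, arriving at the same certificate.
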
 
\begin{proof}
We have to show for $k \geq 1$ that
$$
\sum\limits_{j=0}^{k-1}\lambda_{k-j}({f_{j+1}-f_{j}})f_{k}\geq\frac{1}{2}\sum_{j=0}^{k-1}\lambda_{k-j}({f_{j+1}^{2}-f_{j}^{2}}).
$$
First, we rewrite the left hand side of the above inequality as
\begin{align*}
\sum\limits_{j=0}^{k-1}\lambda_{k-j}({f_{j+1}-f_{j}})f_{k}
=& -\lambda_{k}f_{0}f_{k}-\sum_{j=1}^{k-1}(\lambda_{k-j+1}-\lambda_{k-j})f_{j}f_{k}+\lambda_1f_{k}^{2}\\
=& \frac{1}{2}\lambda_k(f_{0}-f_{k})^{2}+\frac{1}{2}\sum_{j=1}^{k-1}(\lambda_{k-j+1}-\lambda_{k-j})(f_{j}-f_{k})^{2}+\lambda_1f_{k}^{2}\\
& -\frac{1}{2}\lambda_kf_{0}^{2}-\frac{1}{2}\lambda_kf_{k}^{2}-\frac{1}{2}\sum_{j=1}^{k-1}(\lambda_{k-j}-\lambda_{k-j+1})(f_{j}^{2}+f_{k}^{2}),
\end{align*}
which implies
\begin{align*}
\sum\limits_{j=0}^{k-1}\lambda_{k-j}({f_{j+1}-f_{j}})f_{k}\geq & -\frac{1}{2}\lambda_kf_{0}^{2}-\frac{1}{2}\lambda_kf_{k}^{2}-\frac{1}{2}\sum_{j=1}^{k-1}(\lambda_{k-j}-\lambda_{k-j+1})(f_{j}^{2}+f_{k}^{2})+\lambda_1f_{k}^{2}\\
= & \frac{1}{2}\sum_{j=0}^{k-1}\lambda_{k-j}({f_{j+1}^{2}-f_{j}^{2}}).
\end{align*}
Thus, the lemma is proved.
\end{proof}

\subsection{Compactness results}
In this section we prove \Cref{thm:mainII}.
For convenience we first recall the classical version of the Aubin-Lions lemma.
\begin{lemma}[Theorem 5 in \cite{simon1986compact}]\label{lem:simon}
Assume that $X$, $B$ and $Y$ are Banach spaces, with $X\hookrightarrow B\hookrightarrow Y$, where $X$ is compactly embedded in $B$. For $1\leq r\leq \infty$, assume that $F$ is bounded in $L^r(0,T;X)$ and $\norm{f(t+h,x)-f(t,x)}_{L^r(0,T-h;Y)}\rightarrow 0$ as $h \rightarrow 0$, uniformly for all $f\in F$. Then $F$ is relatively compact in $L^r(0,T;B)$ (and in $C([0,T];B)$ if $r=\infty$).
\end{lemma}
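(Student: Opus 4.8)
The plan is to deduce this statement from two classical ingredients: Ehrling's interpolation inequality (which is where the compactness of $X\hookrightarrow B$ enters) and the vector-valued Fréchet–Kolmogorov compactness criterion. Since $X\hookrightarrow B\hookrightarrow Y$ with the first embedding compact and the second continuous, a standard contradiction argument produces, for every $\eta>0$, a constant $C_\eta$ such that
\[
\|v\|_B \leq \eta\|v\|_X + C_\eta\|v\|_Y,\qquad v\in X.
\]
This is the only place where compactness of the embedding is used, and it is the crux of the whole argument: it lets one transfer time-regularity from the weak space $Y$ to the intermediate space $B$ at the cost of the uniform $X$-bound.

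First I would treat the case $1\leq r<\infty$. I would invoke the characterization that a bounded family $F\subset L^r(0,T;B)$ is relatively compact if and only if (i) the time translates are uniformly small, $\sup_{f\in F}\|f(\cdot+h)-f\|_{L^r(0,T-h;B)}\to 0$ as $h\to 0$, and (ii) for every $0<t_1<t_2<T$ the set of Bochner integrals $\{\int_{t_1}^{t_2}f(t)\,dt:f\in F\}$ is relatively compact in $B$. Boundedness of $F$ in $L^r(0,T;B)$ is immediate from the $L^r(0,T;X)$ bound and the continuity of $X\hookrightarrow B$, so it remains to verify (i) and (ii).

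For (i), I would apply the Ehrling inequality to $v=f(t+h)-f(t)$, bound $\|v\|_X\leq\|f(t+h)\|_X+\|f(t)\|_X$, and take the $L^r$-norm in $t$ over $(0,T-h)$, obtaining
\[
\|f(\cdot+h)-f\|_{L^r(0,T-h;B)} \leq 2\eta\,\|f\|_{L^r(0,T;X)} + C_\eta\,\|f(\cdot+h)-f\|_{L^r(0,T-h;Y)}.
\]
Given $\varepsilon>0$, I would first fix $\eta$ small enough that the first term is below $\varepsilon/2$ uniformly over $F$ (using the uniform $X$-bound), and then use the hypothesised uniform smallness of the $Y$-translates to push the second term below $\varepsilon/2$ for $h$ small. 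For (ii), Hölder's inequality gives $\|\int_{t_1}^{t_2}f\,dt\|_X\leq(t_2-t_1)^{1/r'}\|f\|_{L^r(t_1,t_2;X)}\leq C$, so these integrals form a bounded subset of $X$; the compact embedding $X\hookrightarrow B$ then renders them relatively compact in $B$. Together, (i) and (ii) give relative compactness in $L^r(0,T;B)$.

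Finally, for the endpoint $r=\infty$ I would switch to an Arzelà–Ascoli argument in $C([0,T];B)$. In this case the hypothesis says $F$ is uniformly equicontinuous into $Y$ and bounded in $L^\infty(0,T;X)$; the Ehrling inequality upgrades the equicontinuity into $C([0,T];B)$, while the pointwise values $\{f(t):f\in F,\ t\in[0,T]\}$, being bounded in $X$, are relatively compact in $B$. Arzelà–Ascoli for $B$-valued continuous functions then yields relative compactness in $C([0,T];B)$. I expect the main obstacle to be precisely the step isolated above — transferring time-regularity from $Y$ to $B$ via Ehrling — together with pinning down the correct vector-valued compactness criterion, including the averaging condition (ii) and the separate treatment of the $r=\infty$ endpoint.
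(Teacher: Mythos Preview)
The paper does not prove this lemma: it is stated as a recall of Theorem~5 in Simon \cite{simon1986compact} and is used as a black box in the proof of Theorem~\ref{thm:mainII}. There is therefore no ``paper's own proof'' to compare against. Your sketch is a correct outline of the standard proof of Simon's theorem --- Ehrling's inequality to upgrade $Y$-equicontinuity to $B$-equicontinuity at the cost of the uniform $L^r(0,T;X)$ bound, together with the averaging/Fr\'echet--Kolmogorov criterion for relative compactness in $L^r(0,T;B)$ --- and this is essentially how Simon himself proceeds. One small point worth tightening in the $r=\infty$ case: elements of $L^\infty(0,T;X)$ need not have continuous representatives a priori, so before invoking Arzel\`a--Ascoli you should note that the uniform-translate hypothesis in $Y$ forces each $f$ (after modification on a null set) to be uniformly continuous into $Y$, and then Ehrling upgrades this to uniform continuity into $B$.
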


Before we start with the proof of \Cref{thm:mainII} we need to prove three technical lemmas. 
\begin{lemma}\label{lem.1}
For $0<\alpha<1$, it holds that
\begin{align}\label{dfDaf}
\|f( \cdot + h)-f\|_{L^p(0,T-h; Y)}\leq 
\frac{2 h^{\alpha}}{\Gamma_\alpha\alpha}\|D_t^\alpha f\|_{L^p(0,T; Y)} ,\quad h>0,
\end{align}
for every $1\leq p\leq \infty$ and every $f : [0,T]\to Y$ such that $f$, $D_t^\alpha f\in L^p(0,T; Y)$.	
\end{lemma}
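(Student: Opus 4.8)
The plan is to read off a pointwise bound on $\|f(t+h)-f(t)\|_Y$ directly from the fractional fundamental theorem of calculus \eqref{thm:funthm_calc_cont} and then recognize that bound as a convolution, to which Young's inequality applies. Writing $g:=D_t^\alpha f$, equation \eqref{thm:funthm_calc_cont} gives
$$f(t+h)-f(t)=\frac{1}{\Gamma_\alpha}\left(\int_0^{t+h}(t+h-s)^{\alpha-1}g(s)\,ds-\int_0^{t}(t-s)^{\alpha-1}g(s)\,ds\right),$$
the constant term $f(0)$ cancelling. Splitting the first integral at $s=t$, I would rewrite this as
$$f(t+h)-f(t)=\frac{1}{\Gamma_\alpha}\left(\int_0^{t}\big((t+h-s)^{\alpha-1}-(t-s)^{\alpha-1}\big)g(s)\,ds+\int_t^{t+h}(t+h-s)^{\alpha-1}g(s)\,ds\right).$$

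First I would exploit the sign: since $0<\alpha<1$ the map $x\mapsto x^{\alpha-1}$ is decreasing, so for $s<t$ the factor $(t+h-s)^{\alpha-1}-(t-s)^{\alpha-1}$ is negative, with absolute value $(t-s)^{\alpha-1}-(t+h-s)^{\alpha-1}$, while the second integrand is nonnegative. Taking $Y$-norms and setting $u(s):=\|g(s)\|_Y$, extended by zero outside $[0,T]$, I obtain the scalar pointwise majorization
$$\|f(t+h)-f(t)\|_Y\leq\frac{1}{\Gamma_\alpha}\int_{\mathbb{R}}\kappa_h(t-s)\,u(s)\,ds=\frac{1}{\Gamma_\alpha}\,(\kappa_h\ast u)(t),$$
where the nonnegative kernel is $\kappa_h(r)=r^{\alpha-1}-(r+h)^{\alpha-1}$ for $r>0$, $\kappa_h(r)=(r+h)^{\alpha-1}$ for $-h<r\leq 0$, and $\kappa_h(r)=0$ otherwise. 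Since everything has been reduced to a scalar convolution at the pointwise level, the vector-valued nature of $f$ plays no further role.

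The conclusion then follows from Young's convolution inequality $\|\kappa_h\ast u\|_{L^p(\mathbb{R})}\leq\|\kappa_h\|_{L^1(\mathbb{R})}\|u\|_{L^p(\mathbb{R})}$, valid for all $1\leq p\leq\infty$, together with the computation of $\|\kappa_h\|_{L^1}$. The piece over $(-h,0)$ contributes $\int_{-h}^0(r+h)^{\alpha-1}\,dr=h^\alpha/\alpha$, and the piece over $(0,\infty)$ contributes $\int_0^\infty\big(r^{\alpha-1}-(r+h)^{\alpha-1}\big)\,dr=h^\alpha/\alpha$, so that $\|\kappa_h\|_{L^1}=2h^\alpha/\alpha$; restricting $t$ to $(0,T-h)$ and noting $\|u\|_{L^p(\mathbb{R})}=\|D_t^\alpha f\|_{L^p(0,T;Y)}$ then yields the stated estimate. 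The one point requiring care, and the main technical obstacle, is the second integral: it is improper at both ends, and I would evaluate it as $\lim_{R\to\infty}\frac{R^\alpha-(R+h)^\alpha+h^\alpha}{\alpha}$, using $R^\alpha-(R+h)^\alpha\to 0$ (which holds precisely because $\alpha<1$) to get the finite value $h^\alpha/\alpha$; the singularity of $r^{\alpha-1}$ at $r=0$ is harmless since $\alpha-1>-1$.
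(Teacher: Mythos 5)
Your proof is correct, and it shares with the paper the same starting point and the same decomposition: both write $f(t+h)-f(t)$ via the fractional fundamental theorem of calculus \eqref{thm:funthm_calc_cont} and split it into the integral over $(0,t)$ carrying the kernel difference $(t+h-s)^{\alpha-1}-(t-s)^{\alpha-1}$ and the tail integral over $(t,t+h)$. Where you genuinely diverge is in how the $L^p$ estimate is extracted. The paper invokes Riesz--Thorin interpolation to reduce to the endpoint cases $p=1$ and $p=\infty$, and then estimates the two pieces $\mathcal{I}_1,\mathcal{I}_2$ separately in each case (a Fubini/order-exchange computation for $p=1$, a direct kernel integration for $p=\infty$). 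You instead dominate the whole difference pointwise by a single scalar convolution $\frac{1}{\Gamma_\alpha}(\kappa_h\ast u)(t)$, with $u(s)=\|D_t^\alpha f(s)\|_Y$ extended by zero, and apply Young's inequality $\|\kappa_h\ast u\|_{L^p}\leq\|\kappa_h\|_{L^1}\|u\|_{L^p}$, which covers all $1\leq p\leq\infty$ in one stroke; the constant emerges as $\|\kappa_h\|_{L^1}=2h^\alpha/\alpha$, each half of the kernel contributing $h^\alpha/\alpha$ --- exactly the two quantities the paper computes for $\mathcal{I}_1$ and $\mathcal{I}_2$ at the endpoints. Your route buys economy and uniformity: one standard inequality (itself provable by Minkowski's integral inequality, so nothing heavier than the paper's tools is being smuggled in) replaces the interpolation step and the two endpoint computations, and your handling of the improper integral $\int_0^\infty\bigl(r^{\alpha-1}-(r+h)^{\alpha-1}\bigr)\,dr=h^\alpha/\alpha$, where $\alpha<1$ is needed for the decay $R^\alpha-(R+h)^\alpha\to 0$, is exactly right. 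What the paper's version buys in exchange is that everything stays at the level of explicit iterated integrals with no reference to convolution structure; the two arguments produce the identical constant $2h^\alpha/(\Gamma_\alpha\alpha)$.
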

\begin{lemma}\label{lem.2}
Let $Y$ be a Banach space, and let $f^{(\tau)} : [0,T]\to Y$
be a piecewise constant-in-time function, namely $f^{(\tau)}(t) = f_0\chf{\{0\}}(t) + \sum_{n=1}^N
 f_n\chf{(t_{n-1},t_n]}(t)$, with $t_j = j\tau$, $f_j\in Y$ ($j=0,\ldots,N$), and $T=N\tau$. The linear interpolant $\tft$ of $f_0,\ldots,f_N$ is given by
\begin{equation}\label{eq:lin.intplt}
\tft(t) = \sum_{n=0}^{N-1}\left( \frac{t-t_n}{\tau}(f_{n+1}-f_n) + f_n \right)\chf{[t_n,t_{n+1})}(t) .
\end{equation} 
Then for $0<\alpha<1$, a constant $C_\alpha>0$ indepedent of $f_0,\ldots,f_N$ exists such that
\begin{align*}
\|D_t^\alpha\tft\|_{L^p(0,T; Y)}\leq C_\alpha \|D^\alpha_\tau\ft\|_{L^p(0,T; Y)},\quad\tau>0,\quad 1\leq p\leq\infty. 
\end{align*}
\end{lemma}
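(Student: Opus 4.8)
The plan is to recast the inequality as a bound for a single, data-independent convolution operator and then invoke Young's inequality, which automatically produces a constant uniform in $p$ (and in particular covers the endpoint $p=\infty$).

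First I would make the left-hand side explicit. Since $\tft$ is piecewise linear, its classical derivative is the piecewise constant function $G(s):=(\tft)'(s)=\tau^{-1}(f_{n+1}-f_n)$ for $s\in(t_n,t_{n+1})$. Writing $g_n:=f_{n+1}-f_n$ and using the convolution form of the Caputo derivative,
\begin{equation*}
D_t^\alpha\tft(t)=\frac{1}{\Gamma_{1-\alpha}}\int_0^t(t-s)^{-\alpha}G(s)\,ds=(K\ast G)(t),\qquad K(r):=\frac{r^{-\alpha}}{\Gamma_{1-\alpha}},
\end{equation*}
so that on $(t_m,t_{m+1})$ one obtains an explicit expansion $D_t^\alpha\tft(t)=\sum_{n=0}^m w_n(t)g_n$ with nonnegative weights $w_n(t)$ got by integrating $K$ over each subinterval. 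For comparison, the discrete operator is itself a causal convolution of the \emph{same} increments, $(D^\alpha_\tau\ft)_k=\Gamma_\alpha\tau^{-\alpha}\sum_{j=0}^{k-1}\lambda_{k-j}g_j$, against the kernel $\Gamma_\alpha\tau^{-\alpha}\lambda$, which by \eqref{weak.conv.lambda} is precisely a discretization of $K$.

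The conceptual point is that these two expressions cannot be compared term by term: the increments $g_n$ are $Y$-valued and may cancel, so $\norm{(D^\alpha_\tau\ft)_k}_Y$ admits no lower bound by $\sum_j\lambda_{k-j}\norm{g_j}_Y$ and a triangle-inequality estimate is hopeless. Instead I would exhibit a linear operator $\mathcal R$, independent of the data, with $D_t^\alpha\tft=\mathcal R(D^\alpha_\tau\ft)$. To build $\mathcal R$, invert the discrete Caputo derivative through the discrete fundamental theorem of calculus \eqref{eq:discreteFToC} (\Cref{prop:discreteFToC}): setting $d_k:=(D^\alpha_\tau f)_k$, the increments are recovered as a causal discrete convolution
\begin{equation*}
g_n=\frac{\tau^\alpha}{\Gamma_\alpha}\Big(d_{n+1}-\sum_{\ell=0}^{n-1}\mu(\ell)\,d_{n-\ell}\Big),\qquad \mu(\ell):=(\ell+1)^{\alpha-1}-(\ell+2)^{\alpha-1}>0,
\end{equation*}
whose coefficients depend only on the lag. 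Feeding this back into $K\ast G$ shows that $\mathcal R=D_t^\alpha\circ(\text{linear interpolation})\circ J_\tau$, with $J_\tau$ the discrete fractional integral, is a genuine causal convolution $u\mapsto\int_0^t\kappa(t-s)u(s)\,ds$; the additive constant $f_0$ drops out (it has zero Caputo derivative), so no boundary terms survive and translation invariance is exact. Granting a uniform bound $\norm{\kappa}_{L^1(0,T)}\le C_\alpha$, Young's convolution inequality for Bochner-valued functions gives $\norm{\mathcal R u}_{L^p(0,T;Y)}\le C_\alpha\norm{u}_{L^p(0,T;Y)}$ simultaneously for all $1\le p\le\infty$, which is exactly the claim with $u=D^\alpha_\tau\ft$.

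The main obstacle is precisely the uniform $L^1$ bound on $\kappa$. One cannot split $\mathcal R=K\ast\beta$ and bound the two factors separately: the recovery kernel $\beta$ encodes $(\tft)'$, which blows up like $\tau^{\alpha-1}$, so Young applied to $K$ and $\beta$ is useless; the entire gain comes from the cancellation between the singularity of $K$ and the $\tau^{\alpha-1}$ growth of $\beta$. The tools I would use are the telescoping identity $\sum_{\ell\ge0}\mu(\ell)=1$, the monotonicity and the estimates $\lambda_k\le k^{-\alpha}$, $\sum_{j=1}^k\lambda_j\le k^{1-\alpha}$ of \eqref{est_sum_lambda}, and the integrability $\int_0^TK<\infty$ (valid since $\alpha<1$). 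A single-cell computation already exposes the mechanism: for $t\in(0,\tau)$ one finds $\kappa(r)=\tau^{\alpha-1}r^{-\alpha}/(\Gamma_\alpha\Gamma_{1-\alpha})$, whose $L^1(0,\tau)$ mass is the $\tau$-independent constant $1/(\Gamma_\alpha\Gamma_{2-\alpha})$. The remaining work is to show that $\kappa$ is dominated on all of $(0,T)$ by a fixed integrable profile—comparable to $K$ itself—independent of $\tau$, which furnishes $C_\alpha$ and completes the proof.
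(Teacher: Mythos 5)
Your reduction is sound and, up to packaging, it is the same as the paper's: differentiate the interpolant to get $D_t^\alpha\tft=K\ast(\tft)'$, invert the discrete Caputo operator through the discrete fundamental theorem of calculus \eqref{eq:discreteFToC} to express the increments $f_{n+1}-f_n$ as a causal discrete convolution of the values $(D^\alpha_\tau f)_k$, and recombine to write $D_t^\alpha\tft(t)=c_\alpha\sum_k\mu_k(t)(D^\alpha_\tau f)_k$ with coefficient functions independent of the data; this is exactly the paper's identity \eqref{DD}. Your target estimate is also the right one, and in fact sharper than what the paper explicitly proves: for $p=\infty$ and scalar $Y$, extremizing over $|d_k|\le 1$ shows the lemma is \emph{equivalent} to $\sup_t\sum_k|\mu_k(t)|\le C_\alpha$, i.e.\ to a uniform $L^1$ bound on your kernel $\kappa$, whereas the paper only establishes $\sup_{k,t}|\mu_k(t)|\le C$ and asserts the conclusion. (A minor caveat: $\mathcal R$ is a convolution only when restricted to functions that are piecewise constant on the $\tau$-grid, since it commutes with shifts by multiples of $\tau$ rather than arbitrary shifts; this is all you need, but "translation invariance is exact" overstates it.)

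The genuine gap is that the uniform $L^1$ bound on $\kappa$, which you yourself call the main obstacle, is never proved, and the route you propose for it cannot work. First, no fixed $\tau$-independent integrable profile can dominate $\kappa$: by your own single-cell computation, $\kappa(r)=\tau^{\alpha-1}r^{-\alpha}/(\Gamma_\alpha\Gamma_{1-\alpha})$ on $(0,\tau)$, so evaluating at $r=\tau/2$ forces any dominating profile $\Phi$ to satisfy $\Phi(r)\ge 2^{\alpha-1}/(\Gamma_\alpha\Gamma_{1-\alpha}\,r)$ for all small $r$, which is not integrable. The kernel contains an approximate-identity bump of height $\sim\tau^{-1}$ and width $\sim\tau$; only its \emph{mass} is uniformly bounded, never its pointwise size, so a comparison "with $K$ itself" is false and the estimate must be carried out at the level of the $L^1$ norm for each $\tau$. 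Second, and more seriously, in the far field $r\gg\tau$ the tools you list (the telescoping identity for the tail weights, the bounds \eqref{est_sum_lambda}, integrability of $K$) only yield estimates in which absolute values are taken termwise, and every such estimate is provably too weak: depending on how one groups the sum, one obtains $|\mu_k(t)|\lesssim m^{\alpha-1}$, $m^{-\alpha}$, or $m^{-1}$ with $m=(t-t_{k-1})/\tau$, and none of these is summable over $k$ uniformly in $\tau$ (the best of them still gives $\sup_t\sum_k|\mu_k(t)|\lesssim\log(T/\tau)$). The true decay $|\mu_k(t)|\lesssim m^{-1-\alpha}$, equivalently $|\kappa(r)|\lesssim\tau^{\alpha}r^{-1-\alpha}$, does hold, but only because of cancellation between the positive weight $1$ on $d_{n+1}$ and the negative tail weights in your inversion formula (equivalently, between the near-diagonal and far-field parts of the kernel); extracting it requires keeping signs to the very end, via a further summation by parts with quantitative rates on the resulting second differences (or a generating-function/singularity analysis). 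That cancellation estimate is the actual content of the lemma; Young's inequality afterwards is the easy step. As written, your proposal reduces the lemma to an unproven estimate and then points to a method for proving it that demonstrably fails.
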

\begin{lemma}\label{lem:tau jump}
Let $Y$ and $f^{(\tau)}$ be as in Lemma \ref{lem.2}. Then
\begin{equation}\label{eq:tau jump}
\norm{f^{(\tau)}(t+\tau)-f^{(\tau)}(t)}_{L^p(0,T-\tau ;Y)} \leq \frac{2^{1+1/p}   \tau^{\alpha}}{\Gamma_{\alpha}}   \norm{D^{\alpha}_\tau f^{(\tau)}}_{L^p(0,T;Y)},\quad\tau>0,\quad 1\leq p\leq\infty. 
\end{equation}
\end{lemma}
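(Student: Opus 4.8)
The plan is to reduce the claimed $L^p$ inequality to a purely algebraic estimate on the increments $f_{n+1}-f_n$, using the discrete fundamental theorem of calculus (Proposition \ref{prop:discreteFToC}) to rewrite these increments in terms of the discrete Caputo derivatives $(D^\alpha_\tau f)_k$.

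First I would unfold both sides into weighted $\ell^p$ sums. Since $f^{(\tau)}$ is piecewise constant with value $f_n$ on $(t_{n-1},t_n]$, for $t\in(t_{n-1},t_n]$ with $1\le n\le N-1$ one has $t+\tau\in(t_n,t_{n+1}]$, hence $f^{(\tau)}(t+\tau)-f^{(\tau)}(t)=f_{n+1}-f_n$. Therefore
$$\norm{f^{(\tau)}(\cdot+\tau)-f^{(\tau)}}_{L^p(0,T-\tau;Y)}^p = \tau\sum_{n=1}^{N-1}\norm{f_{n+1}-f_n}_Y^p,$$
while by construction $\norm{D^\alpha_\tau f^{(\tau)}}_{L^p(0,T;Y)}^p = \tau\sum_{k=1}^N\norm{(D^\alpha_\tau f)_k}_Y^p$. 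Thus the statement becomes an inequality between two weighted $\ell^p$ sums over the same grid.

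Next I would subtract the two instances of \eqref{eq:discreteFToC} written for $f_{n+1}$ and $f_n$, obtaining
$$f_{n+1}-f_n = \frac{\tau^\alpha}{\Gamma_\alpha}\left[(D^\alpha_\tau f)_{n+1} - \sum_{k=1}^n c_{n,k}(D^\alpha_\tau f)_k\right], \quad c_{n,k}:=(n+1-k)^{\alpha-1}-(n+2-k)^{\alpha-1}.$$
Because $\alpha-1<0$, the map $m\mapsto m^{\alpha-1}$ is decreasing, so $c_{n,k}\ge 0$; the substitution $m=n+1-k$ telescopes the \emph{row} sum to $\sum_{k=1}^n c_{n,k}=1-(n+1)^{\alpha-1}\le 1$, and the same substitution gives the \emph{column} sum $\sum_{n\ge k}c_{n,k}=1-(N-k+1)^{\alpha-1}\le1$. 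Writing $g_k:=\norm{(D^\alpha_\tau f)_k}_Y$, the triangle inequality then yields $\norm{f_{n+1}-f_n}_Y\le \frac{\tau^\alpha}{\Gamma_\alpha}\big(g_{n+1}+\sum_{k=1}^n c_{n,k}g_k\big)$, a nonnegative combination of the $g_k$ whose total weight is at most $2$.

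Finally I would raise to the $p$-th power and apply Jensen's inequality (equivalently discrete Hölder) to this combination of total weight $W_n\le 2$, giving $\norm{f_{n+1}-f_n}_Y^p\le \frac{\tau^{\alpha p}}{\Gamma_\alpha^p}2^{p-1}\big(g_{n+1}^p+\sum_{k=1}^n c_{n,k}g_k^p\big)$; then I sum over $n$, multiply by $\tau$, and interchange the order of summation, controlling the first term by $\sum_k g_k^p$ and the second by the column-sum bound, again $\le\sum_k g_k^p$. This produces $\tau\sum_n\norm{f_{n+1}-f_n}_Y^p\le \frac{\tau^{\alpha p}}{\Gamma_\alpha^p}2^p\,\norm{D^\alpha_\tau f^{(\tau)}}_{L^p}^p$, and taking $p$-th roots gives the bound with constant $2\le 2^{1+1/p}$ (the case $p=\infty$ reducing to the immediate pointwise estimate). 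The main obstacle is purely the bookkeeping of the double sum after Jensen: one must check that both the row sums (fixing $n$) and the column sums (fixing $k$, summing over $n$) of the telescoping coefficients $c_{n,k}$ remain bounded by $1$, which is exactly what makes the exchange of summation harmless and pins down the constant.
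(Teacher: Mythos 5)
Your proposal is correct and takes essentially the same route as the paper's proof: both subtract consecutive instances of the discrete fundamental theorem of calculus \eqref{eq:discreteFToC} to express $f_{n+1}-f_n$ through the $(D^\alpha_\tau f)_k$ with telescoping coefficients, verify that the row sums and the column sums of those coefficients are bounded by $1$, apply a convexity/H\"older step to the $p$-th power, and interchange the double summation. The only (cosmetic) difference is that you apply Jensen's inequality once to the full nonnegative combination of total weight at most $2$, whereas the paper first splits off the $(D^\alpha_\tau f)_{n+1}$ term via the triangle and Young's inequalities and then applies discrete H\"older to the remaining sum; your variant yields the marginally sharper constant $2/\Gamma_\alpha \leq 2^{1+1/p}/\Gamma_\alpha$, which still implies \eqref{eq:tau jump}.
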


\begin{proof}[Proof of Lemma \ref{lem.1}]
Since the mapping $D_t^\alpha f\mapsto f( \cdot + h)-f$ is linear, 
Riesz-Thorin interpolation Theorem \cite[Thr.~II.4.2]{werner2006funktionalanalysis} implies that it is enough to prove \eqref{dfDaf} for $p=1$ and $p=\infty$. 

We start by recalling the fundamental theorem of calculus
\begin{equation*}
f(t) = f_{0} + \frac{1}{\Gamma_\alpha}\int_0^t (t-s)^{\alpha-1}D_t^\alpha f(s)ds ,\quad 0\leq t\leq T .
\end{equation*}
Then
\begin{align*}
& f(t+h)-f(t) = \frac{1}{\Gamma_\alpha}\int_0^{t+h} (t+h-s)^{\alpha-1}D_t^\alpha f(s)ds - \frac{1}{\Gamma_\alpha}\int_0^{t} (t-s)^{\alpha-1}D_t^\alpha f(s)ds\\
&\quad = \frac{1}{\Gamma_\alpha}\int_0^{t} ((t+h-s)^{\alpha-1}-(t-s)^{\alpha-1}) D_t^\alpha f(s)ds
+\frac{1}{\Gamma_\alpha}\int_t^{t+h}(t+h-s)^{\alpha-1}D_t^\alpha f(s)ds\\
&\quad =: \mathcal I_1(t) + \mathcal I_2(t) .
\end{align*}
Let us estimate
\begin{align*}
\int_0^{T-h}\|\mathcal I_1(t)\|_Y dt & \leq 
\frac{1}{\Gamma_\alpha}\int_0^{T-h}\int_0^{t} |(t+h-s)^{\alpha-1}-(t-s)^{\alpha-1}| \|D_t^\alpha f(s)\|_Y  ds dt\\
&= \frac{1}{\Gamma_\alpha}\int_0^{T-h}\int_s^{T-h} ((t-s)^{\alpha-1} - (t+h-s)^{\alpha-1})\|D_t^\alpha f(s)\|_Y dt ds\\
&= \frac{1}{\Gamma_\alpha\alpha}\int_0^{T-h} 
((T-h-s)^{\alpha} - (T-s)^{\alpha} + h^\alpha )\|D_t^\alpha f(s)\|_Y ds .
\end{align*}
We deduce that
\begin{align*}
\int_0^{T-h}\|\mathcal I_1(t)\|_Y dt\leq \frac{h^{\alpha}}{\Gamma_\alpha\alpha}\|D_t^\alpha f\|_{L^1(0,T; Y)}.
\end{align*}
On the other hand
\begin{align*}
\int_0^{T-h}\|\mathcal I_2(t)\|_Y dt
&\leq
\frac{1}{\Gamma_\alpha}\int_0^{T-h}\int_t^{t+h}(t+h-s)^{\alpha-1}
\|D_t^\alpha f(s)\|_Y ds dt \\
&=\frac{1}{\Gamma_\alpha}\int_0^{T}\int_{s-h}^{s}(t+h-s)^{\alpha-1}
\|D_t^\alpha f(s)\|_Y dt ds\\
&=\frac{h^{\alpha}}{\Gamma_\alpha\alpha}
\int_0^{T} \|D_t^\alpha f(s)\|_Y ds = \frac{h^{\alpha}}{\Gamma_\alpha\alpha}\|D_t^\alpha f\|_{L^1(0,T; Y)}.
\end{align*}
This means that \eqref{dfDaf} holds for $p=1$. Let us now consider the case $p=\infty$:
\begin{align*}
\|\mathcal{I}_1(t)\|_{Y} &\leq \|D_t^\alpha f\|_{L^\infty(0,T; Y)}
\frac{1}{\Gamma_\alpha}\int_0^{t} ((t-s)^{\alpha-1}-(t+h-s)^{\alpha-1})ds\\
&= \|D_t^\alpha f\|_{L^\infty(0,T; Y)}
\frac{1}{\Gamma_\alpha\alpha}( t^\alpha + h^{\alpha} - (t+h)^{\alpha})\\
&\leq \frac{h^\alpha}{\Gamma_\alpha\alpha}\|D_t^\alpha f\|_{L^\infty(0,T; Y)} .
\end{align*}
On the other hand,
\begin{align*}
\|\mathcal{I}_2(t)\|_{Y} &\leq
\|D_t^\alpha f\|_{L^\infty(0,T; Y)}
\frac{1}{\Gamma_\alpha}\int_t^{t+h}(t+h-s)^{\alpha-1}ds
= \frac{h^\alpha}{\Gamma_\alpha\alpha}
\|D_t^\alpha f\|_{L^\infty(0,T; Y)}.
\end{align*}
Therefore \eqref{dfDaf} holds also for $p=\infty$.
This finishes the proof of the Lemma.
\end{proof}
\begin{proof}[Proof of Lemma \ref{lem.2}]	
Let $t\in (0,T)$ arbitrary, $k = \lfloor t/\tau\rfloor$.
Let us begin by computing
\begin{align*}
D_t^\alpha\tft(t) &= \frac{1}{\Gamma_{1-\alpha}}\int_0^t\frac{D\tft(s)}{(t-s)^\alpha}ds\\
&= \frac{1}{\Gamma_{1-\alpha}}\int_0^t (t-s)^{-\alpha}
\sum_{n=0}^{N-1}\frac{f_{n+1} -f_n }{ \tau }
\chf{[t_n,t_{n+1})}(s)ds\\
&= \frac{1}{\Gamma_{1-\alpha}}
\sum_{n=0}^{k-1}\frac{f_{n+1} -f_n }{ \tau }
\int_{t_n}^{t_{n+1}} (t-s)^{-\alpha}ds + 
\frac{1}{\Gamma_{1-\alpha}}
\frac{f_{k+1} -f_k }{ \tau }
\int_{t_k}^{t} (t-s)^{-\alpha}ds\\
&= \frac{1}{\Gamma_{1-\alpha}}
\sum_{n=0}^{k-1}\frac{f_{n+1} -f_n }{ (1-\alpha)\tau }
( (t-t_n)^{1-\alpha} - (t-t_{n+1})^{1-\alpha} ) + 
\frac{1}{\Gamma_{1-\alpha}}
\frac{f_{k+1} -f_k}{(1-\alpha)\tau}
(t-t_k)^{1-\alpha},
\end{align*}
and so
\begin{align}
\label{Dtft}
D_t^\alpha\tft(t) &= \frac{\tau^{-\alpha}}{\Gamma_{1-\alpha}}
\sum_{n=0}^{N-1}\frac{f_{n+1} -f_n }{ (1-\alpha) }
( g_{n}(t) - g_{n+1}(t) ) ,\quad
0\leq t\leq T ,\\
g_{n}(t) &:= \tau^{\alpha-1}(t-t_n)_+^{1-\alpha},\quad n\geq 0.\nonumber
\end{align}
From \eqref{eq:discreteFToC} one obtains
\begin{align*}
f_{n+1} - f_n &= \frac{\tau^\alpha}{\Gamma_\alpha}\sum_{k=1}^{n+1}
w_{n+1-k}(D^\alpha_\tau f)_k,\quad n\geq 0,\\
w_s &:= \begin{cases}
1 & s=0\\
(s+1)^{\alpha-1} - s^{\alpha-1} & s\geq 1.
\end{cases} 
\end{align*}
Plugging the above identity into \eqref{Dtft} yields
\begin{align*}
D_t^\alpha\tft(t) &= \frac{1}{\Gamma_{1-\alpha}\Gamma_\alpha(1-\alpha)}
\sum_{n=0}^{N-1}( g_{n}(t) - g_{n+1}(t) ) 
\sum_{k=1}^{n+1} w_{n+1-k}(D^\alpha_\tau f)_k\\
&= \frac{1}{\Gamma_{1-\alpha}\Gamma_\alpha(1-\alpha)}
\sum_{k=1}^{N} \sum_{n=k-1}^{N-1}( g_{n}(t) - g_{n+1}(t) ) 
w_{n+1-k}(D^\alpha_\tau f)_k,
\end{align*}
which can be rewritten as 
\begin{align}
\label{DD}
D_t^\alpha\tft(t) &=
\frac{1}{\Gamma_{1-\alpha}\Gamma_\alpha(1-\alpha)}
\sum_{k=1}^{N}\mu_k(t)(D^\alpha_\tau f)_k ,\\
\nonumber
\mu_k(t) &:= \sum_{s=0}^{N-k}w_s (g_{s+k-1}(t) - g_{s+k}(t)) ,\quad 1\leq k\leq N.
\end{align}
We aim to show that a constant $C>0$ exists, depending only on $\alpha\in (0,1)$ and $T>0$, such that $\|\mu_k\|_{L^\infty(0,T)}\leq C$ for $1\leq k\leq N$, $\tau>0$. From this fact the statement of the Lemma follows easily.

Let $\rho \equiv t/\tau \in [0,N]$.
From the definitions of $w_s$ and $g_n(t)$ it follows
\begin{align*}
\mu_k(t) =& (\rho -k+1)_+^{1-\alpha} - (\rho -k)_+^{1-\alpha}\\
&+\sum_{s=1}^{N-k}(s+1)^{\alpha-1}(
(\rho -s-k+1)_+^{1-\alpha} - (\rho -s-k)_+^{1-\alpha})\\
&-\sum_{s=1}^{N-k} s^{\alpha-1}(
(\rho -s-k+1)_+^{1-\alpha} - (\rho -s-k)_+^{1-\alpha})\\
=& (N-k+1)^{\alpha-1}( (\rho-N+1)_+^{1-\alpha} - (\rho-N)_+^{1-\alpha})\\
&+\sum_{s=1}^{N-1}s^{\alpha-1}(
(\rho-s-k+2)_+^{1-\alpha} - 2(\rho-s-k+1)_+^{1-\alpha}
+(\rho-s-k)_+^{1-\alpha} ) .
\end{align*}
Since $0\leq\rho\leq N$ and $1\leq k\leq N$, it holds
\begin{align*}
&(N-k+1)^{\alpha-1} ( (\rho-N+1)_+^{1-\alpha} - (\rho-N)_+^{1-\alpha})\\ 
&\quad = (N-k+1)^{\alpha-1}(\rho-N+1)_+^{1-\alpha}\leq 1.
\end{align*}
It follows
\begin{equation}
\label{est.mu}
|\mu_k(t)|\leq 1 + \sum_{s=1}^{N-1}s^{\alpha-1}\left|
(\rho-s-k+2)_+^{1-\alpha} - 2(\rho-s-k+1)_+^{1-\alpha}
+(\rho-s-k)_+^{1-\alpha}\right| .
\end{equation}
We can clearly consider $\rho>s+k-2$, since for $\rho\leq s+k-2$ the sum on the right-hand side of \eqref{est.mu} vanishes. Let us distinguish two cases.\medskip\\
{\em Case 1: $s+k-2<\rho\leq s+k+1$.} Being $s$, $k$ integers, this means that
$\lfloor\rho\rfloor - k\leq s \leq \lfloor\rho\rfloor - k + 2$.
It follows
\begin{align*}
&\sum_{s=\max\{1, \lfloor\rho\rfloor - k\}}^{\lfloor\rho\rfloor - k + 2} s^{\alpha-1}|
(\rho-s-k+2)_+^{1-\alpha} - 2(\rho-s-k+1)_+^{1-\alpha}
+(\rho-s-k)_+^{1-\alpha}|\\
&\leq \sum_{s=\max\{1, \lfloor\rho\rfloor - k\}}^{\lfloor\rho\rfloor - k + 2} s^{\alpha-1}
\big(
|(\rho-s-k+2)_+^{1-\alpha} - (\rho-s-k+1)_+^{1-\alpha}|\\ 
&\qquad \qquad \qquad \qquad 
+ |(\rho-s-k+1)_+^{1-\alpha} -(\rho-s-k)_+^{1-\alpha}|\big).
\end{align*}
Since $x\mapsto x^{1-\alpha}$ is subadditive, it holds
$x^{1-\alpha}\leq (x-y)^{1-\alpha} + y^{1-\alpha} $ for $x\geq y\geq 0$, so
\begin{align*}
|(\rho-s-k+2)_+^{1-\alpha} - (\rho-s-k+1)_+^{1-\alpha}|\leq
( (\rho-s-k+2)_+ - (\rho-s-k+1)_+ )^{1-\alpha} \leq 1,\\
|(\rho-s-k+1)_+^{1-\alpha} - (\rho-s-k)_+^{1-\alpha}|\leq
( (\rho-s-k+1)_+ - (\rho-s-k)_+ )^{1-\alpha} \leq 1.
\end{align*}
It follows
\begin{align*}
&\sum_{s=1}^{N-1} s^{\alpha-1}|
(\rho-s-k+2)_+^{1-\alpha} - 2(\rho-s-k+1)_+^{1-\alpha}
+(\rho-s-k)_+^{1-\alpha}|\\
&\leq 2\sum_{s=\max\{1, \lfloor\rho\rfloor - k\}}^{\lfloor\rho\rfloor - k + 2} s^{\alpha-1} \leq 6.
\end{align*}
{\em Case 2: $\rho > s+k+1$.} This implies $s\leq \lfloor\rho\rfloor - k - 1$.
Lagrange's theorem yields
\begin{align*}
&(\rho-s-k+2)_+^{1-\alpha} - 2(\rho-s-k+1)_+^{1-\alpha}
+(\rho-s-k)_+^{1-\alpha} \\ 
&\quad = (\rho-s-k+2)^{1-\alpha} - 2(\rho-s-k+1)^{1-\alpha}
+(\rho-s-k)^{1-\alpha} \\
&\quad = -\alpha(1-\alpha)\xi^{-1-\alpha},\qquad
\rho-s-k\leq\xi\leq\rho-s-k+2 ,
\end{align*}
and so
\begin{align*}
\sum_{s=1}^{\lfloor\rho\rfloor - k - 1} s^{\alpha-1}
& |(\rho-s-k+2)_+^{1-\alpha} - 2(\rho-s-k+1)_+^{1-\alpha}
+(\rho-s-k)_+^{1-\alpha}| \\
&\leq \alpha(1-\alpha)\sum_{s=1}^{\lfloor\rho\rfloor - k - 1} s^{\alpha-1}(\rho-s-k)^{-1-\alpha} \\
&\leq \alpha(1-\alpha)\sum_{s=1}^{\lfloor\rho\rfloor - k - 1} 
(\lfloor\rho\rfloor - s - k)^{-1-\alpha} \\
&\leq \alpha(1-\alpha)\sum_{r=1}^\infty r^{-1-\alpha} < \infty .
\end{align*}
From \eqref{est.mu} the statement follows. This finishes the proof of the Lemma.
\end{proof}
\begin{proof}[Proof of Lemma \ref{lem:tau jump}]
From \eqref{eq:discreteFToC} we have
\begin{flalign*}
f_n = f_0 + \frac{\tau^\alpha}{\Gamma_\alpha}\sum_{k=1}^n (n-k+1)^{\alpha-1}(D^\alpha_\tau f)_k,\quad
n\geq 1.
\end{flalign*}
For convenience, denote $a_j := j^{\alpha - 1}$, then
$$f_{n+1}-f_n=\frac{\tau ^{\alpha}}{\Gamma_{\alpha}}\sum\limits_{k=1}^n(a_{n+2-k}-a_{n+1-k})  (D^{\alpha}_\tau f)_k + \frac{\tau ^{\alpha}}{\Gamma_{\alpha}} a_1  (D^{\alpha}_\tau f)_{n+1}.$$
Hence
\begin{equation*}\label{eq:I&II}
\renewcommand\arraystretch{1.5}
\begin{array}{ll}
\norm{f_{n+1}-f_n}_Y^p
&=\norm{\frac{\tau ^{\alpha}}{\Gamma_{\alpha}}\sum\limits_{k=1}^n(a_{n+2-k}-a_{n+1-k})  (D^{\alpha}_\tau f)_k + \frac{\tau ^{\alpha}}{\Gamma_{\alpha}} a_1  (D^{\alpha}_\tau f)_{n+1}}_Y^p\\
&\leq (2/\Gamma_{\alpha})^p  \Big[ \Big(  \tau ^{\alpha}\sum\limits_{k=1}^n(a_{n+1-k}-a_{n+2-k})  \norm{(D^{\alpha}_\tau f)_k}_Y\Big) ^p+\Big(  \tau ^{\alpha} a_1  \norm{(D^{\alpha}_\tau f)_{n+1}}_Y \Big) ^p \Big],
\end{array}
\end{equation*}
thanks to the triangular and Young's inequality.
Then we have the following bounds:
\begin{equation*}
\begin{array}{ll}
&\Big(  \tau ^{\alpha}\sum\limits_{k=1}^n(a_{n+1-k}-a_{n+2-k})  \norm{(D^{\alpha}_\tau f)_k}_Y\Big) ^p\\
=&\tau ^{(\alpha -1)p}   \left(  \sum\limits_{k=1}^n \tau (a_{n+1-k}-a_{n+2-k})  \norm{(D^{\alpha}_\tau f)_k}_Y\right) ^p\\
\leq &\tau ^{(\alpha -1)p}   \left( \sum\limits_{k=1}^n\tau (a_{n+1-k}-a_{n+2-k})\right) ^{p-1}   \left( \sum\limits_{k=1}^n\tau(a_{n+1-k}-a_{n+2-k})  \norm{(D^{\alpha}_\tau f)_k}_Y^p \right)\\
=&\tau ^{(\alpha -1)p}   \left( \tau (a_1-a_{n+1})\right) ^{p-1}   \left( \sum\limits_{k=1}^n\tau(a_{n+1-k}-a_{n+2-k})  \norm{(D^{\alpha}_\tau f)_k}_Y^p \right)\\
\leq &\tau ^{\alpha p}   \sum\limits_{k=1}^n (a_{n+1-k}-a_{n+2-k})  \norm{(D^{\alpha}_\tau f)_k}_Y^p,
\end{array}
\end{equation*}
where in the first inequality we have used a discrete version of H\"older's inequality:
\begin{equation*}
\begin{array}{ll}
\norm{g   h}_{L^1}
&\leq\norm{g}_{L^1}^{\frac{p-1}{p}}   \norm{g  h^p}_{L^1}^{\frac{1}{p}}.
\end{array}
\end{equation*}
Summarizing we get
\begin{equation*}\label{eq:I&II2}
\norm{f_{n+1}-f_n}_Y^p = (2/\Gamma_{\alpha})^p  \tau ^{\alpha p}   \left[ \sum\limits_{k=1}^n (a_{n+1-k}-a_{n+2-k})  \norm{(D^{\alpha}_\tau f)_k}_Y^p + a_1^p  \norm{(D^{\alpha}_\tau f)_{n+1}}_Y^p \right],
\end{equation*}
which yields
\begin{align*}
&\norm{f^{(\tau)}(t+\tau)-f^{(\tau)}(t)}_{L^p(0,T-\tau ;Y)}^p
= \sum\limits_{n=1}^{N-1}\tau \norm{f_{n+1}-f_n}_Y^p\\
\leq &(2/\Gamma_{\alpha})^p  \tau ^{\alpha p}\sum\limits_{n=1}^{N-1} \tau   \left( \sum\limits_{k=1}^n(a_{n+1-k}-a_{n+2-k})  \norm{(D^{\alpha}_\tau f)_k}_Y^p + a_1^p  \norm{(D^{\alpha}_\tau  f)_{n+1}}_Y^p\right)\\
\leq &(2/\Gamma_{\alpha})^p   \tau ^{\alpha r+1}   \left( \sum\limits_{n=1}^{N-1}\sum\limits_{k=1}^n (a_{n+1-k}-a_{n+2-k})  \norm{(D^{\alpha}_\tau f)_k}_Y^p + \sum\limits_{n=1}^{N-1} \norm{(D^{\alpha}_\tau  f)_{n+1}}_Y^p\right).
\end{align*}
Then we interchange summations and get
\begin{align*}
&\norm{f^{(\tau)}(t+\tau)-f^{(\tau)}(t)}_{L^p(0,T-\tau ;Y)}^p\\
=&(2/\Gamma_{\alpha})^p   \tau ^{\alpha r+1}   \left( \sum\limits_{k=1}^{N-1}\sum\limits_{n=k}^{N-1} (a_{n+1-k}-a_{n+2-k})  \norm{(D^{\alpha}_\tau f)_k}_Y^p + \sum\limits_{n=1}^{N-1} \norm{(D^{\alpha}_\tau  f)_{n+1}}_Y^p\right)\\
=&(2/\Gamma_{\alpha})^p   \tau ^{\alpha r+1}   \left( \sum\limits_{k=1}^{N-1}\norm{(D^{\alpha}_\tau f)_k}_Y^p   (a_1-a_{N+1-k})+ \sum\limits_{n=1}^{N-1} \norm{(D^{\alpha}_\tau  f)_{n+1}}_Y^p\right).
\end{align*}
Since $a_1-a_{N+1-k}=1-(N+1-k)^{\alpha -1}\in (0, 1]$, one obtains
\begin{align*}
&\norm{f^{(\tau)}(t+\tau)-f^{(\tau)}(t)}_{L^p(0,T-\tau ;Y)}^p\\
\leq &(2/\Gamma_{\alpha})^p   \tau ^{\alpha p}   \left( \sum\limits_{k=1}^{N-1} \tau   \norm{(D^{\alpha}_\tau f)_k}_Y^p+ \sum\limits_{n=1}^{N-1} \tau   \norm{(D^{\alpha}_\tau  f)_{n+1}}_Y^p\right)\\
= &(2/\Gamma_{\alpha})^p   \tau ^{\alpha p}   \left( \norm{D^{\alpha}_\tau f^{(\tau)}}_{L^p(0,T-\tau ;Y)}^p + \norm{D^{\alpha}_\tau f^{(\tau)}}_{L^p(\tau ,T;Y)}^p\right)\\
\leq &\frac{2^{p+1}    \tau ^{\alpha p}}{\Gamma_{\alpha}^p}   \norm{D^{\alpha}_\tau f^{(\tau)}}_{L^p(0,T;Y)}^p.
\end{align*} 
\end{proof}

We are now ready to prove \Cref{thm:mainII}.
\begin{proof}[Proof of \Cref{thm:mainII}]
We denote with $\tilde{f}^{(\tau)}$ the linear interpolant \eqref{eq:lin.intplt} and with $f^{(\tau)}$ the piecewise constant function $f^{(\tau)} = f_N\chf{\{T\}} + 
\sum_{n=0}^{N-1} f_n \chf{[t_n,t_{n+1})}$. From \eqref{eq:lin.intplt}, we have
$$
\norm{\tilde{f}^{(\tau)}(t)}_X \leq \norm{f^{(\tau)}(t)}_X + \norm{f^{(\tau)}(t+h)}_X.
$$
Hence
\begin{equation}\label{eq:u_bdd}
\norm{\tilde{f}^{(\tau)}}_{L^p(0,T;X)} \leq 2\norm{f^{(\tau)}}_{L^p(0,T;X)} \leq C,
\end{equation}
by the assumption of the theorem.
One can apply \Cref{lem.1} with $f=\tft$ and use \Cref{lem.2} to deduce
$$
\|\tft( \cdot + h)-\tft\|_{L^p(0,T-h; Y)}\leq C_\alpha h^\alpha \|D^\alpha_\tau\ft\|_{L^p(0,T; Y)} .
$$
As a consequence, if $\|D^\alpha_\tau\ft\|_{L^p(0,T; Y)}\leq C$, then	
\begin{equation}\label{eq:Su_bdd}
\lim_{h\to 0}\sup_{\tau>0}\|\tft( \cdot + h)-\tft\|_{L^p(0,T-h; Y)} = 0.
\end{equation}
By \Cref{lem:simon}, estimates \eqref{eq:u_bdd} and \eqref{eq:Su_bdd} imply that $\tilde{f}^{(\tau)}$ is relatively--compact in $L^p(0,T;B)$ and therefore there exists a subsequence of $\tilde{f}^{(\tau)}$ (still denoted with $\tilde{f}^{(\tau)}$) such that 
\begin{equation}\label{eq:theorem2star1}
\tilde{f}^{(\tau)}\rightarrow f^*\ \mbox{strongly in}\ L^p(0,T;B).
\end{equation}

Next we show that convergence of $\tilde{f}^{(\tau)}$ implies convergence of the corresponding piecewise constant function $f^{(\tau)}$.
Since
\begin{equation*}
\begin{split}
\norm{(f^{(\tau)}-\tilde{f}^{(\tau)})(t)}_Y &= 
\sum_{n=0}^{N-1}\norm{\frac{\tau - (t-t_n)}{\tau}(f_{n+1}-f_n)}_Y   \chf{[t_n,t_{n+1})}(t) \\
&\leq \sum_{n=0}^{N-1}\norm{f_{n+1}-f_n}_Y   \chf{[t_n,t_{n+1})}(t),
\end{split}
\end{equation*}
from \Cref{lem:tau jump} we have
\begin{equation*}
\begin{split}
\norm{f^{(\tau)}-\tilde{f}^{(\tau)}}_{L^p(0,T;Y)} &\leq
\norm{f^{(\tau)}(t+\tau)-f^{(\tau)}(t)}_{L^p(0,T-\tau ;Y)} \leq C(\alpha , p)   \tau^{\alpha}   \norm{D^{\alpha}_{\tau} f^{(\tau)}}_{L^p(0,T;Y)},
\end{split}
\end{equation*}
which implies
\begin{equation}\label{eq:theorem2star2}
\norm{f^{(\tau)}-\tilde{f}^{(\tau)}}_{L^p(0,T;Y)}\rightarrow 0,\ as\ \tau\rightarrow 0,
\end{equation}
by assumption of the theorem.
From interpolation we know that there exists $\theta \in (0,1)$ and $C_{\theta}>0$ such that
$$
\norm{f^{(\tau)}-\tilde{f}^{(\tau)}}_{L^p(0,T;B)}\leq C_{\theta}  \norm{f^{(\tau)}-\tilde{f}^{(\tau)}}_{L^p(0,T;X)}^{\theta}  \norm{f^{(\tau)}-\tilde{f}^{(\tau)}}_{L^p(0,T;Y)}^{1-\theta}.
$$
Summarizing we have
\begin{align*}
\norm{f^{(\tau)}-f^*}_{L^p(0,T;B)}
&\leq \norm{f^{(\tau)}-\tilde{f}^{(\tau)}}_{L^p(0,T;B)} + \norm{\tilde{f}^{(\tau)}-f^*}_{L^p(0,T;B)}\\
&\leq C_{\theta}  \norm{f^{(\tau)}-\tilde{f}^{(\tau)}}_{L^p(0,T;X)}^{\theta}  \norm{f^{(\tau)}-\tilde{f}^{(\tau)}}_{L^p(0,T;Y)}^{1-\theta}
+ \norm{\tilde{f}^{(\tau)}-f^*}_{L^p(0,T;B)}\\
&\leq C_{\theta}  \norm{3f^{(\tau)}}_{L^p(0,T;X)}^{\theta}  \norm{f^{(\tau)}-\tilde{f}^{(\tau)}}_{L^p(0,T;Y)}^{1-\theta} + \norm{\tilde{f}^{(\tau)}-f^*}_{L^p(0,T;B)}.
\end{align*}
Therefore $\norm{f^{(\tau)}-f^*}_{L^p(0,T;B)}\rightarrow 0$ as $\tau\rightarrow 0$ thanks to \eqref{eq:theorem2star1} and \eqref{eq:theorem2star2}, namely $f^{(\tau)}$ is relatively compact in $L^p(0,T;B)$. \Cref{thm:mainII} is proved.
\end{proof}

\section{Porous Medium Equation with Caputo time derivative}\label{sec:eqn}
The rest of the manuscript is devoted to the proof of \Cref{thm:mainthm}. We consider the system 
\begin{equation}
\left\{
\begin{array}{l}
D^{\alpha}_t u=\textrm{div}(u \nabla p),\ x\in\mathbb{T}^3,\ t\in (0,T),\\
D^{\alpha}_tp=-(-\Delta)^{s}p+u^{2},
\end{array}
\right.\label{eq1111}
\end{equation}
with initial data $u_{in}$ and $p_{in}$: $\mathbb{T}^3\rightarrow (0,+\infty)$ such that $\int_{\mathbb{T}^3}u_{in}^2+|\nabla p_{in}|^2\;dx<+\infty$. Moreover $ 0 < s \leq 1$ and $0 < \alpha \leq 1$.
The energy functional
$$
H[u,p]:=\int_{\mathbb{T}^{3}}u^{2}+\frac{1}{2}|\nabla p|^{2}\;dx
$$
formally satisfies the following inequality:\footnote{In the case $s=1$ we replace in \eqref{eq_energy_ine} $(-\Delta)^{1/2} \nabla $ with $\Delta$.}
\begin{align}
H[u(t),p(t)]+\frac{1}{\Gamma_{\alpha}}\int_{0}^{t}\int_{\mathbb{T}^{3}}\frac{|(-\Delta)^{\frac{s}{2}}\nabla p(s)|^{2}}{(t-s)^{1-\alpha}}\;dxds\leq H(u_{in},p_{in}),\textrm{ for }t\in[0,T].\label{eq_energy_ine}
\end{align}
To see this, we first test the second equation in \eqref{eq1111} by $\Delta p$, 
and from divergence theorem we get 
\begin{align*}
\int_{\mathbb{\mathbb{T}}^{3}}\nabla u^2\cdot \nabla p\;dx 
 =\int_{\mathbb{\mathbb{T}}^{3}}(D_{t}^{\alpha}\nabla p) \cdot\nabla p\;dx+\int_{\mathbb{\mathbb{T}}^{3}}\lvert(-\Delta)^{\frac{s}{2}}\nabla p\lvert^{2}\;dx.
\end{align*}
Then we test the first equation in \eqref{eq1111} by $2u$ and get 
\begin{align*}
\int_{\mathbb{\mathbb{T}}^{3}}2u  D_{t}^{\alpha}u\;dx & =\int_{\mathbb{\mathbb{T}}^{3}}\textrm{div}(u\nabla p) 2u\;dx
  =-\int_{\mathbb{\mathbb{T}}^{3}}\nabla u^{2} \cdot\nabla p\;dx.
\end{align*}
The combination of the two yields the following equation:
\begin{equation*}
\int_{\mathbb{T}^{3}}2u  D_{t}^{\alpha}u\;dx+\int_{\mathbb{T}^{3}}(D_{t}^{\alpha}\nabla p) \cdot\nabla p\;dx+\int_{\mathbb{T}^{3}}|(-\Delta)^{\frac{s}{2}}\nabla p|^{2}\;dx=0.\label{eq3}
\end{equation*}
Thanks to the fact that $2g  D^\alpha_t g \geq D^\alpha_t g^2$ (see \cite{li2018some}), we get 
\begin{align*}
D_{t}^{\alpha}\left(\int_{\mathbb{T}^{3}}u^{2}+\frac{1}{2}|\nabla p|^{2}\;dx\right)+\int_{\mathbb{T}^{3}}|(-\Delta)^{\frac{s}{2}}\nabla p|^{2}\;dx
\leq 0.
\end{align*}
The fundamental theorem of calculus \eqref{thm:funthm_calc_cont} yields \eqref{eq_energy_ine}.


Next we divide interval $[0,T]$ into $N$ subintervals with length $\tau$ and discretize \eqref{eq1111} in time. We also add extra viscosity terms as follows.
For given constants $\varrho,\tau,\varepsilon>0,$ functions $u_{j}\in H^1(\mathbb{T}^{3})$
and $p_{j}\in H^{2s}(\text{\ensuremath{\mathbb{T}}}^{3})$ such that
$u_{j},p_{j}\geq0,\ j=0,...,k-1,$ a.e. in $\text{\ensuremath{\mathbb{T}}}^{3}$,
consider the weak formulation: 
\begin{equation}
\int_{\mathbb{T}^3}(D^{\alpha}_{\tau}u)_k  \phi \;dx+\int_{\text{\ensuremath{\text{\ensuremath{\mathbb{T}}}^{3}}}}u_k\nabla p_k \cdot\nabla\phi \;dx
+\varrho\int_{\text{\ensuremath{\text{\ensuremath{\mathbb{T}}}^{3}}}}\nabla u_k \cdot\nabla\phi \;dx  =0,\ \forall\phi\in H^1(\mathbb{T}^{3}),\label{eq5}
\end{equation}
\begin{equation}
\int_{\mathbb{T}^3}(D^{\alpha}_{\tau}p)_k  \psi \;dx+\int_{\text{\ensuremath{\mathbb{T}}}^{3}}(-\Delta)^{s/2}p_k (-\Delta)^{s/2}\psi \;dx
+\varepsilon\int_{\text{\ensuremath{\mathbb{T}}}^{3}}\nabla p_k \cdot\nabla\psi \;dx-\int_{\text{\ensuremath{\mathbb{T}}}^{3}}u_k^{2}\psi \;dx =0,\ \forall\psi\in H^{1}(\text{\ensuremath{\mathbb{T}}}^{3}).\label{eq6}
\end{equation}

\subsection{Existence of solutions for \eqref{eq5}-\eqref{eq6}}
We first consider the linearized system
\begin{equation}
\int_{\mathbb{T}^3}(D^{\alpha}_{\tau}u)_k  \phi \;dx+\sigma\int_{\text{\ensuremath{\text{\ensuremath{\mathbb{T}}}^{3}}}}z^{+}\nabla p_k \cdot\nabla\phi \;dx\\
+\varrho\int_{\text{\ensuremath{\text{\ensuremath{\mathbb{T}}}^{3}}}}\nabla u_k \cdot\nabla\phi \;dx =0,\ \forall\phi\in H^1(\mathbb{T}^{3}),\label{eq7}
\end{equation}
\begin{equation}
\int_{\mathbb{T}^3}(D^{\alpha}_{\tau}p)_k  \psi \;dx+\int_{\text{\ensuremath{\mathbb{T}}}^{3}}(-\Delta)^{s/2}p_k (-\Delta)^{s/2}\psi \;dx
+\varepsilon\int_{\text{\ensuremath{\mathbb{T}}}^{3}}\nabla p_k \cdot\nabla\psi \;dx-\int_{\text{\ensuremath{\mathbb{T}}}^{3}}z^{2}\psi \;dx =0,\ \forall\psi\in H^{1}(\text{\ensuremath{\mathbb{T}}}^{3}),\label{eq8}
\end{equation}
with $z\in H^1(\mathbb{T}^3)$ and $\sigma\in[0,1]$.
Define the bilinear forms
\[
B_{1}[u_k,\phi]:=\Gamma_{\alpha}\tau^{-\alpha}\int_{\text{\ensuremath{\mathbb{T}}}^{3}}u_k\phi \;dx+\varrho\int_{\text{\ensuremath{\text{\ensuremath{\mathbb{T}}}^{3}}}}\nabla u_k \cdot\nabla\phi \  \;dx,
\]
\[
B_{2}[p_k,\psi]:=\Gamma_{\alpha}\tau^{-\alpha}\int_{\text{\ensuremath{\mathbb{T}}}^{3}}p_k\psi \;dx+\int_{\text{\ensuremath{\mathbb{T}}}^{3}}(-\Delta)^{s/2}p_k (-\Delta)^{s/2}\psi \;dx+\varepsilon\int_{\text{\ensuremath{\mathbb{T}}}^{3}}\nabla p_k \cdot\nabla\psi \;dx,
\]
and the linear functionals 
\[
\langle F_{1},\phi\rangle_{H^1(\mathbb{T}^{3})^{'},H^1(\mathbb{T}^{3})}:=\Gamma_{\alpha}\tau^{-\alpha}\int_{\text{\ensuremath{\mathbb{T}}}^{3}}\left[\sum\limits_{j=0}^{k-2}\lambda_{k-j}({u_{j+1}-u_{j}})-u_{k-1}\right]\phi \;dx+\sigma\int_{\text{\ensuremath{\text{\ensuremath{\mathbb{T}}}^{3}}}}z^{+}\nabla p_k \cdot\nabla\phi \;dx,
\]
\[
\langle F_{2},\psi\rangle_{(H^{1})^{'},H^{1}}:=\Gamma_{\alpha}\tau^{-\alpha}\int_{\text{\ensuremath{\mathbb{T}}}^{3}}\left[\sum\limits_{j=0}^{k-2}\lambda_{k-j}({p_{j+1}-p_{j}})-p_{k-1}\right]\psi \;dx-\int_{\text{\ensuremath{\mathbb{T}}}^{3}}z^{2}\psi \;dx.
\]
Note that $B_{2}$ is bounded
\begin{align*}
B_{2}[p_k,\psi]
& \leq C_{4}\left\Vert p_k\right\Vert _{H^{1}}\left\Vert \psi\right\Vert _{H^{1}}+C_{2}\left\Vert p_k\right\Vert _{\dot{H^{s}}}\left\Vert \psi\right\Vert _{\dot{H^{s}}}
 \leq C\left\Vert p_k\right\Vert _{H^{1}}\left\Vert \psi\right\Vert _{H^{1},}
\end{align*}
and coercive 
\begin{align*}
B_{2}[p_k,p_k]
\geq C\left\Vert p_k\right\Vert _{H^{1}}^{2}.
\end{align*}
Moreover,
\begin{align*}
\langle F_{2},\psi\rangle_{(H^{1})^{'},H^{1}}
& \leq C_{1}\sum\limits_{j=0}^{k-2}\lambda_{k-j}\left[\norm{p_{j+1}} _{L^{2}}+\norm{p_{j}} _{L^{2}}\right]\left\Vert \psi\right\Vert _{L^{2}}+C_{2}\left\Vert z^{2}\right\Vert _{L^{2}}\left\Vert \psi\right\Vert _{L^{2}}\\
&\leq C\left\Vert \psi\right\Vert _{H^{1}},
\end{align*}
since $p_{j}\in L^{2},j=0,...,k-1,$ and $z\in  L^{6-\delta},$ for $\delta > 0$. The constant $C$ depends on $\tau$, $\left\Vert z^{}\right\Vert _{L^{4}}$, and $\left\Vert p_j\right\Vert _{L^{2}}$ for any $0\leq j\leq k-1$. 
By Lax-Milgram theorem there exists
a unique solution $p_{k}\in H^{1}(\mathbb{T}^3)$ to \eqref{eq8}. In addition, the elliptic regularity theory implies that $p_k\in H^{2}(\mathbb{T}^3).$
Similarly, $B_1$ is bounded
\begin{align*}
B_{1}[u_k,\phi]
\leq C_{4}\left\Vert u_k\right\Vert _{H^{1}}\left\Vert \phi\right\Vert _{H^{1}},
\end{align*}
and coercive
\begin{align*}
B_{1}[u_k,u_k]
\geq C_{2}\left\Vert u_k\right\Vert _{H^{1}}^{2},
\end{align*}
and
\begin{align*}
\langle F_{1},\phi\rangle_{H^1(\mathbb{T}^{3})^{'},H^1(\mathbb{T}^{3})}
& \leq C_{1} \sum\limits_{j=0}^{k-2}\lambda_{k-j}\left[\norm{u_{j+1}}_{L^{2}}+\norm{u_{j+1}}_{L^{2}}\right]\left\Vert \phi\right\Vert _{L^{2}}+C_{2}\left\Vert z^{+}\nabla p_k\right\Vert _{L^{2}}\left\Vert \nabla\phi\right\Vert _{L^{2}}\\
&\leq C\left\Vert \phi\right\Vert _{H^1(\mathbb{T}^{3})},
\end{align*}
since $u_{j}\in L^{2},\ j=0,...,k-1,$ $z^{+}\in L^{4},$  $\nabla p_k\in L^{q},\ q\geq4.$ The constant $C$ depends on $\tau$, $\left\Vert z^{}\right\Vert _{L^{4}}$, $\left\Vert u_j\right\Vert _{L^{2}}$, and $\left\Vert \nabla p_k\right\Vert _{L^{4}}$ for any $0\leq j\leq k-1$.
Once more, Lax Milgram theorem yields existence and uniqueness of the solution $u_k \in H^1(\mathbb{T}^{3})$ to 
\eqref{eq7}.

Next we use a fixed point argument to show the existence of solutions to \eqref{eq5} and \eqref{eq6}. Thanks to the existence and uniqueness of solution to \eqref{eq7} and \eqref{eq8}, we can define the map
\[
\mathcal{T}:(z,\sigma)\in L^{6-\delta}(\mathbb{T}^{3})\times[0,1]\rightarrow u\in L^{6-\delta}(\mathbb{T}^{3}).
\]

\begin{lemma}\label{lem:nonNegative}
Given any $\sigma\in [0,1]$,
any fixed point of $\mathcal{T}(\cdot,\sigma)$ is non-negative.
\end{lemma}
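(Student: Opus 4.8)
The plan is to exploit the structure of the fixed point: the transport coefficient in \eqref{eq7} is the \emph{positive} part $z^+$, so at a fixed point it becomes $(u_k)^+$, and testing against the negative part of $u_k$ will annihilate the transport term. Fix $\sigma\in[0,1]$ and let $z$ be a fixed point of $\mathcal{T}(\cdot,\sigma)$, so that $u_k:=\mathcal{T}(z,\sigma)=z$, where $(u_k,p_k)$ solves the linearized system \eqref{eq7}--\eqref{eq8}. Since $z=u_k$, the transport term in \eqref{eq7} is $\sigma\int_{\mathbb{T}^3}(u_k)^+\nabla p_k\cdot\nabla\phi\;dx$. I would take $\phi=(u_k)_-=\min\{u_k,0\}\in H^1(\mathbb{T}^3)$ as test function (the negative part of an $H^1$ function is again in $H^1$).

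With this choice the transport term vanishes, since $\nabla(u_k)_-$ is supported on $\{u_k<0\}$, where $(u_k)^+=0$, hence $(u_k)^+\nabla p_k\cdot\nabla(u_k)_-=0$ a.e.; and the viscosity term is nonnegative, $\varrho\int_{\mathbb{T}^3}\nabla u_k\cdot\nabla(u_k)_-\;dx=\varrho\|\nabla(u_k)_-\|_{L^2(\mathbb{T}^3)}^2\geq 0$. Thus \eqref{eq7} reduces to
\[
\int_{\mathbb{T}^3}(D^\alpha_\tau u)_k\,(u_k)_-\;dx+\varrho\|\nabla(u_k)_-\|_{L^2(\mathbb{T}^3)}^2=0.
\]

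The crux is to show that the first integral is nonnegative, and this is where the nonnegativity of the past data and the monotonicity of $\{\lambda_k\}$ enter. I would rewrite the discrete Caputo derivative \eqref{dis_left_caputoIN} by Abel summation in the form
\[
(D^\alpha_\tau u)_k=\Gamma_\alpha\tau^{-\alpha}\left[\lambda_k(u_k-u_0)+\sum_{j=1}^{k-1}(\lambda_{k-j}-\lambda_{k-j+1})(u_k-u_j)\right],
\]
whose coefficients are all nonnegative by the strict monotonicity $\lambda_{k-j}>\lambda_{k-j+1}$ from \eqref{est_sum_lambda}. By the standing assumption $u_j\geq 0$ for $j=0,\dots,k-1$, one has the pointwise bound $(u_k-u_j)(u_k)_-\geq\big((u_k)_-\big)^2\geq 0$: on $\{u_k\geq 0\}$ both sides vanish, while on $\{u_k<0\}$ one has $u_k-u_j\leq u_k<0$ and $(u_k)_-=u_k$, so $(u_k-u_j)(u_k)_-=(u_k-u_j)u_k\geq u_k^2$. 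Multiplying by the nonnegative coefficients, which telescope to $\lambda_1=1$, gives
\[
\int_{\mathbb{T}^3}(D^\alpha_\tau u)_k\,(u_k)_-\;dx\geq\Gamma_\alpha\tau^{-\alpha}\|(u_k)_-\|_{L^2(\mathbb{T}^3)}^2.
\]
(Equivalently, one may invoke the convexity inequality $\Phi'(u_k)(D^\alpha_\tau u)_k\geq(D^\alpha_\tau\Phi(u))_k$ with $\Phi(r)=\tfrac12(r_-)^2$, the natural generalization of \Cref{lem:fact2} proved by the same manipulation; since $(u_j)_-=0$ for $j<k$, only the $k$-th increment survives.)

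Inserting this estimate into the identity above yields $\Gamma_\alpha\tau^{-\alpha}\|(u_k)_-\|_{L^2(\mathbb{T}^3)}^2+\varrho\|\nabla(u_k)_-\|_{L^2(\mathbb{T}^3)}^2\leq 0$, which forces $(u_k)_-=0$ a.e., i.e.\ $u_k\geq 0$; since $z=u_k$, the fixed point is nonnegative. I expect the only genuinely delicate step to be the sign of the memory term: one must verify that the positive/negative-part splitting interacts correctly with the past increments $u_0,\dots,u_{k-1}$, which is precisely where the nonnegativity of the previous time steps and the decay of $\{\lambda_k\}$ are used. Everything else—the vanishing of the transport term and the coercivity of the viscosity term—is routine.
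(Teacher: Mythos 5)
Your proof is correct and follows essentially the same argument as the paper: test \eqref{eq7} with the negative part $u_{k-}$, observe that the transport term vanishes because $u_k^{+}\nabla u_{k-}=0$ a.e., and get the sign of the memory term from the Abel-summed form of $(D^{\alpha}_{\tau}u)_k$ together with the monotonicity $\lambda_{k-j}>\lambda_{k-j+1}$ and the standing hypothesis $u_j\geq 0$ for $j<k$. The only difference is that the paper's proof of this lemma also establishes $p_k\geq 0$ (by testing \eqref{eq8} with $p_{k-}$ and the same $\lambda$-monotonicity argument); that is not literally part of the statement about fixed points of $\mathcal{T}$, but it is needed so that the hypothesis $u_j,p_j\geq 0$ for all previous steps --- on which your estimate of the memory term relies --- propagates to the next time step of the scheme.
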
 
\begin{proof}
Choose $\psi=p_{k-}$ as test functions and get:
\begin{equation*}
 \int_{\mathbb{T}^3}(D^{\alpha}_{\tau}p)_k  p_{k-}\;dx+\int_{\text{\ensuremath{\mathbb{T}}}^{3}}\lvert(-\Delta)^{s/2}p_{k-}\lvert^{2}\;dx\\
 +\varepsilon\int_{\text{\ensuremath{\mathbb{T}}}^{3}}\lvert\nabla p_{k-}\lvert^{2}\;dx+\int_{\text{\ensuremath{\mathbb{T}}}^{3}}u_k^{2}(-p_{k-})\;dx = 0.
\end{equation*}
All the terms except for the first one are non-negative. Using \Cref{dis_left_caputo1} we rewrite the first term as 
\begin{align*}
  \int_{\mathbb{T}^3}(D^{\alpha}_{\tau}p)_k  p_{k-}\;dx
=&  \Gamma_{\alpha}\tau^{-\alpha}\int_{\text{\ensuremath{\mathbb{T}}}^{3}}
\bigg(\lambda_kp_{in}(-p_{k-})
+\sum\limits_{j=1}^{k-1}(\lambda_{k-j}-\lambda_{k-j+1})p_{j}(-p_{k-})
\bigg)dx\\
&+\Gamma_{\alpha}\tau^{-\alpha}\int_{\text{\ensuremath{\mathbb{T}}}^{3}}p_{k-}^{2}\;dx.
\end{align*}
We can see that every term is non-negative, since
$\ p_{j}\geq0,\ j=0,1,...,k-1,$ and 
\[
\lambda_{k-1}-\lambda_k\geq0,\ k=2,3,...\ 
\]
Therefore, we must have $p_{k-}=0$,
in other words, $p_k\geq0.$

Now we show the non-negativity of $u_k.$ With $\phi = u_{k-}$ as test function we get:
\begin{equation*}
  \int_{\mathbb{T}^3}(D^{\alpha}_{\tau}u)_k  u_{k-}\;dx+\varrho\int_{\text{\ensuremath{\text{\ensuremath{\mathbb{T}}}^{3}}}}|\nabla u_{k-}|^{2}\;dx\\
=  -\sigma\int_{\text{\ensuremath{\text{\ensuremath{\mathbb{T}}}^{3}}}}u_k^{+}\nabla p_k \cdot\nabla u_{k-}\;dx.
\end{equation*}
We have: 
\begin{align*}
\Gamma_{\alpha}\tau^{-\alpha}\int_{\text{\ensuremath{\mathbb{T}}}^{3}}\lambda_ku_{in}(-u_{k-})+\sum\limits_{j=1}^{k-1}(\lambda_{k-j}-\lambda_{k-j+1})u_{j}(-u_{k-})\;dx\\+\Gamma_{\alpha}\tau^{-\alpha}\int_{\text{\ensuremath{\mathbb{T}}}^{3}}u_{k-}^{2}\;dx
+\varrho\int_{\text{\ensuremath{\text{\ensuremath{\mathbb{T}}}^{3}}}}|\nabla u_{k-}|^{2}\;dx=0.
\end{align*}
Since every term on the left is non negative, we must have
$u_{k-}=0,$ in other words, $u_k\geq0.$ 
\end{proof}

We next prove that $\mathcal{T}(\cdot,1)$ has a fixed point.
First, $\mathcal{T}(\cdot ,0)$ is constant since eq.~\eqref{eq7} becomes independent of $z$ when $\sigma=0$.
Next, we show that $\mathcal{T}$ is continuous and compact. 
Taking $\phi=u_k$ in \eqref{eq7}, we have 
\begin{equation*}
\int_{\mathbb{T}^3}(D^{\alpha}_{\tau}u)_k  u_k\;dx+\sigma\int_{\text{\ensuremath{\text{\ensuremath{\mathbb{T}}}^{3}}}}z^{+}\nabla p_k \cdot\nabla u_k\;dx
+\varrho\int_{\text{\ensuremath{\text{\ensuremath{\mathbb{T}}}^{3}}}}\lvert\nabla u_k\lvert^{2}\;dx =0,
\end{equation*}
which implies
\begin{align*}
 & \Gamma_{\alpha}\tau^{-\alpha}\int_{\text{\ensuremath{\mathbb{T}}}^{3}}u_k^{2}\;dx+\varrho\int_{\text{\ensuremath{\text{\ensuremath{\mathbb{T}}}^{3}}}}|\nabla u_k|^{2}\;dx\\
= & -\sigma\int_{\text{\ensuremath{\text{\ensuremath{\mathbb{T}}}^{3}}}}z^{+}\nabla p_k \cdot\nabla u_k\;dx-\Gamma_{\alpha}\tau^{-\alpha}\int_{\text{\ensuremath{\mathbb{T}}}^{3}}\left[\sum\limits_{j=0}^{k-2}\lambda_{k-j}(u_{j+1}-u_{j})-u_{k-1}\right]u_k\;dx\\
\leq & \; \frac{\varrho}{2}\int_{\text{\ensuremath{\text{\ensuremath{\mathbb{T}}}^{3}}}}\lvert\nabla u_k\lvert^{2}\;dx+\frac{2\sigma^2}{\varrho}\int_{\text{\ensuremath{\text{\ensuremath{\mathbb{T}}}^{3}}}}(z^{+})^{2}\lvert\nabla p_k\lvert^{2}\;dx+\frac{\Gamma_{\alpha}\tau^{-\alpha}}{2}\int_{\text{\ensuremath{\mathbb{T}}}^{3}}u_k^{2}\;dx\\
& +16\Gamma_{\alpha}\tau^{-\alpha}\sum\limits_{j=0}^{k-2}\lambda_{k-j}^2\int_{\text{\ensuremath{\mathbb{T}}}^{3}}u_{j}^2\;dx.
\end{align*}
Reorganizing the terms, we have
\begin{align}\label{eq:star}
  \frac{\Gamma_{\alpha}\tau^{-\alpha}}{2}\int_{\text{\ensuremath{\mathbb{T}}}^{3}}u_k^{2}\;dx+\frac{\varrho}{2}\int_{\text{\ensuremath{\text{\ensuremath{\mathbb{T}}}^{3}}}}|\nabla u_k|^{2}\;dx
  \leq   & {{\mu}}\int_{\text{\ensuremath{\text{\ensuremath{\mathbb{T}}}^{3}}}}(z^{+})^{4}\;dx+\frac{1}{ {{\mu}}}\int_{\text{\ensuremath{\text{\ensuremath{\mathbb{T}}}^{3}}}}\lvert\nabla p_k\lvert^{4}\;dx\\
  &+16\Gamma_{\alpha}\tau^{-\alpha}\sum\limits_{j=0}^{k-2}\lambda_{k-j}^2\int_{\text{\ensuremath{\mathbb{T}}}^{3}}u_{j}^2\;dx.\notag
\end{align}
Now we need to estimate $\nabla p_k$. We take $\psi = \Delta p_k$ in \eqref{eq8}, and get
\begin{equation*}
\int_{\mathbb{T}^3}(D^{\alpha}_{\tau}p)_k   \Delta p_k\;dx+\int_{\text{\ensuremath{\mathbb{T}}}^{3}}(-\Delta)^{s/2} p_k (-\Delta)^{s/2} \Delta p_k\;dx
+\varepsilon\int_{\text{\ensuremath{\mathbb{T}}}^{3}}\nabla p_k \cdot\nabla \Delta p_k\;dx-\int_{\text{\ensuremath{\mathbb{T}}}^{3}}(z^+)^{2}\Delta p_k\;dx =0,
\end{equation*}
which implies
\begin{align*}
 & \Gamma_{\alpha}\tau^{-\alpha}\int_{\text{\ensuremath{\mathbb{T}}}^{3}}
|\nabla p_k|^{2}\;dx
+\varepsilon\int_{\text{\ensuremath{\text{\ensuremath{\mathbb{T}}}^{3}}}}|\Delta p_k|^{2}\;dx\\
= & -\int_{\text{\ensuremath{\mathbb{T}}}^{3}}z^{2}\Delta p_k\;dx-\Gamma_{\alpha}\tau^{-\alpha}\int_{\text{\ensuremath{\mathbb{T}}}^{3}}\left[\sum\limits_{j=0}^{k-2}\lambda_{k-j}(\nabla p_{j+1}-\nabla p_{j})-\nabla p_{k-1}\right] \cdot\nabla p_k\;dx\\
\leq & \;\frac{2}{\varepsilon}\int_{\text{\ensuremath{\mathbb{T}}}^{3}}(z^+)^{4}\;dx +\frac{\varepsilon }{2} \int_{\text{\ensuremath{\mathbb{T}}}^{3}}\left|\Delta p_k\right|^2\;dx +\frac{\Gamma_{\alpha}\tau^{-\alpha}}{2}\int_{\text{\ensuremath{\mathbb{T}}}^{3}}\left|\nabla p_k\right|^{2}\;dx\\
& +16\Gamma_{\alpha}\tau^{-\alpha}\sum\limits_{j=0}^{k-2}\lambda_{k-j}^2\int_{\text{\ensuremath{\mathbb{T}}}^{3}}\left|\nabla p_{j}\right|^2\;dx .
\end{align*}
Combining the similar terms, we get
\begin{align*}
  \frac{\Gamma_{\alpha}\tau^{-\alpha}}{2}\int_{\text{\ensuremath{\mathbb{T}}}^{3}}\left|\nabla p_k\right|^{2}\;dx+\frac{\varepsilon}{2}\int_{\text{\ensuremath{\text{\ensuremath{\mathbb{T}}}^{3}}}}|\Delta p_k|^{2}\;dx
\leq &  \frac{2}{\varepsilon}\int_{\text{\ensuremath{\mathbb{T}}}^{3}}(z^+)^{4}\;dx+16\Gamma_{\alpha}\tau^{-\alpha}\sum\limits_{j=0}^{k-2}\lambda_{k-j}^2\int_{\text{\ensuremath{\mathbb{T}}}^{3}}\left|\nabla p_{j}\right|^2\;dx.
\end{align*}
Given $z^{+}\in L^{4}$, we have $\nabla p_k\in H^1$, which suggests that $\nabla p_k$ is uniformly bounded in $L^4$, and therefore, $\norm{u_k}_{H^1}\leq C$ from \eqref{eq:star}. 
This shows that $\mathcal{T}$ is bounded as an operator $L^{6-\delta}(\mathbb{T}^3)\to H^{1}(\mathbb{T}^3)$. 
The compactness of $\mathcal{T}$ directly
follows the compact embedding $H^1(\mathbb{T}^{3})\hookrightarrow L^{6-\delta}(\mathbb{T}^{3})$, while the (sequential) continuity of $\mathcal{T}$ is proved via a standard argument.

Next, we show that fixed points of $\mathcal{T}(\cdot ,\sigma)$ are uniformly bounded in $\sigma$ for $\sigma\in [0,1]$. We consider $\phi=u_k$ and $\psi=\Delta p_k$ as test functions respectively in \eqref{eq5} and \eqref{eq6}, and take summation of the equations:
\begin{align*}
 & \Gamma_{\alpha}\tau^{-\alpha}\int_{\text{\ensuremath{\mathbb{T}}}^{3}}\left[\sum\limits_{j=0}^{k-1}\lambda_{k-j}(u_{j+1}-u_{j})\right]u_k\;dx + \frac{\sigma}{2}\Gamma_{\alpha}\tau^{-\alpha}\int_{\mathbb{T}^{3}}\left[\sum\limits_{j=0}^{k-1}\lambda_{k-j}(\nabla p_{j+1}-\nabla p_{j})\right]\cdot\nabla p_k\;dx\\
 &+\varrho\int_{\text{\ensuremath{\text{\ensuremath{\mathbb{T}}}^{3}}}}|\nabla u_k|^{2}\;dx
+\frac{\sigma}{2}\int_{\text{\ensuremath{\mathbb{T}}}^{3}}|(-\Delta)^{s/2}\nabla p_k|^{2}\;dx
+ \frac{\sigma}{2}\varepsilon\int_{\text{\ensuremath{\mathbb{T}}}^{3}}(\Delta p_k)^{2}\;dx=  0.
\end{align*}
From \Cref{lem:fact2} we have
\begin{align}\label{eq:energy.sigma}
 & \frac{1}{2}\Gamma_{\alpha}\tau^{-\alpha}\int_{\text{\ensuremath{\mathbb{T}}}^{3}}\sum\limits_{j=0}^{k-1}\lambda_{k-j}(u_{j+1}^{2}-u_{j}^{2})\;dx+ \frac{\sigma}{4}\Gamma_{\alpha}\tau^{-\alpha}\int_{\mathbb{T}^{3}}\sum\limits_{j=0}^{k-1}\lambda_{k-j}(|\nabla p_k|_{j+1}^{2}-|\nabla p_k|_{j}^{2})\;dx\\
 &+\varrho\int_{\text{\ensuremath{\text{\ensuremath{\mathbb{T}}}^{3}}}}|\nabla u_k|^{2}\;dx
+\frac{\sigma}{2}\int_{\text{\ensuremath{\mathbb{T}}}^{3}}|(-\Delta)^{s/2}\nabla p_k|^{2}\;dx+\frac{\sigma}{2}\varepsilon\int_{\text{\ensuremath{\mathbb{T}}}^{3}}(\Delta p_k)^{2}\;dx\leq 0.\notag
\end{align}
\Cref{lem:fact1} implies that $\int_{\mathbb{T}^3}u_k^2+\frac{\sigma}{2}|\nabla p_k|^2\; dx\leq\int_{\mathbb{T}^3}u_{in}^2+\frac{\sigma}{2}|\nabla p_k|^2\; dx$ for $\forall k=0, ..,N$.
Moreover, from \eqref{eq:energy.sigma} we have
$$
D^{\alpha}_{\tau}\left(\int_{\mathbb{T}^{3}}u_k^{2}+\frac{\sigma}{2}|\nabla p_k|^{2}\;dx\right) \leq -2\varrho\int_{\mathbb{T}^3}|\nabla u_k|^2\; dx-\sigma\int_{\text{\ensuremath{\mathbb{T}}}^{3}}|(-\Delta)^{s/2}\nabla p_k|^{2}\;dx-\sigma\varepsilon\int_{\text{\ensuremath{\mathbb{T}}}^{3}}(\Delta p_k)^{2}\;dx,
$$
which implies, using \eqref{eq:discreteFToC},
\begin{align*}
&\int_{\mathbb{T}^3}u_k^2+\frac{\sigma}{2}|\nabla p_k|^2\;dx + \frac{\tau ^{\alpha}}{\Gamma_{\alpha}}\sum\limits_{j=1}^k(k-j+1)^{\alpha-1} \left( 2\varrho\int_{\mathbb{T}^3}|\nabla u_{j}|^2\; dx +\sigma\int_{\text{\ensuremath{\mathbb{T}}}^{3}}|(-\Delta)^{s/2}\nabla p_{j}|^{2}\;dx+\sigma\varepsilon\int_{\text{\ensuremath{\mathbb{T}}}^{3}}(\Delta p_{j})^{2}\;dx\right)\\
&\leq \int_{\mathbb{T}^3}u_{in}^2+\frac{1}{2}|\nabla p_{in}|^{2}\;dx.
\end{align*}
The last estimate shows that $u_k$ is bounded in $H^1(\mathbb{T}^3)$ uniformly with respect to $\sigma$.

Hence Leray-Schauder fixed point theorem yields the existence of a fixed point $u_k\in H^1(\mathbb{T}^{3})$ for $\mathcal{T}(\cdot,1)$, that is,
a solution $(u_k,p_k)\in H^1(\mathbb{T}^{3})\times H^{2}(\mathbb{T}^{3})$ to 
\begin{equation}
\int_{\mathbb{T}^3}(D^{\alpha}_{\tau}u)_k  \phi \;dx+\int_{\text{\ensuremath{\text{\ensuremath{\mathbb{T}}}^{3}}}}u_k\nabla p_k \cdot\nabla\phi \;dx
+\varrho\int_{\text{\ensuremath{\text{\ensuremath{\mathbb{T}}}^{3}}}}\nabla u_k \cdot\nabla\phi \;dx  =0,\label{eq15-1}
\end{equation}
and 
\begin{equation}
\int_{\mathbb{T}^3}(D^{\alpha}_{\tau}p)_k  \psi \;dx+\int_{\text{\ensuremath{\mathbb{T}}}^{3}}(-\Delta)^{s/2}p_k (-\Delta)^{s/2}\psi \;dx
+\varepsilon\int_{\text{\ensuremath{\mathbb{T}}}^{3}}\nabla p_k \cdot\nabla\psi \;dx-\int_{\text{\ensuremath{\mathbb{T}}}^{3}}u_{k}^{2}\psi \;dx  =0,\label{eq16-1}
\end{equation}
for all $\phi,\psi\in H^1(\mathbb{T}^{3})$, such that $u_k,p_k\geq0$ a.e. in $\mathbb{T}^{3}$ and 
\begin{align*}
H_k+ \frac{\tau^{\alpha}}{\Gamma_{\alpha}}\sum\limits _{i=1}^k(k-i+1)^{\alpha-1}\left(\varrho\int_{\text{\ensuremath{\text{\ensuremath{\mathbb{T}}}^{3}}}}|\nabla u_{i}|^{2}\;dx+\frac{1}{2}\int_{\text{\ensuremath{\mathbb{T}}}^{3}}|(-\Delta)^{s/2}\nabla p_{i}|^{2}\;dx+\frac{\varepsilon}{2}\int_{\text{\ensuremath{\mathbb{T}}}^{3}}(\Delta p_{i})^{2}\;dx\right)
\leq H(u_{in},p_{in}),
\end{align*}
using \Cref{prop:discreteFToC}.
{Taking test functions $\phi = 1$ and $\psi=1$ in \eqref{eq15-1} and \eqref{eq16-1} we get
$\int_{\mathbb{T}^3}(D^{\alpha}_{\tau}u)_k\;dx =0$, and 
$\int_{\mathbb{T}^3}(D^{\alpha}_{\tau}p)_k \;dx-\int_{\text{\ensuremath{\mathbb{T}}}^{3}}u_{k}^{2}\;dx  =0$.
Since $\int_{\mathbb{T}^3}(D^{\alpha}_{\tau}u)_k\;dx = \left( D^{\alpha}_{\tau}\int_{\mathbb{T}^3}u\;dx\right)_k$, 
\Cref{prop:discreteFToC} implies 
$$\int u_k\; dx=\int u_{in}\; dx \mbox{ for any }k=0, \dots,N.$$
Similarly,
$\left( D^{\alpha}_{\tau}\int_{\mathbb{T}^3}p\;dx\right)_k = \int_{{\mathbb{T}}^3}u_{k}^{2}\;dx \leq H(u_{in},p_{in})$ and
\begin{equation*}
\int_{\mathbb{T}^{3}}p_k\;dx\leq\int_{\mathbb{T}^3}p_{in}\;dx+\frac{\tau^{\alpha}}{\Gamma_{\alpha}}  H(u_{in},p_{in}) \sum\limits_{i=1}^k (k-i+1)^{\alpha -1}
\leq \int_{\mathbb{T}^{3}}p_{in}\;dx+\frac{1}{\alpha \Gamma_{\alpha}}H(u_{in},p_{in})  T^{\alpha},
\end{equation*}
for any $k=0, \cdots, N$.
}

\section{Limit $\tau\rightarrow0$}\label{sec:tauLimit}

For all $T>0$, let $N=T/\tau$. Define the piecewise constant
interpolant of $\{u_{k}\}$ and $\{p_{k}\}$, $k=0, ..,N$, respectively as
\begin{align*}
u^{(\tau)}(t)=u_{in}\chf{\{0\}}(t) + \sum\limits_{k=1}^{N}u_{k}\chf{ (t_{k-1},t_k]}(t),\\
p^{(\tau)}(t)=p_{in}\chf{\{0\}}(t) + \sum\limits_{k=1}^{N}p_{k}\chf{ (t_{k-1},t_k]}(t),\\
H^{(\tau)}(t)=\int_{\mathbb{T}^3}\bigg(
(u^{(\tau)}(t,x))^2+\frac{1}{2}|\nabla p^{(\tau)}(t,x)|^2\; 
\bigg)dx.
\end{align*}
By \eqref{eq:disCaputo2} we have 
$$
D_{\tau}^{\alpha}u^{(\tau)}(t)=\Gamma_\alpha\tau^{-\alpha} \sum_{k=1}^n \sum_{j=0}^{k-1}
\lambda_{k-j}(u_{j+1}-u_j)  \chf{(t_{k-1},t_k]}(t).
$$
So, \eqref{eq15-1} and \eqref{eq16-1} can be rewritten as
\begin{align}\label{eq26}
\int_{0}^{T-\tau}\int_{\text{\ensuremath{\mathbb{T}}}^{3}}\left(D_{\tau}^{\alpha}u^{(\tau)}\right)\phi \;dxdt+\int_{0}^{T-\tau}\int_{\text{\ensuremath{\text{\ensuremath{\mathbb{T}}}^{3}}}}u^{(\tau)}\nabla p^{(\tau)} \cdot\nabla\phi \;dxdt \\
+\varrho\int_{0}^{T-\tau}\int_{\text{\ensuremath{\text{\ensuremath{\mathbb{T}}}^{3}}}}\nabla u^{(\tau)} \cdot\nabla\phi \;dxdt & =0,\ \forall\phi\in L^{2}(0,T-\tau;\ H^{1}(\mathbb{T}^{3})),\notag
\end{align}
\begin{align}\label{eq27}
\int_{0}^{T-\tau}\int_{\mathbb{T}^{3}}\left(D_{\tau}^{\alpha}p^{(\tau)}\right)\psi \;dxdt+\int_{0}^{T-\tau}\int_{\text{\ensuremath{\mathbb{T}}}^{3}}(-\Delta)^{s/2}p^{(\tau)} (-\Delta)^{s/2}\psi \;dxdt\\
+\varepsilon\int_{0}^{T-\tau}\int_{\text{\ensuremath{\mathbb{T}}}^{3}}\nabla p^{(\tau)} \cdot\nabla\psi \;dxdt-\int_{0}^{T-\tau}\int_{\text{\ensuremath{\mathbb{T}}}^{3}}(u^{(\tau)})^{2}\psi \;dxdt & =0,\ \forall\psi\in L^{2}(0,T-\tau;\ H^{1}(\mathbb{T}^{3})),\notag
\end{align}
with the energy estimates
\begin{equation}\label{eq32}
H^{(\tau)}(T) + \frac{1}{\Gamma_{\alpha}  T^{1-\alpha}}\int_0^T \int_{\text{\ensuremath{\text{\ensuremath{\mathbb{T}}}^{3}}}}\left(\varrho |\nabla u^{(\tau)}|^{2}+\frac{1}{2}|(-\Delta)^{s/2}\nabla p^{(\tau)}|^{2}+\frac{\varepsilon}{2}(\Delta (p^{(\tau)})^{2}\right)\;dxdt \leq  H(u_{in},p_{in}).
\end{equation}
Moreover we have the conservation of mass for $u^{(\tau)}$
\begin{equation}
\int_{\text{\ensuremath{\mathbb{T}}}^{3}}u^{(\tau)}\;dx=\int_{\text{\ensuremath{\mathbb{T}}}^{3}}u_{in}\;dx,\label{eq30}
\end{equation}
and $L^1$-bound for $p^{(\tau)}$
\begin{equation}
\int_{\mathbb{T}^{3}}p^{(\tau)}\;dx\leq\int_{\mathbb{T}^{3}}p_{in}\;dx+\frac{1}{\alpha \Gamma_{\alpha}}H(u_{in},p_{in})  T^{\alpha}.\label{eq31}
\end{equation}
We estimate the time
derivative of the density function: 
\begin{align*}
\left\vert\int_{0}^{T}\int_{\text{\ensuremath{\mathbb{T}}}^{3}}\left(D_{\tau}^{\alpha}u^{(\tau)}\right)\phi dxdt\right\vert\leq &
 \int_{0}^{T}\left\Vert \nabla p^{(\tau)}\right\Vert _{L^{2}(\mathbb{T}^{3})}\left\Vert u^{(\tau)}\nabla\phi\right\Vert _{L^{2}(\mathbb{T}^{2})}dt\\
 & +\varrho\left\Vert u^{(\tau)}\right\Vert _{L^{2}(0,T;H^{1}(\mathbb{T}^{3}))}\left\Vert \phi\right\Vert _{L^{2}(0,T;H^{1}(\mathbb{T}^{3}))}\\
\leq & \left\Vert \nabla p^{(\tau)}\right\Vert _{L^{\infty}(0,T;L^{2}(\mathbb{T}^{3}))}\left\Vert u^{(\tau)}\right\Vert _{L^{2}(0,T;L^{6}(\mathbb{T}^{3}))}\left\Vert \nabla\phi\right\Vert _{L^{2}(0,T;L^{3}(\mathbb{T}^{3}))}\\
 & +\varrho\left\Vert u^{(\tau)}\right\Vert _{L^{2}(0,T;H^{1}(\mathbb{T}^{3}))}\left\Vert \phi\right\Vert _{L^{2}(0,T;H^{1}(\mathbb{T}^{3}))}\\
\leq & C(T,\varrho)\left\Vert \phi\right\Vert _{L^{2}(0,T;W^{1,3}(\mathbb{T}^{3}))}.
\end{align*}
Therefore
\begin{align}
\left\Vert D_{\tau}^{\alpha}u^{(\tau)}\right\Vert _{L^{2}(0,T;(W^{1,3}(\mathbb{T}^{3}))')}\leq C(T,\varrho).\label{eqCaputoUBdd}
\end{align}

From \eqref{eq32} we have that 
$u^{(\tau)}$ is uniformly bounded in $L^{2}(0,T;H^{1}(\mathbb{T}^{3}))$. 
Since $H^1(\mathbb{T}^{3})$ is compactly embedded in $L^{6-\delta}(\mathbb{T}^{3})$ for $0<\delta<6$, \Cref{thm:mainII} with $X = H^1(\mathbb{T}^3)$, $B = L^{6-\delta}(\mathbb{T}^3)$, $Y = (W^{1,3})'(\mathbb{T}^3)$ yields

\[
 {u}^{(\tau)}\rightarrow{u}\ in\ L^{2}(0,T; L^{6-\delta}(\mathbb{T}^{3})),
\]
and we also have

\[
 {p}^{(\tau)}\rightharpoonup^{*} {p}\ in\ L^{\infty}(0,T; L^{6}(\mathbb{T}^{3})),
\]
and \begin{align*}u^{(\tau)}\rightarrow\ u\ a.e.\ in\ \mathbb{T}^{3}\times[0,T].\label{AEConvU} \end{align*}

Now we start to take the limit as $\tau\rightarrow0.$ We first look at the second and third terms of \eqref{eq27}. From \eqref{eq32} we know that 
\[
(-\Delta)^{s/2}\nabla p^{(\tau)}\rightharpoonup(-\Delta)^{s/2}\nabla p,\ in\ L^{2}(0,T;L^{2}(\mathbb{T}^{3})),
\]
and
\[
\nabla p^{(\tau)}\rightharpoonup\nabla p,\ in\ L^q(0,T;L^{2}(\mathbb{T}^{3})),\forall\  q<\infty,
\]
as $\tau \rightarrow 0$, and therefore
\[
\int_{0}^{T-\tau}\int_{\text{\ensuremath{\mathbb{T}}}^{3}}(-\Delta)^{s/2}p^{(\tau)} (-\Delta)^{s/2}\psi \;dxdt\rightarrow\int_{0}^{T}\int_{\text{\ensuremath{\mathbb{T}}}^{3}}(-\Delta)^{s/2}p^{} (-\Delta)^{s/2}\psi \;dxdt,
\]
\[
\varepsilon\int_{0}^{T-\tau}\int_{\text{\ensuremath{\mathbb{T}}}^{3}}\nabla p^{(\tau)} \cdot\nabla\psi \;dxdt\rightarrow\varepsilon\int_{0}^{T}\int_{\text{\ensuremath{\mathbb{T}}}^{3}}\nabla p^{} \cdot\nabla\psi \;dxdt.
\]

Now consider the fourth term. 
We only need to prove that $(u^{(\tau)})^2$ converges to ${u}^{2}$. For all 
$\psi \in L^2(0,T;L^{4}(\mathbb{T}^{3})),$ consider
\begin{align}
\label{I1I2u^2}\int\int[( {u}^{(\tau)})^{2}-{u}^{2}] \psi \;dxdt & =\int\int[( {u}^{(\tau)})^{2}- {u}^{(\tau)}{u}+ {u}^{(\tau)}{u}-{u}^{2}]\psi \;dxdt\\
 & =\int\int {u}^{(\tau)}\psi ( {u}^{(\tau)}-{u})\;dxdt+\int\int{u}\psi ( {u}^{(\tau)}-{u})\;dxdt\nonumber\\
 & =I_{1}+I_{2}.\nonumber
\end{align}
First, we look at $I_{1}$:
\begin{align*}
I_{1}
 & \leq\left\Vert  {u}^{(\tau)}\psi\right\Vert _{L^{2}(0,T;L^{4/3}(\mathbb{T}^{3}))}\left\Vert  {u}^{(\tau)}-{u}\right\Vert _{L^{2}(0,T;L^4(\mathbb{T}^{3}))}\\
 & \leq\left\Vert  {u}^{(\tau)}\right\Vert _{L^{\infty}(0,T;L^{2}(\mathbb{T}^{3}))}\left\Vert \psi\right\Vert _{L^{2}(0,T;L^{4}(\mathbb{T}^{3}))}\left\Vert  {u}^{(\tau)}-{u}\right\Vert _{L^{2}(0,T;L^{4}(\mathbb{T}^{3}))}.
\end{align*}
Therefore, $I_{1}\rightarrow 0$ as $\tau\rightarrow 0.$ Furthermore, the convergence $I_{2}\rightarrow 0$ as $\tau\rightarrow 0$ follows from the weak convergence of $u^{(\tau)}$ in $L^2(0,T; L^4(\mathbb{T}^3))$, and $u\psi \in L^2(0,T; L^\frac{4}{3}(\mathbb{T}^3))$.

By density argument and \Cref{prop.conv.dis.byparts} we obtain
\[
\int_{0}^{T}\int_{\text{\ensuremath{\mathbb{T}}}^{3}}\left(D_{\tau}^{\alpha} {u}^{(\tau)}\right)\phi \;dxdt\rightarrow \int_{0}^{T}\int_{\text{\ensuremath{\mathbb{T}}}^{3}}\langle D_{t}^{\alpha}{u}, \phi \rangle \;dxdt, \ as \ \tau\rightarrow 0.
\]

Now we look at the second term in \eqref{eq26}. 
Since 
$\nabla p^{(\tau)}$ is uniformly bounded in $L^{\infty}(0,T;L^{2}(\mathbb{T}^{3}))$ from \eqref{eq32}, we have the convergence 
\begin{align*}
\nabla p^{(\tau)}& \rightharpoonup^*\nabla p\ in\ L^{\infty}(0,T;L^{2}(\mathbb{T}^{3})).
\end{align*}
Moreover, $(-\Delta)^{s/2} p^{(\tau)}\in L^{2}(0,T;H^{1}(\mathbb{T}^{3}))$
from \eqref{eq32}, and, by Sobolev embedding, 
$\nabla p^{(\tau)}$ is uniformly bounded in $L^{2}(0,T;L^{\frac{6}{3-2s}}(\mathbb{T}^{3})).$
Thus, 
there exists
a subsequence $\nabla p^{(\tau)}$ that converges weakly in\\ $L^{2}(0,T;L^{\frac{6}{3-2s}}(\mathbb{T}^{3})).$
Consider
\begin{align}\label{I1I2up}
\int\int[ {u}^{(\tau)}\nabla p^{(\tau)}-{u}\nabla p]\cdot\nabla \phi\;dxdt & =\int\int[ {u}^{(\tau)}\nabla p^{(\tau)}- {u}\nabla p^{(\tau)}+ {u}\nabla p^{(\tau)}-{u}\nabla p]\cdot\nabla \phi\;dxdt\\
 & =\int\int {u}\nabla \phi\cdot(\nabla p^{(\tau)}-\nabla p)\;dxdt+\int\int\nabla p^{(\tau)}\cdot\nabla \phi( {u}^{(\tau)}-{u})\;dxdt\notag\\
 & =I_{1}+I_{2}.\notag
\end{align}
{Using $\nabla p^{(\tau)} \rightharpoonup^*\nabla p\ \textrm{ in}\ L^{\infty}(0,T;L^{2}(\mathbb{T}^{3}))$, $\nabla \phi\in L^{2}(0,T;L^{\frac{2(6-\delta)}{4-\delta}}(\mathbb{T}^{3}))$ and $u \in L^{2}(0,T;L^{6-\delta}(\mathbb{T}^{3}))$, so that $u\nabla \phi$ is bounded in $L^{1}(0,T;L^{2}(\mathbb{T}^{3})),$  we have that $I_{1}\rightarrow0$ as $\tau\rightarrow0.$} We bound $I_{2}$ as 
\begin{align*}
I_{2}  
  \leq\left\Vert \nabla p^{(\tau)}\right\Vert _{L^{\infty}(0,T;L^{2}(\mathbb{T}^{3}))}\left\Vert \nabla  \phi\right\Vert _{L^{2}(0,T;L^{\frac{2(6-\delta)}{4-\delta}}(\mathbb{T}^{3}))}\left\Vert  {u}^{(\tau)}-{u}\right\Vert _{L^{2}(0,T;L^{6-\delta}(\mathbb{T}^{3}))}.
\end{align*}
We have previously concluded that $ {u}^{(\tau)}\rightarrow{u}$ in $L^{2}(0,T;L^{6-\delta}(\mathbb{T}^{3})),$
and $\nabla p^{(\tau)}$ is uniformly bounded in $L^{\infty}(0,T;L^{2}(\mathbb{T}^{3})).$ Therefore,
$I_{2}\rightarrow0$ as $\tau\rightarrow0.$ Hence, $\forall\phi\in L^{2}(0,T-\tau;\ W^{1,q}(\mathbb{T}^{3}))$ with $q>3$ we have 
\[
\int_{0}^{T-\tau}\int_{\text{\ensuremath{\text{\ensuremath{\mathbb{T}}}^{3}}}}u^{(\tau)}\nabla p^{(\tau)} \cdot\nabla\phi \;dxdt\rightarrow\int_{0}^{T}\int_{\text{\ensuremath{\text{\ensuremath{\mathbb{T}}}^{3}}}}u^{}\nabla p^{} \cdot\nabla\phi \;dxdt.
\]

Next, we consider the third term in \eqref{eq26}. From the energy estimates \eqref{eq32} we have 
\[
\nabla {u}^{(\tau)}\rightharpoonup\nabla{u},\ in\ L^{2}(0,T;L^{2}(\mathbb{T}^{3})),
\]
and therefore, as $\tau \rightarrow 0$
\[
\varrho\int_{0}^{T-\tau}\int_{\text{\ensuremath{\text{\ensuremath{\mathbb{T}}}^{3}}}}\nabla {u}^{(\tau)} \cdot\nabla\phi \;dxdt\rightarrow\varrho\int_{0}^{T}\int_{\text{\ensuremath{\text{\ensuremath{\mathbb{T}}}^{3}}}}\nabla{u} \cdot\nabla\phi \;dxdt.
\]

Now we proceed to the first term of\textcolor{black}{{} {} }\eqref{eq27}:
\begin{align*}
\left\vert\int_{0}^{T}\int_{\text{\ensuremath{\mathbb{T}}}^{3}}\left(D_{\tau}^{\alpha}p^{(\tau)}\right)\psi \;dxdt\right\vert\leq &
\; \left\Vert (-\Delta)^{s/2}p^{(\tau)}\right\Vert _{L^{2}(0,T;L^{2}(\mathbb{T}^{3}))}\left\Vert (-\Delta)^{s/2}\psi\right\Vert _{L^{2}(0,T;L^{2}(\mathbb{T}^{3}))}\\
 & +\left\Vert \nabla p^{(\tau)}\right\Vert _{L^{2}(0,T;L^{4}(\mathbb{T}^{3}))}\left\Vert \psi\right\Vert _{L^{2}(0,T;L^{4/3}(\mathbb{T}^{3}))}\\
 & +\left\Vert ( {u}^{(\tau)})^{2}\right\Vert _{L^{2}(0,T;L^{3/2}(\mathbb{T}^{3}))}\left\Vert \psi\right\Vert _{L^{2}(0,T;L^{3}(\mathbb{T}^{3}))}.
\end{align*}
Since $( {u}^{(\tau)})^{2}$ is uniformly bounded in $L^{\infty}(0,T;L^{1}(\mathbb{T}^{3}))\cap L^{1}(0,T;L^{3}(\mathbb{T}^{3}))$ by \eqref{eq32}, interpolation yields 
\[
\left\Vert ( {u}^{(\tau)})^{2}\right\Vert _{L^{2}(0,T;L^{3/2}(\mathbb{T}^{3}))}\leq\left\Vert ( {u}^{(\tau)})^{2}\right\Vert _{L^{\infty}(0,T;L^{1}(\mathbb{T}^{3}))}\left\Vert ( {u}^{(\tau)})^{2}\right\Vert _{L^{1}(0,T;L^{3}(\mathbb{T}^{3}))}.
\]Then $D_{\tau}^{\alpha}p^{(\tau)}$ is uniformly bounded in the dual space of $L^{2}(0,T;H^{1}(\mathbb{T}^{3}))$ by some constant
function depending on $T$ and on the initial data: 
\begin{align}
\left\Vert D_{\tau}^{\alpha}p^{(\tau)}\right\Vert _{L^{2}(0,T; H^{-1}(\mathbb{T}^{3}))}\leq C(T, H_{in}).\label{eqCaputoPBound}
\end{align}
Moreover, taking into account that $p^{(\tau)}\in L^{2}(0,T;H^{2}(\mathbb{T}^{3}))$, we can use \Cref{thm:mainII} with $X = H^1(\mathbb{T}^3)$, $B = L^2(\mathbb{T}^3)$, $Y = H^{-1}(\mathbb{T}^3)$ to conclude that 
\begin{align*}
p^{(\tau)}\rightarrow p \ in \  L^{2}(0,T;L^{2}(\mathbb{T}^{3})),
\end{align*}
and 
\begin{align*}
p^{(\tau)}\rightarrow\ p\ a.e.\ in\ \mathbb{T}^{3}\times[0,T].\label{AEConv}
\end{align*}
By \Cref{prop.conv.dis.byparts}, we get
\[
\int_{0}^{T-\tau}\int_{\text{\ensuremath{\mathbb{T}}}^{3}}\left(D_{\tau}^{\alpha}p^{(\tau)}\right)\psi \;dxdt\rightarrow \int_{0}^{T}\int_{\text{\ensuremath{\mathbb{T}}}^{3}}\langle D_{t}^{\alpha}p^{}, \psi \rangle \;dxdt.
\]
Summarizing, we have (after the density argument)
\begin{align}
\int_{0}^{T}\int_{\text{\ensuremath{\mathbb{T}}}^{3}}\left<D_{t}^{\alpha}u,\phi\right> \;dxdt+\int_{0}^{T}\int_{\text{\ensuremath{\text{\ensuremath{\mathbb{T}}}^{3}}}}u\nabla p \cdot\nabla\phi \;dxdt\nonumber \\
+\varrho\int_{0}^{T}\int_{\text{\ensuremath{\text{\ensuremath{\mathbb{T}}}^{3}}}}\nabla u \cdot\nabla\phi \;dxdt & =0,\label{eq33ep}
\end{align}
\begin{align}
\int_{0}^{T}\int_{\mathbb{T}^{3}}\left<D_{t}^{\alpha}p,\psi\right> \;dxdt+\int_{0}^{T}\int_{\text{\ensuremath{\mathbb{T}}}^{3}}(-\Delta)^{s/2}p (-\Delta)^{s/2}\psi \;dxdt\nonumber \\
+\varepsilon\int_{0}^{T}\int_{\text{\ensuremath{\mathbb{T}}}^{3}}\nabla p \cdot\nabla\psi \;dxdt-\int_{0}^{T}\int_{\text{\ensuremath{\mathbb{T}}}^{3}}u^2\psi \;dxdt & =0,\label{eq34ep}
\end{align}
 for all $\phi\in L^{2}(0,T;\ W^{1,3}(\mathbb{T}^{3}))$ and $\psi\in L^{2}(0,T;\ H^{1}(\mathbb{T}^{3}))$. 

The next step is the limit $\tau \rightarrow 0$ in the energy estimate \eqref{eq32}. Taking $\liminf_{\tau\rightarrow0}$ on both sides,
by the lower weak semicontinuity of $L^p$ norm, we get
\begin{align}\label{eq35ep}
 & \int_{\mathbb{T}^{3}}\left(u^2+\frac{1}{2}|\nabla p|^2\right)\;dx
+\frac{t^{\alpha -1}}{\Gamma_{\alpha}} \int_0^t\int_{\text{\ensuremath{\text{\ensuremath{\mathbb{T}}}^{3}}}}\Big[\varrho|\nabla u|^{2}+\frac{1}{2}|(-\Delta)^{s/2} \nabla p|^{2}+\frac{\varepsilon}{2}(\Delta p)^{2}\Big]\;dxds  \\
\leq & \int_{\mathbb{T}^{3}}\left(u_{in}^2+\frac{1}{2}|\nabla p_{in}|^{2}\right)\;dx.\notag
\end{align}
Furthermore, take $\liminf_{\tau\rightarrow0}$ on both sides of \eqref{eq30} and \eqref{eq31}, 
everywhere convergence of $ {u}^{(\tau)}$ and $p^{(\tau)}$ 
yields 
\begin{equation}\label{eq36ep}
\int_{\text{\ensuremath{\mathbb{T}}}^{3}} {u}\;dx\leq \int_{\text{\ensuremath{\mathbb{T}}}^{3}}u_{in}\;dx,
\end{equation}
and
\begin{equation}\label{eq37ep}
\int_{\mathbb{T}^{3}}p\;dx\leq\int_{\mathbb{T}^{3}}p_{in}\;dx+ \frac{1}{\alpha \Gamma_{\alpha}} H(u_{in},p_{in})  T^{\alpha}.
\end{equation}

\section{Limit $\varepsilon \rightarrow0$}\label{sec:epsilonLimit}
This section is devoted to the limit $\varepsilon\rightarrow 0$. 
For convenience we recall the statement of a compactness result proven in \cite{li2018some}:
\begin{theorem}
\label{lem:main.cont}
Let $X$, $B$ and $Y$ be Banach spaces. $X\hookrightarrow B$ compactly and $B\hookrightarrow Y$ continuously. Let $1\leq r\leq \infty$, $0 < \alpha < 1$. Suppose $u\in L^1_{loc}(0,T;X)$ satisfies:
$$\norm{u}_{L^r(0,T;X)}+ \norm{D^{\alpha}_t u} _{L^r(0,T;Y)} \leq C_0.$$ Then $u$ is relatively compact in $ L^r(0,T;B)$.
\end{theorem}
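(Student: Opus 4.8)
The plan is to obtain Theorem \ref{lem:main.cont} as an essentially immediate consequence of the continuous time-translation bound in Lemma \ref{lem.1} together with Simon's compactness criterion (Lemma \ref{lem:simon}), entirely avoiding the interpolation argument that was needed for the discrete Theorem \ref{thm:mainII}. Throughout I would read the statement as a compactness assertion for the family $F$ of all functions $u\in L^1_{loc}(0,T;X)$ obeying the uniform bound $\norm{u}_{L^r(0,T;X)}+\norm{D_t^\alpha u}_{L^r(0,T;Y)}\le C_0$.

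First I would record the two inputs required by Simon's lemma. The bound gives directly $\norm{u}_{L^r(0,T;X)}\le C_0$ for every $u\in F$, so $F$ is bounded in $L^r(0,T;X)$. Moreover, since $X\hookrightarrow B\hookrightarrow Y$ continuously, both $u$ and $D_t^\alpha u$ belong to $L^r(0,T;Y)$, so Lemma \ref{lem.1} applies with $p=r$, target space $Y$, and $f=u$.

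Next I would invoke Lemma \ref{lem.1} to produce the uniform equicontinuity in time: for every $u\in F$ and every $h>0$,
$$\norm{u(\cdot + h)-u}_{L^r(0,T-h;Y)}\le \frac{2h^\alpha}{\Gamma_\alpha\alpha}\norm{D_t^\alpha u}_{L^r(0,T;Y)}\le \frac{2C_0}{\Gamma_\alpha\alpha}\,h^\alpha .$$
Because $\alpha>0$ and the right-hand side does not depend on $u$, it follows that $\sup_{u\in F}\norm{u(\cdot+h)-u}_{L^r(0,T-h;Y)}\to 0$ as $h\to 0$, which is precisely the time-translation hypothesis of Lemma \ref{lem:simon}. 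Combining the boundedness in $L^r(0,T;X)$, this equicontinuity, and the compact embedding $X\hookrightarrow B$ together with the continuous embedding $B\hookrightarrow Y$, Lemma \ref{lem:simon} yields that $F$ is relatively compact in $L^r(0,T;B)$ (and in $C([0,T];B)$ if $r=\infty$), which is the assertion.

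The hard part has in fact already been carried out: unlike in the discrete setting, where one must compare $D_\tau^\alpha$ with $D_t^\alpha$ and detour through linear interpolants to upgrade the weak $\tau^\alpha$-estimate (cf.\ the proof of Theorem \ref{thm:mainII}), here the Caputo derivative is genuinely continuous and Lemma \ref{lem.1} delivers the $h^\alpha$ modulus of continuity directly. The only point deserving care is checking the hypotheses of Lemma \ref{lem.1}, namely $u,\,D_t^\alpha u\in L^r(0,T;Y)$; this is automatic from the continuous chain $X\hookrightarrow B\hookrightarrow Y$, so I do not expect any real obstacle.
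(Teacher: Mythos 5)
Your argument is correct, but there is nothing internal to compare it against: the paper does not prove Theorem \ref{lem:main.cont} at all — it recalls it as a known result of Li and Liu \cite{li2018some} and uses it as a black box. What you have done is supply a self-contained proof from ingredients the paper already has: boundedness of the family in $L^r(0,T;X)$ is immediate from the hypothesis; Lemma \ref{lem.1}, applied with $p=r$ and $f=u$ (legitimate, since $u\in L^r(0,T;Y)$ via $X\hookrightarrow B\hookrightarrow Y$ and $D_t^\alpha u\in L^r(0,T;Y)$ by assumption), gives the uniform translation estimate $\norm{u(\cdot+h)-u}_{L^r(0,T-h;Y)}\leq 2C_0\,h^\alpha/(\Gamma_\alpha\alpha)$; and Simon's criterion (Lemma \ref{lem:simon}) then yields relative compactness in $L^r(0,T;B)$. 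This is in fact the same mechanism as the original Li--Liu proof, and it is the continuous counterpart of the paper's proof of Theorem \ref{thm:mainII}, minus the detour through linear interpolants and the comparison of $D_\tau^\alpha$ with $D_t^\alpha$ — which is exactly the simplification you identify. The one caveat worth making explicit: Lemma \ref{lem.1} rests on the fractional fundamental theorem of calculus \eqref{thm:funthm_calc_cont}, i.e.\ on the representation $u(t)=u(0)+\Gamma_\alpha^{-1}\int_0^t(t-s)^{\alpha-1}D_t^\alpha u(s)\,ds$, which presupposes that $u$ is regular enough for the Caputo derivative of Definition \ref{def:contCaputo} to exist (or that $D_t^\alpha u$ is understood in the generalized sense of \cite{li2018some}); for $u$ merely in $L^1_{loc}(0,T;X)$ this is a definitional point your proof should state, though it is the same implicit assumption the paper makes whenever it invokes Lemma \ref{lem.1}.
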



{Similar to \eqref{eqCaputoUBdd}, we can conclude from \eqref{eq33ep} that
\[
\left\Vert D_{t}^{\alpha}u^{(\varepsilon)}\right\Vert _{L^{2}(0,T;(W^{1,3}(\mathbb{T}^{3}))')}\leq C(T,\varrho).
\]
  Moreover, by \eqref{eq35ep} $u^{(\varepsilon)}$ is uniformly bounded in $L^{2}(0,T;H^{1}(\mathbb{T}^{3}))$.} 
Since $H^1(\mathbb{T}^{3})$ is compactly embedded in $L^{6-\delta}(\mathbb{T}^{3})$, \Cref{lem:main.cont} with $X = H^1(\mathbb{T}^3)$, $B = L^{6-\delta}(\mathbb{T}^3)$, $Y = (W^{1,3})'(\mathbb{T}^3)$ yields $$
 {u}^{(\varepsilon)}\rightarrow{u}\ in\ L^{2}(0,T; L^{6-\delta}(\mathbb{T}^{3})),$$ and $$u^{(\varepsilon)}\rightarrow\ u\ a.e.\ in\ \mathbb{T}^{3}\times[0,T].$$
Therefore, similarly as for \eqref{I1I2u^2}, 
we conclude
\[
\int_{0}^{T}\int_{\text{\ensuremath{\mathbb{T}}}^{3}}( {u}^{(\varepsilon)})^{2}\psi \;dxdt\rightarrow\int_{0}^{T}\int_{\text{\ensuremath{\mathbb{T}}}^{3}}{u}{}^{2}\psi \;dxdt\textrm{, as }\varepsilon\rightarrow 0.
\]

The second term in \eqref{eq33ep} can be handled in the exact same way as  \eqref{I1I2up}. Summarizing up, after a density argument,
\begin{align}
\int_{0}^{T}\int_{\text{\ensuremath{\mathbb{T}}}^{3}}\left<D_{t}^{\alpha}u,\phi\right> \;dxdt+\int_{0}^{T}\int_{\text{\ensuremath{\text{\ensuremath{\mathbb{T}}}^{3}}}}u\nabla p \cdot\nabla\phi \;dxdt\nonumber \\
+\varrho\int_{0}^{T}\int_{\text{\ensuremath{\text{\ensuremath{\mathbb{T}}}^{3}}}}\nabla u \cdot\nabla\phi \;dxdt & =0,\label{eq33}
\end{align}
\begin{align}
\int_{0}^{T}\int_{\mathbb{T}^{3}}\left<D_{t}^{\alpha}p,\psi\right> \;dxdt+\int_{0}^{T}\int_{\text{\ensuremath{\mathbb{T}}}^3}(-\Delta)^{s/2}p (-\Delta)^{s/2}\psi \;dxdt\nonumber \\
-\int_{0}^{T}\int_{\text{\ensuremath{\mathbb{T}}}^{3}}u^2\psi \;dxdt & =0,\label{eq34}
\end{align}
 for all $\phi\in L^{2}(0,T;\ W^{1,3}(\mathbb{T}^{3}))$ and $\psi\in L^{2}(0,T;\ H^{1}(\mathbb{T}^{3}))$. 

The next step is the limit $\varepsilon \rightarrow 0$ in \eqref{eq35ep}. By the lower weak semicontinuity we have
\begin{align}\label{eq35}
 & \int_{\mathbb{T}^{3}}\left(u^2+\frac{1}{2}|\nabla p|^2\right)\;dx
+\frac{t^{\alpha -1}}{\Gamma_{\alpha}}\Big[ \varrho\int_0^t\int_{\text{\ensuremath{\text{\ensuremath{\mathbb{T}}}^{3}}}}|\nabla u|^{2}\;dxds
+\frac{1}{2}\int_0^t\int_{\text{\ensuremath{\mathbb{T}}}^{3}}|(-\Delta)^{s/2}\nabla p|^{2}\;dxds\Big] \notag\\
\leq &\; \int_{\mathbb{T}^{3}}\left(u_{in}^2+\frac{1}{2}|\nabla p_{in}|^{2}\right)\;dx.
\end{align}
{Now we try to get the everywhere convergence of $p^{(\varepsilon)}$. Same as \eqref{eqCaputoPBound}, 
we can bound the first term of\textcolor{black}{{} {} }\eqref{eq34ep} as
\begin{align*}
\left\Vert D_{t}^{\alpha}p^{(\varepsilon)}\right\Vert _{L^{2}(0,T; H^{-1}(\mathbb{T}^{3}))}
\leq &\; \left\Vert (-\Delta)^{s/2}p^{(\tau)}\right\Vert _{L^{2}(0,T;L^{2}(\mathbb{T}^{3}))}\left\Vert (-\Delta)^{s/2}\psi\right\Vert _{L^{2}(0,T;L^{2}(\mathbb{T}^{3}))}\\
 & +\left\Vert \nabla p^{(\tau)}\right\Vert _{L^{2}(0,T;L^{4}(\mathbb{T}^{3}))}\left\Vert \psi\right\Vert _{L^{2}(0,T;L^{4/3}(\mathbb{T}^{3}))}\\
 & +\left\Vert ( {u}^{(\tau)})^{2}\right\Vert _{L^{2}(0,T;L^{3/2}(\mathbb{T}^{3}))}\left\Vert \psi\right\Vert _{L^{2}(0,T;L^{3}(\mathbb{T}^{3}))}\\
 \leq &\;  C(T, H_{in}) \left\Vert \psi\right\Vert _{L^{2}(0,T;H^{1}(\mathbb{T}^{3}))}. 
\end{align*}
\Cref{lem:main.cont} with $X = H^1(\mathbb{T}^3)$, $B = L^2(\mathbb{T}^3)$, $Y = H^{-1}(\mathbb{T}^3)$ yields
\[
p^{(\varepsilon)}\rightarrow p \ in \  L^{2}(0,T;L^{2}(\mathbb{T}^{3})),
\]}
and \begin{align*}p^{(\varepsilon)}\rightarrow\ p\ a.e.\ in\ \mathbb{T}^{3}\times[0,T].\label{eqPAEConv} \end{align*} 
Now take $\liminf_{\varepsilon\rightarrow0}$ on \eqref{eq36ep} and \eqref{eq37ep},  almost everywhere convergence of $ {u}^{(\varepsilon)}$ and $p^{(\varepsilon)}$ yields 
\begin{equation}\label{eq36}
\int_{\text{\ensuremath{\mathbb{T}}}^{3}} {u}\;dx \leq \int_{\text{\ensuremath{\mathbb{T}}}^{3}}u_{in}\;dx,
\end{equation}
and
\begin{equation}\label{eq37}
\int_{\mathbb{T}^{3}}p\;dx\leq\int_{\mathbb{T}^{3}}p_{in}\;dx + \frac{1}{\alpha \Gamma_{\alpha}} H(u_{in},p_{in})  T^{\alpha}.
\end{equation}
\section{Limit $\varrho\rightarrow0$}\label{sec:rhoLimit}
This section is devoted to the limit $\varrho\rightarrow 0$. Define $$\Psi(u) : = \int_{0}^{T}\int_{\text{\ensuremath{\mathbb{T}}}^{3}}{(u^{})}^{2}\psi \;dxdt,$$
with $\psi\in L^2(0,T; H^1\cap L^\infty(\mathbb{T}^3))$. 
We have that 
\begin{align*}
\left| \Psi(u_1)-\Psi(u_2)\right| = & \int_{0}^{T}\int_{\text{\ensuremath{\mathbb{T}}}^{3}}\left| u_1^2-u_2^2\right|\psi \ dxdt\\
\leq & \norm{u_1-u_2}_{L^2(0,T; L^2(\mathbb{T}^3))}\left[\int_{0}^{T}\int_{\text{\ensuremath{\mathbb{T}}}^{3}}\psi^2(u_1+u_2)^2 \ dxdt\right]^\frac{1}{2},\\
\leq & \norm{u_1-u_2}_{L^2(0,T; L^2(\mathbb{T}^3))}  \norm{\psi}^2_{L^2(0,T; L^\infty(\mathbb{T}^3))}\norm{u_1+u_2}^2_{L^\infty(0,T; L^2(\mathbb{T}^3))}.
\end{align*}
Therefore $\Psi (\cdot)$ is continuous for the strong topology $\norm{\cdot  }_{L^2(0,T; L^2(\mathbb{T}^3))}$. Since $\Psi$ is convex, $u^{(\varrho)} \rightharpoonup  u$ in $L^2(0,T,L^2(\mathbb{T}^3))$ and $u^{(\varrho)}$ is uniformly bounded in $L^\infty(0,T,L^2(\mathbb{T}^3) )$, we have that (Corollary III.8 \cite{brezis2010functional})
$$
 \liminf_{\varrho\rightarrow 0} \Psi(u^{(\varrho)} ) \ge  \Psi(u).
$$
In other words, 
$$\int_{0}^{T}\int_{\text{\ensuremath{\mathbb{T}}}^{3}}u^{2}\psi \;dxdt \leq \liminf_{\varrho\rightarrow 0} \int_{0}^{T}\int_{\text{\ensuremath{\mathbb{T}}}^{3}}{(u^{(\varrho)})}^{2}\psi \;dxdt.$$

{{Next, we take a look at the second term:
$$\int_{0}^{T}\int_{\text{\ensuremath{\text{\ensuremath{\mathbb{T}}}^{3}}}}{u^{(\varrho)}}\nabla p^{(\varrho)} \cdot\nabla\phi \;dxdt.$$
With $\phi \in L^2(0,T,W^{1,\infty}(\mathbb{T}^{3}))$, consider
\begin{align*}
\int\int[ {u}^{(\varrho)}\nabla p^{(\varrho)}-{u}\nabla p]\cdot\nabla \phi\;dxdt & =\int\int[ {u}^{(\varrho)}\nabla p^{(\varrho)}- {u}^{(\varrho)}\nabla p+ {u}^{(\varrho)}\nabla p-{u}\nabla p]\cdot\nabla \phi\;dxdt\\
 & =\int\int {u}^{(\varrho)}\nabla \phi\cdot(\nabla p^{(\varrho)}-\nabla p)\;dxdt+\int\int\nabla p\cdot\nabla \phi( {u}^{(\varrho)}-{u})\;dxdt\notag\\
 & =I_{1}+I_{2}.\notag
\end{align*}
We can have $I_2 \rightarrow 0$ since $u^{(\varrho)}\rightharpoonup^* u$ in $L^{\infty}(0,T;L^{2}(\mathbb{T}^{3}))$ and  $\nabla p\cdot \nabla \phi$ in $L^{1}(0,T;L^{2}(\mathbb{T}^{3}))$. 

To handle $I_1$, we need to show strong convergence for $\nabla p^{(\varrho)}$. For that we first bound $D_{t}^{\alpha}p^{(\varrho)}$ as follows: 
\begin{align*}
\left\vert\int_{0}^{T}\int_{\text{\ensuremath{\mathbb{T}}}^{3}}\left<D_{t}^{\alpha}p^{(\varrho)},\psi\right> \;dxdt\right\vert
\leq & \left\Vert (-\Delta)^{s/2}p^{(\varrho)}\right\Vert _{L^{2}(0,T;L^{2}(\mathbb{T}^{3}))}\left\Vert (-\Delta)^{s/2}\psi\right\Vert _{L^{2}(0,T;L^{2}(\mathbb{T}^{3}))}\\
 & +\left\Vert ( {u}^{(\varrho)})^{2}\right\Vert _{L^{\infty}(0,T;L^{1}(\mathbb{T}^{3}))}\left\Vert \psi\right\Vert _{L^{2}(0,T;L^{\infty}(\mathbb{T}^{3}))}\\
\leq & C(T)\left\Vert \psi\right\Vert _{L^{2}(0,T;L^\infty \cap H^1(\mathbb{T}^{3}))},
\end{align*}
which implies
\[
\left\Vert D_{t}^{\alpha}p^{(\varrho)}\right\Vert _{L^{2}(0,T; (L^\infty \cap H^1)'(\mathbb{T}^3)))}\leq C(T).
\]
Then \Cref{lem:main.cont} with $X = H^{s+1}(\mathbb{T}^3)$, $B = H^1(\mathbb{T}^3)$, $Y = (L^\infty \cap H^1)'(\mathbb{T}^3)$ yields $p^{(\varrho)}\rightarrow p$ in $L^{2}(0,T;H^{1}(\mathbb{T}^{3}))$. 
We conclude that $I_1 \rightarrow 0$, since
\begin{align}
I_1\leq\norm{u^{(\varrho)}}_{L^\infty(0,T;L^2(\mathbb{T}^3))}\norm{\nabla \phi}_{L^2(0,T;L^\infty(\mathbb{T}^3))}\norm{\nabla p^{(\varrho)}-\nabla p}_{L^2(0,T;L^2(\mathbb{T}^3))}\notag.
\end{align}

The last term 
$$\varrho\int_{0}^{T}\int_{\text{\ensuremath{\text{\ensuremath{\mathbb{T}}}^{3}}}}\nabla{u^{(\varrho)}} \cdot\nabla\phi \;dxdt\rightarrow 0,$$
since $ \sqrt{\varrho}\nabla{u^{(\varrho)}}$
is uniformly bounded in $L^{2}(0,T;L^{2}(\mathbb{T}^{3}))$. 
}}

Furthermore, we consider the energy estimates \eqref{eq35}. By taking $\liminf_{\varrho\rightarrow 0}$ on both sides 
we have
\begin{equation*}
 \int_{\mathbb{T}^{3}}\left(u^2+\frac{1}{2}|\nabla p|^2\right)\;dx\notag
+\frac{t^{\alpha -1}}{\Gamma_{\alpha}}
\int_0^t\int_{\text{\ensuremath{\mathbb{T}}}^{3}}|(-\Delta)^{s/2}\nabla p(s,x)|^{2}\;dxds
\leq \int_{\mathbb{T}^{3}}\left(u_{in}^2+\frac{1}{2}|\nabla p_{in}|^{2}\right)\;dx.
\end{equation*}
Follow the same logic, with Fatou's lemma, we can pass the limits on \eqref{eq36} and \eqref{eq37}:
\begin{equation*}
\int_{\text{\ensuremath{\mathbb{T}}}^{3}}{u^{}}\;dx \leq \int_{\text{\ensuremath{\mathbb{T}}}^{3}}u_{in}\;dx,
\end{equation*}
\begin{equation*}
\int_{\mathbb{T}^{3}}p^{}\;dx\leq\int_{\mathbb{T}^{3}}p_{in}\;dx + \frac{1}{\alpha \Gamma_{\alpha}} H(u_{in},p_{in})  T^{\alpha}.
\end{equation*}
This concludes the proof of Theorem \ref{thm:mainthm}.

\section{Appendix}

{{In the following lemma we show a formal $L^{3}(0,T;L^3(\mathbb{T}^3))$-estimate for $u$, provided $\frac{1}{2}<s\le1$. We will not use this estimate in this manuscript. We add it here for completeness, and because we believe it can  be useful in the future to show compactness for $u$. 

\begin{lemma} \label{lemma:L3L3}
If  $\frac{1}{2}<s\le1$ the function $u$, solution to (\ref{1L}), is bounded in $L^{3}(0,T;L^3(\mathbb{T}^3))$ as 
$$
\|u\|_{L^3(0,T,L^3(\mathbb{T}^{3}{))}  } \le C(T, H(u_{in},p_{in})).
$$
\end{lemma}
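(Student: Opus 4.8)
The starting point I would use is the pressure equation itself, read as the pointwise identity $u^2 = D^\alpha_t p + (-\Delta)^s p$. Multiplying by $u\ge 0$ and integrating over $\mathbb{T}^3\times(0,T)$ gives the basic decomposition
\[
\int_0^T\int_{\mathbb{T}^3} u^3\,dx\,dt = \underbrace{\int_0^T\int_{\mathbb{T}^3} u\,D^\alpha_t p\,dx\,dt}_{=:I} \;+\; \underbrace{\int_0^T\int_{\mathbb{T}^3} u\,(-\Delta)^s p\,dx\,dt}_{=:II}.
\]
All the quantitative input about $p$ would come from the energy inequality \eqref{eq_energy_ine}: evaluating it at $t=T$ and using $(T-\sigma)^{\alpha-1}\ge T^{\alpha-1}$ yields $u\in L^\infty(0,T;L^2)$, $\nabla p\in L^\infty(0,T;L^2)$ and $p\in L^2(0,T;\dot H^{s+1})$, all with constants of the form $C(T,H(u_{in},p_{in}))$. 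This is where the restriction $s>\tfrac12$ enters, through the Sobolev embeddings $\dot H^{s+1}(\mathbb{T}^3)\hookrightarrow L^\infty$ and $\dot H^{s}(\mathbb{T}^3)\hookrightarrow L^{6/(3-2s)}$ with exponent $6/(3-2s)>3$.

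For the nonlocal elliptic term $II$ I would move no derivative onto $u$. Since $p\in L^2(0,T;\dot H^{s+1})$ and $2s\le s+1$, one has $(-\Delta)^s p\in L^2(0,T;\dot H^{1-s})\hookrightarrow L^2(0,T;L^{6/(1+2s)})\subset L^2(0,T;L^{3/2})$. A Hölder estimate in space followed by Young's inequality then gives $|II|\le \delta\,\|u\|_{L^3(0,T;L^3)}^3 + C_\delta\,\|(-\Delta)^s p\|_{L^{3/2}(0,T;L^{3/2})}^{3/2}$, the last norm being finite by the energy bound; the term $\delta\|u\|^3$ is reserved for absorption into the left-hand side.

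The delicate term is $I$. Here I would transfer the Caputo derivative off $p$ by the integration-by-parts identity \eqref{byparts.allen} (applied in $t$, pointwise in $x$, with $f=p$, $\phi=u$) and then invoke the density equation \eqref{1L}, using $\int_{\mathbb{T}^3}(D^\alpha_t u)\,p\,dx = \int_{\mathbb{T}^3}\mathrm{div}(u\nabla p)\,p\,dx = -\int_{\mathbb{T}^3} u|\nabla p|^2\,dx$. This produces
\[
I = \int_0^T\int_{\mathbb{T}^3} u|\nabla p|^2\,dx\,dt \;+\; (\text{nonnegative boundary term}) \;+\; \frac{\alpha}{\Gamma_{1-\alpha}}\int_0^T\int_0^t\int_{\mathbb{T}^3}\frac{(u(t)-u(\sigma))(p(t)-p(\sigma))}{(t-\sigma)^{1+\alpha}}\,dx\,d\sigma\,dt \;-\; (\text{initial-data term}).
\]
The parabolic-looking piece $\int\int u|\nabla p|^2$ I would control by Hölder and Young, integrating $\int u|\nabla p|^2\le \delta\|u\|_{L^3_x}^3 + C_\delta\|\nabla p\|_{L^3_x}^3$ in time and using that $s>\tfrac12$ forces $\nabla p\in L^2(0,T;\dot H^s)\hookrightarrow L^2(0,T;L^{>3})$ together with $\nabla p\in L^\infty(0,T;L^2)$; the boundary term pairs $u,p$ in $L^\infty_tL^2_x$, and the initial-data term is handled by $u_{in},p_{in}\in L^2$. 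Collecting the absorbable $\delta\|u\|_{L^3(0,T;L^3)}^3$ contributions from $I$ and $II$ on the left and choosing $\delta$ small would then close the estimate with a constant $C(T,H(u_{in},p_{in}))$.

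The main obstacle is the symmetric time-fractional cross term above: unlike the single-function inequality $2g\,D^\alpha_t g\ge D^\alpha_t g^2$ used to derive \eqref{eq_energy_ine}, this bilinear form in $u$ and $p$ has no sign and must be dominated by a product of Gagliardo time-seminorms of $u$ and $p$, which one has to tie back to the energy dissipation and to the dual-space bounds on $D^\alpha_t u$, $D^\alpha_t p$ established for Theorem \ref{thm:mainthm}. Equivalently, one could try to bound $I$ by duality, $I\le \|D^\alpha_t p\|_{L^2(0,T;(L^\infty\cap H^1)')}\,\|u\|_{L^2(0,T;L^\infty\cap H^1)}$, but this needs integrability of $u$ beyond $L^\infty_tL^2_x$. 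Either route requires using $u$ (resp.\ $p$) as a test function in the $p$- (resp.\ $u$-) equation at a regularity the weak solution is not known to possess, which is precisely why the estimate is only formal; the spatial part of the argument is robust for all $s>\tfrac12$ thanks to $\dot H^{s}\hookrightarrow L^{>3}$, and it is the fractional-in-time coupling that is the genuine difficulty.
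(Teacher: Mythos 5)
Your setup is exactly the paper's: testing the pressure equation with $u$ and the density equation with $p$, then combining them through the integration-by-parts identity \eqref{byparts.allen}, yields precisely the paper's decomposition of $\int_0^T\int_{\mathbb{T}^3}u^3\,dx\,dt$ into the drift term $\int\int u|\nabla p|^2$, a nonlocal spatial term, the boundary-in-time term, the mixed Gagliardo term, and the initial-data term. Your handling of the drift term, the boundary term and the initial-data term coincides with the paper's, and your treatment of the nonlocal term $II$ (keeping all derivatives on $p$, using $(-\Delta)^s p\in L^2(0,T;\dot H^{1-s})\hookrightarrow L^2(0,T;L^{6/(1+2s)})\subset L^{3/2}(0,T;L^{3/2})$ and absorbing $\delta\|u\|_{L^3}^3$) is valid and in fact cleaner than the paper's one-line appeal to integration by parts.

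The genuine gap is the term you yourself flag as the obstacle,
\begin{equation*}
I_4=\frac{\alpha}{\Gamma_{1-\alpha}}\int_0^T\!\!\int_0^t\!\!\int_{\mathbb{T}^3}\frac{(u(t)-u(\sigma))(p(t)-p(\sigma))}{(t-\sigma)^{1+\alpha}}\,dx\,d\sigma\,dt ,
\end{equation*}
which you leave unproved; neither of your two suggested routes closes it, and the paper's resolution uses an idea absent from your proposal. The paper first applies Young's inequality to split $I_4$ into the two \emph{diagonal} seminorms $\frac{\alpha}{\Gamma_{1-\alpha}}\int\int\int\frac{(u(t)-u(\sigma))^2}{(t-\sigma)^{1+\alpha}}$ and its analogue for $p$; then each diagonal seminorm is converted \emph{back}, via the same identity \eqref{byparts.allen} applied with $\phi=f$, into $2\int f\,D_t^\alpha f\,dt$ plus terms controlled by $\|u\|_{L^\infty_tL^2_x}$ and $\|p\|_{L^\infty_tL^2_x}$; finally — and this is the key move you are missing — the equations \eqref{1L} are substituted back in. For $u$: $\int_{\mathbb{T}^3} u\,D_t^\alpha u\,dx=-\tfrac12\int_{\mathbb{T}^3}\nabla(u^2)\cdot\nabla p\,dx$, and replacing $u^2$ by the pressure equation gives $-\tfrac12\int(D_t^\alpha\nabla p)\cdot\nabla p-\tfrac12\int|(-\Delta)^{s/2}\nabla p|^2\le-\tfrac14\int D_t^\alpha|\nabla p|^2$, which is bounded by initial data using \eqref{byparts.almeida} with $\phi\equiv1$. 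For $p$: $\int p\,D_t^\alpha p=\int u^2p-\int|(-\Delta)^{s/2}p|^2\le\tfrac12\int u^3+C\int p^3$, where the cubic term in $u$ is absorbed and $p\in L^3(0,T;L^3)$ follows from interpolating $L^\infty_tL^2_x$ with $L^2_tL^6_x$. Your alternatives do not substitute for this: the duality route needs $u\in L^2(0,T;L^\infty\cap H^1)$, which you correctly note is unavailable; and dominating $I_4$ by a product of Gagliardo seminorms ``tied back to the energy dissipation'' stalls, because the energy inequality \eqref{eq_energy_ine} controls no time-Gagliardo seminorm of $u$ or $p$ — it is only the re-insertion of the PDEs that makes these seminorms estimable. (A separate, shared imperfection rather than a gap relative to the paper: both you and the paper bound $\|\nabla p\|_{L^3(0,T;L^3)}$ by interpolating $L^\infty_tL^2_x$ with $L^2_tL^{6/(3-2s)}_x$, but that interpolation yields $L^3_tL^r_x$ with $\frac1r=\frac{9-4s}{18}$, so it requires $s\ge\frac34$, not merely $s>\frac12$.)
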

\begin{proof}
We test the equations in (\ref{1L}) with $p $ and $u$ respectively. We get
\begin{align}
\int_{0}^{T}\int_{\text{\ensuremath{\mathbb{T}}}^{3}}\left<D_{t}^{\alpha}{u },p \right> \;dxdt+\int_{0}^{T}\int_{\text{\ensuremath{\text{\ensuremath{\mathbb{T}}}^{3}}}}{u }\left|\nabla p \right|^2 \;dxdt =0
\label{l3l3.eq1},
\end{align}
and
\begin{align}
\int_{0}^{T}\int_{\mathbb{T}^{3}}\left<D_{t}^{\alpha}p ,u  \right>\;dxdt+\int_{0}^{T}\int_{\text{\ensuremath{\mathbb{T}}}^{3}}(-\Delta)^{s/2}p  (-\Delta)^{s/2}u  \;dxdt =\int_{0}^{T}\int_{\text{\ensuremath{\mathbb{T}}}^{3}}{(u )}^{3} \;dxdt. \label{l3l3.eq2}
\end{align}
Integration by parts using \eqref{byparts.allen} yields
\begin{align}
\int_0^T\left<D^{\alpha}_tp (t),  u (t)\right>\ dt +\int_0^T&\left<D^{\alpha}_tu (t)  ,p (t)\right> \ dt = \frac{1}{\Gamma_{1-\alpha}}\int_0^T u (t)p (t)\left[ \frac{1}{(T-t)^{\alpha}}+\frac{1}{t^{\alpha}}\right]\ dt\label{l3l3.eq4}\\
&+ \frac{\alpha}{\Gamma_{1-\alpha}} \int_0^T\int_0^t \frac{(u (t)-u (s))(p (t)-p (s))}{(t-s)^{1+\alpha}}\ dsdt\nonumber\\
&- \frac{1}{\Gamma_{1-\alpha}}\int_0^T \frac{u (t)p_{in}+p (t)u_{in}}{t^{\alpha}}\ dt.\nonumber
\end{align}
Adding \eqref{l3l3.eq1} to \eqref{l3l3.eq2} and using \eqref{l3l3.eq4}, we get
\begin{align}
\int_{0}^{T}\int_{\text{\ensuremath{\mathbb{T}}}^{3}}{(u )}^{3}\;dxdt =& \int_{0}^{T}\int_{\text{\ensuremath{\text{\ensuremath{\mathbb{T}}}^{3}}}}{u }\left|\nabla p \right|^2 \;dxdt \label{l3l3.eq5}\\
&+\int_{0}^{T}\int_{\text{\ensuremath{\mathbb{T}}}^{3}}(-\Delta)^{s/2}p  (-\Delta)^{s/2}u  \;dxdt\nonumber\\
&+ \frac{1}{\Gamma_{1-\alpha}}\int_0^T\int_{\mathbb{T}^3} u (t)p (t)\left[ \frac{1}{(T-t)^{\alpha}}+\frac{1}{t^{\alpha}}\right]\ dxdt\nonumber\\
&+ \frac{\alpha}{\Gamma_{1-\alpha}} \int_0^T\int_0^t\int_{\mathbb{T}^3} \frac{(u (t)-u (s))(p (t)-p (s))}{(t-s)^{1+\alpha}}\ dxdsdt\nonumber\\
&- \frac{1}{\Gamma_{1-\alpha}}\int_0^T \int_{\mathbb{T}^3} \frac{u (t)p_{in}+p (t)u_{in}}{t^{\alpha}}\ dxdt\nonumber\\
= & \  I_1 +I_2+I_3+I_4+I_5.\nonumber
\end{align}
We use H\"{o}lder's inequality to bound $I_1$:
\begin{align*}
I_1 = \int_0^T\int_{\mathbb{T}^3} u  \left|\nabla p \right|^2\ dxdt 
\leq & \left[\int_0^T\int_{\mathbb{T}^3} (u )^3\ dxdt\right]^\frac{1}{3} \left[\int_0^T\int_{\mathbb{T}^3} \left[\left|\nabla p \right|^2\right]^\frac{3}{2}\ dxdt\right]^\frac{2}{3}\\
\leq & \frac{1}{4} \int_0^T\int_{\mathbb{T}^3} (u )^3\ dxdt + 2\norm{\nabla p }_{L^{3}(0,T;L^3(\mathbb{T}^3))}.
\end{align*}
Since $s>\frac{1}{2}$, the $\norm{\nabla p }_{L^{3}(0,T;L^3(\mathbb{T}^3))}$ term can be bounded via interpolation of $\nabla p \in L^\infty(0,T; L^{2}(\mathbb{T}^3))$ with $\nabla p \in L^2(0,T; L^{\frac{6}{3-2s}}(\mathbb{T}^3))$. We bound $I_2$  using \eqref{eq_energy_ine} after integrating by parts.
Next we analyze $I_3$:
\begin{align*}
I_3 &= \frac{1}{\Gamma_{1-\alpha}}\int_0^T\int_{\mathbb{T}^3} u (t)p (t)\left[ \frac{1}{(T-t)^{\alpha}}+\frac{1}{t^{\alpha}}\right]\ dxdt\\ 
&\leq \frac{2}{\Gamma_{2-\alpha}}  T^{1-\alpha}  \norm{u }_{L^{\infty}(0,T;L^2(\mathbb{T}^3))}\norm{p }_{L^{\infty}(0,T;L^2(\mathbb{T}^3))}\nonumber \\
&\leq C(T,H(u_{in}, p_{in})),
\end{align*}
thanks to \eqref{eq_energy_ine}.
Now take a close look at $I_4$:
\begin{align*}
I_4 = \ &\frac{\alpha}{\Gamma_{1-\alpha}}\int_0^T\int_0^t\int_{\mathbb{T}^3} \frac{(u (t)-u (s))(p (t)-p (s))}{(t-s)^{1+\alpha}}\ dxdsdt \\
\leq & \  \frac{\alpha}{\Gamma_{1-\alpha}} \int_0^T\int_0^t\int_{\mathbb{T}^3} \frac{(u (t)-u (s))^2}{(t-s)^{1+\alpha}}\ dxdsdt + \frac{\alpha}{\Gamma_{1-\alpha}} \int_0^T\int_0^t\int_{\mathbb{T}^3} \frac{(p (t)-p (s))^2}{(t-s)^{1+\alpha}}\ dxdsdt.
\end{align*}
Integration by parts formula \eqref{byparts.allen} gives
\begin{align*}
 \  \frac{\alpha}{\Gamma_{1-\alpha}} \int_0^T\int_0^t\int_{\mathbb{T}^3} \frac{(u (t)-u (s))^2}{(t-s)^{1+\alpha}}\ dxdsdt =& \ 2\alpha \int_0^T \int_{\mathbb{T}^3} u (t)D^\alpha_t u (t)\ dxdt + \frac{\alpha}{\Gamma_{1-\alpha}}\int_0^T \int_{\mathbb{T}^3} \frac{2u (t)u_{in}}{t^\alpha}\ dxdt\\
 &-\frac{\alpha}{\Gamma_{1-\alpha}}\int_0^T \int_{\mathbb{T}^3} (u (t))^2\left[\frac{1}{(T-t)^\alpha}+\frac{1}{t^\alpha}\right]\ dxdt \\
 =& \  2\alpha \int_0^T \int_{\mathbb{T}^3} u (t)\textrm{div} \ ( u (t)   \nabla p (t))\ dxdt\\
& + \frac{\alpha}{\Gamma_{1-\alpha}}\int_0^T \int_{\mathbb{T}^3} \frac{2u (t)u_{in}}{t^\alpha}\ dxdt\\
 &-\frac{\alpha}{\Gamma_{1-\alpha}}\int_0^T \int_{\mathbb{T}^3} (u (t))^2\left[\frac{1}{(T-t)^\alpha}+\frac{1}{t^\alpha}\right]\ dxdt \\
 \leq& - 2\alpha \int_0^T \int_{\mathbb{T}^3} \nabla u (t) \cdot(u (t)   \nabla p (t))\ dxdt\\
 & + \frac{\alpha}{\Gamma_{1-\alpha}}\int_0^T \int_{\mathbb{T}^3} \frac{2u (t)u_{in}}{t^\alpha}\ dxdt\\
 =& \ A_1 +A_2.
\end{align*}
For $A_2$ we have:
$$A_2 \leq \frac{ T^{1-\alpha}}{\Gamma_{2-\alpha}} \norm{ u (t)}_{L^{\infty }(0,T;L^2(\mathbb{T}^3))}\norm{ u_{in}}_{L^{\infty }(0,T;L^2(\mathbb{T}^3))}\leq C(T,H(u_{in},p
_{in})).$$
In $A_1$, integration by parts yields
\begin{align*}
A_1 
= & -\int_{0}^{T}\int_{\mathbb{T}^{3}}\left(D_{t}^{\alpha}\nabla p \right)\cdot\nabla p  \;dxdt-\int_{0}^{T}\int_{\text{\ensuremath{\mathbb{T}}}^{3}}\left|(-\Delta)^{s/2}\nabla p \right|^2 \;dxdt\\
\leq & -\frac{1}{2}\int_{0}^{T}\int_{\mathbb{T}^{3}}D_{t}^{\alpha}(\nabla p )^2 \;dxdt.
\end{align*}
Next we use formula \eqref{byparts.almeida} with $\phi =1$ and get:
\begin{align*}
A_1
\leq \ & \frac{1}{2\Gamma_{1-\alpha}}\int_{\mathbb{T}^{3}} \int_0^T
\left|\nabla p \right|^2(t) \frac{d}{dt}\left(\int_t^T\frac{1}{(s-t)^{\alpha}}\ ds\right)dtdx +\frac{1}{2\Gamma_{1-\alpha}}\int_{\mathbb{T}^{3}}\nabla p_{in}^2\int_0^T\frac{1}{s^{\alpha}}\ dsdx\\
=& -\frac{1}{2\Gamma_{1-\alpha}}\int_0^T\int_{\mathbb{T}^{3}}\frac{\left|\nabla p \right|^2(t)}{(T-t)^{\alpha}}\ dxdt+ \frac{T^{1-\alpha}}{2\Gamma_{2-\alpha}} \norm{\nabla p_{in}}^2_{L^2(\mathbb{T}^{3})} \\
\leq & \ C(T,H(u_{in},p_{in})).
\end{align*}
Summarizing
\begin{align*}
  \frac{\alpha}{\Gamma_{1-\alpha}} \int_0^T\int_0^t\int_{\mathbb{T}^3} \frac{(u (t)-u (s))^2}{(t-s)^{1+\alpha}}\ dxdsdt
 \leq &  C(T,H(u_{in},p_{in})).
\end{align*}
Now we look at the second term of $I_4$:
\begin{align*}
&\frac{\alpha}{\Gamma_{1-\alpha}} \int_0^T\int_0^t\int_{\mathbb{T}^3} \frac{(p (t)-p (s))^2}{(t-s)^{1+\alpha}}\ dxdsdt \\
= &\; 2 \int_0^T \int_{\mathbb{T}^3} p (t)D^\alpha_t p (t)\ dxdt + \frac{1}{\Gamma_{1-\alpha}}\int_0^T \int_{\mathbb{T}^3} \frac{2p (t)p_{in}}{t^\alpha}\ dxdt-\frac{1}{\Gamma_{1-\alpha}}\int_0^T \int_{\mathbb{T}^3} (p (t))^2\left[\frac{1}{(T-t)^\alpha}+\frac{1}{t^\alpha}\right]\ dxdt \\
\leq & -\int_0^T\int_{\mathbb{T}^3} [(-\Delta)^\frac{s}{2}p ]^2 \ dxdt + \int_0^T\int_{\mathbb{T}^3} (u )^2 p \ dxdt + \frac{1}{\Gamma_{1-\alpha}}\int_0^T \int_{\mathbb{T}^3} \frac{2p (t)p_{in}}{t^\alpha}\ dxdt\\
\leq &  \int_0^T\int_{\mathbb{T}^3} (u )^2 p \ dxdt+ \frac{1}{\Gamma_{1-\alpha}}\int_0^T \int_{\mathbb{T}^3} \frac{2p (t)p_{in}}{t^\alpha}\ dxdt\\
= & B_1+B_2.
\end{align*}
The bound for $B_2$ follows from
$$B_2 \leq \frac{ T^{1-\alpha}}{\Gamma_{2-\alpha}} \norm{ p (t)}_{L^{\infty }(0,T;L^2(\mathbb{T}^3))}\norm{ p_{in}}_{L^{\infty }(0,T;L^2(\mathbb{T}^3))}\leq C(T,H(u_{in},p
_{in})).$$
For  $B_1$ we have:
\begin{align*}
B_1
\leq &\; \frac{1}{2} \int_0^T\int_{\mathbb{T}^3} (u )^3\ dxdt + 4\int_0^T\int_{\mathbb{T}^3} (p )^3\ dxdt\\
\leq &\; \frac{1}{2}  \int_0^T\int_{\mathbb{T}^3} (u )^3\ dxdt + 4\norm{ p }_{L^{3}(0,T;L^3(\mathbb{T}^3))}.
\end{align*}
The second term is bounded in accordance to the interpolation of $ p \in L^\infty(0,T; L^{2}(\mathbb{T}^3))\cap L^2(0,T; L^6(\mathbb{T}^3))$. Summarizing we have:
$$\frac{1}{4} \int_0^T\int_{\mathbb{T}^3} (u )^3\ dxdt\leq C(T,H(u_{in},p_{in})).$$
\end{proof}
}}

\end{document}